\newcommand{\HH}{\mathbb{H}}
\newcommand{\NN}{\mathbb{N}}
\newcommand{\RR}{\mathbb{R}}
\newcommand{\ZZ}{\mathbb{Z}}
\newcommand{\cD}{\mathcal{D}}
\newcommand{\cL}{\mathcal{L}}
\newcommand{\cN}{\mathcal{N}}
\newcommand{\cO}{\mathcal{O}}
\newcommand{\cU}{\mathcal{U}}
\newcommand{\fD}{\mathsf{D}}
\newcommand{\fM}{\mathsf{M}}
\newcommand{\fN}{\mathsf{N}}
\newcommand{\RP}{\operatorname{\RR P}}
\newcommand{\Diff}{\operatorname{Diff}}
\newcommand{\supp}{\operatorname{supp}}
\newcommand{\tDiff}{\widetilde{\Diff}}
\newcommand{\trot}{\widetilde{rot}}
\newcommand{\tgamma}{\widetilde{\gamma}}
\newcommand{\id}{\operatorname{Id}}
\newcommand{\closure}[1]{\overline{#1}}
\newcommand{\interior}{\operatorname{Int}}
\newcommand{\im}{\operatorname{Im}}
\newcommand{\near}{\operatorname{near}}
\newcommand{\eu}{\mathrm{eu}}
\newcommand{\dv}{\mathrm{div}}
\newcommand{\G}{G}
\newcommand{\Gn}{G_{\partial}}
\newcommand{\Gr}{G_{\mathrm{rel}}}
\newcommand{\Flux}{\operatorname{Flux}}
\newcommand{\Cal}{\operatorname{Cal}}
\newcommand{\Fluxn}{\Flux_{\partial}}
\newcommand{\Fluxnl}[1][\lambda]{\Flux_{\partial,#1}}
\title{The Euler Class and Flux Homomorphisms under Non-Orientability}
\author[K. Kim]{KyeongRo Kim}
\address{\hskip-\parindent
 School of Mathematics, Korea Institute for Advanced Study (KIAS), Seoul, 02455, Korea}
\email{kyeongrokim14@gmail.com}
\author{Shuhei Maruyama}
\address{\hskip-\parindent
School of Mathematics and Physics, College of Science and Engineering\\
Kanazawa University\\
Kakuma-machi, Kanazawa, Ishikawa, 920-1192, Japan}
\email{smaruyama@se.kanazawa-u.ac.jp}
\date{\today}
\begin{document}
\subjclass{Primary: 57S05, 20F65, 57K20; Secondary: }
\keywords{Flux homomorphism, Euler class, Calabi invariant, Transgression, Action on the circle, Area-preserving diffeomorphism, Cell division trick}
\maketitle
\begin{abstract}
    For an orientable surface with an area form, there are two invariants of area-preserving dynamics, the flux homomorphism and the Calabi invariant.
    Tsuboi found a remarkable connection between the Calabi invariant on the closed disk and a topological invariant---the Euler class.
    In this paper, we investigate a relationship between the Euler class and the flux homomorphism for non-orientable compact surfaces with one boundary component.
    Furthermore, we prove the simplicity of the kernel of the flux homomorphisms in this non-orientable setting, which implies the non-existence of invariants analogous to the Calabi invariant.
\end{abstract}

\section{Introduction}

For a compact manifold $X$ with non-empty boundary $\partial X$, it is interesting to study the relationship between the group action on $X$ and the induced action on $\partial X$. 
In other words,  we may consider the extension problem of  a given group action on $\partial X$ to an action on $X$ in a certain regularity, e.g. smooth, orientation-preserving, volume-preserving and so on.
This theme has been studied by various authors (\cite{Ghys91}, \cite{Tsuboi00}, \cite{MannNariman20}, \cite{ChenMann23} and references therein).

Following them, we consider the boundary-restriction map $p : \Diff_0(X) \to \Diff_0(\partial X)$, where $\Diff_0(Z)$ denotes the identity component of the group $\Diff(Z)$ of diffeomorphisms on a manifold $Z$.
In general, $p$ does not admit a section homomorphism, e.g., when $X$ is a closed $n$-dimensional disk $D^n$ \cite{Ghys91}.
One of the interesting observations in \cite{Ghys91} is that $p$ admits a section homomrophism when $X$ is a M\"obius band. 
Hence,  any orientation-preserving group action $G \to \Diff_0(S^1)$ extends to an action $G \to \Diff_0(M)$ by composing the section homomorphism.
In this paper, motivated by this observation, we take non-orientability into account in studying the extension problem of surface group actions on the circle $S^1$. 
We also investigate how non-orientability affects the group structure of diffeomorphism groups.

In the case of smooth extensions on the disk $D^2$, Bowden \cite{bowden11} showed that any surface group action $\pi_1(\Sigma_g)\to \Diff_0(S^1)$ can be extended to an action $\pi_1(\Sigma_{g+g'})\to \Diff_0(D^2)$ after stabilization,
even though there is no section homomorphism of $p:\Diff_0(D^2)\to \Diff_0(S^1)$ \cite{Ghys91}.
In contrast, Tsuboi \cite{Tsuboi00} showed that a surface group action $\rho:\pi_1(\Sigma_g)\to \Diff_0(S^1)$ can be extended to an area-preserving action $\tilde{\rho}:\Diff_0(S^1)\to \Diff_\omega(D^2)$
only if the Euler number of $\rho$ is $0$, 
where  $\omega \in \Omega^2(D^2)$ is the standard area form on $D^2$ and $\Diff_{\omega}(D^2)$ is the group of $\omega$-preserving diffeomorphisms of $D^2$.
Indeed, in \cite{Tsuboi00}, he proved the transgression formula, which describes a relationship between the Euler class $\eu$ of $\Diff_0(S^1)$ and the Calabi invariant.
Recall that the Calabi invariant $\Cal$ is an $\RR$-valued surjective homomorphism on $\Diff_{\omega}(D^2)_{\mathrm{rel}} = \ker(\Diff_{\omega}(D^2) \to \Diff_0(S^1))$ defined as
\[
    \Cal(g) = \int_{D^2} \eta \wedge g^*\eta
\]
for $\eta \in \Omega^1(D^2)$ satisfying $d\eta = \omega$.

\begin{thm}[\cite{Tsuboi00}]
    Let $a_1, b_1, \ldots, a_g, b_g$ be the standard generator of the surface group $\pi_1(\Sigma_g)$, and $\rho \colon \pi_1(\Sigma_g) \to \Diff_0(S^1)$ a homomorphism.
    Let $\phi_i$ (resp. $\psi_i$) be an $\omega$-preserving diffeomorphism of $D^2$ satisfying $p(\phi_i) = \rho(a_i)$ (resp. $p(\psi_i) = \rho(b_i)$).
    Then, the Euler number of $\rho$ coincides with 
    \[
        -\frac{1}{\pi^2}\Cal([\phi_1, \psi_1] \cdots [\phi_g, \psi_g]).
    \]
\end{thm}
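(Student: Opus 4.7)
The plan is to build a central $\ZZ$-extension of $\Diff_\omega(D^2)$ on which a natural ``lifted Calabi invariant'' is a homomorphism to $\RR$, so that the Euler number appears as the central contribution when evaluated on the image of the surface relator. Since $\phi^*\omega = \omega$, the $1$-form $\phi^*\eta - \eta$ is closed on $D^2$, hence exact, so admits a primitive $F_\phi$ with $dF_\phi = \phi^*\eta - \eta$. Writing $\eta = \tfrac12 r^2\,d\theta$, one has $\eta|_{\partial D^2} = \tfrac12 d\theta$, and the boundary restriction of any such primitive takes the form $F_\phi|_{\partial D^2}(\theta) = \tfrac12(\tilde\phi(\theta)-\theta) + C$ for some lift $\tilde\phi \in \tDiff_0(S^1)$ of $\phi|_{\partial D^2}$ and some constant $C$. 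Different lifts change the boundary value by integer multiples of $\pi$, which motivates working on the pullback group
\[
\widetilde{G} := \Diff_\omega(D^2) \times_{\Diff_0(S^1)} \tDiff_0(S^1),
\]
a central $\ZZ$-extension of $\Diff_\omega(D^2)$. To each $(\phi,\tilde\phi) \in \widetilde{G}$ assign the unique primitive $F_{(\phi,\tilde\phi)}$ normalized by $F|_{\partial D^2}(\theta) = \tfrac12(\tilde\phi(\theta)-\theta)$.

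Next, set $\widetilde{\Cal}(\phi,\tilde\phi) := \int_{D^2} F_{(\phi,\tilde\phi)}\,\omega$. The key step is the cocycle identity
\[
F_{(\phi\psi,\tilde\phi\tilde\psi)} = F_{(\phi,\tilde\phi)}\circ\psi + F_{(\psi,\tilde\psi)},
\]
which follows from (i) direct differentiation showing both sides are primitives of $(\phi\psi)^*\eta - \eta$, and (ii) both sides restricting to $\tfrac12(\tilde\phi\tilde\psi(\theta)-\theta)$ on $\partial D^2$, the right-hand side because composition with $\psi$ transports the normalization correctly through the product in $\tDiff_0(S^1)$. Because $\psi$ preserves $\omega$, a change-of-variables yields $\int_{D^2}(F_{(\phi,\tilde\phi)}\circ\psi)\,\omega = \int_{D^2} F_{(\phi,\tilde\phi)}\,\omega$, and integrating the cocycle identity produces additivity of $\widetilde{\Cal}$. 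For $\phi \in \Diff_\omega(D^2)_{\mathrm{rel}}$ paired with the identity lift, the normalization gives $F_{(\phi,\id)}|_{\partial D^2} = 0$, and a standard Stokes-style calculation
\[
\int_{D^2}\eta\wedge\phi^*\eta = -\int_{\partial D^2} F_\phi\,\eta + \int_{D^2} F_\phi\,\omega
\]
recovers $\widetilde{\Cal}(\phi,\id) = \Cal(\phi)$. For $(\phi, T_n)$ with $T_n$ translation by $2\pi n$, the primitive shifts by the constant $\pi n$, giving
\[
\widetilde{\Cal}(\phi, T_n) = \Cal(\phi) + \pi n \int_{D^2}\omega = \Cal(\phi) + \pi^2 n.
\]

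Finally, pick any lifts $\tilde\phi_i,\tilde\psi_i \in \tDiff_0(S^1)$ of $\rho(a_i),\rho(b_i)$, and form $Q := \prod_{i=1}^g [(\phi_i,\tilde\phi_i),(\psi_i,\tilde\psi_i)] \in \widetilde{G}$. Its projection to $\Diff_\omega(D^2)$ is $P := \prod_{i=1}^g [\phi_i,\psi_i]$, which lies in $\Diff_\omega(D^2)_{\mathrm{rel}}$ since $\rho$ vanishes on the surface relator, while its projection to $\tDiff_0(S^1)$ is the translation $T_{e(\rho)}$ by the standard cocycle description of the Euler number. Hence $Q = (P, T_{e(\rho)})$, and because $\widetilde{\Cal}$ is an $\RR$-valued homomorphism it vanishes on products of commutators, so
\[
0 = \widetilde{\Cal}(Q) = \Cal(P) + \pi^2 e(\rho),
\]
which rearranges to the formula in the theorem. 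The main obstacle is the cocycle verification in the second step: the precise matching of boundary constants is what forces the normalization to live over $\tDiff_0(S^1)$ rather than $\Diff_0(S^1)$, and this is also what makes the $\ZZ$-contribution to $\widetilde{\Cal}$ land exactly on $\pi^2\ZZ$, producing the coefficient $-\pi^{-2}$ in the conclusion.
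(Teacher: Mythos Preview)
Your argument is correct. Note, however, that the paper does not give its own proof of this statement: it is quoted from \cite{Tsuboi00} as background, and the paper's own contribution is the analogous transgression formula for non-orientable surfaces (\refthm{transgression_flux}) together with its consequence for surface-group representations.

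That said, your route differs genuinely from the framework the paper adopts for its own results. You pass to the central $\ZZ$-extension $\widetilde G=\Diff_\omega(D^2)\times_{\Diff_0(S^1)}\tDiff_0(S^1)$, build an honest homomorphism $\widetilde{\Cal}\colon\widetilde G\to\RR$ extending $\Cal$, and read off the Euler number as the central contribution when $\widetilde{\Cal}$ is evaluated on the lifted surface relator. The paper, following Bowden, instead stays on $\Diff_\omega(D^2)$ (respectively $\G(N)$) and works at the cochain level: one produces a $1$-cochain $C$ with $C|_{\ker p}=\Cal$ and $\delta C=p^*\chi$ for an explicit Euler cocycle $\chi$, so that the conclusion drops out of \refprop{transgression_general} and the five-term exact sequence. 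Your approach makes the integrality and the constant $\pi^2$ completely transparent through the shift $\widetilde{\Cal}(\phi,T_n)=\Cal(\phi)+\pi^2 n$; the cochain approach is more portable, since it requires no auxiliary group on which the extended invariant is a homomorphism, and this flexibility is exactly what the paper exploits in the non-orientable setting, where the cochain $F_\lambda$ and the Stokes computation of $\delta F_\lambda$ go through without change.
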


In \cite{bowden11}, Bowden generalized Tsuboi’s transgression formula in terms of the five-term exact sequence in group cohomology.

\begin{thm}[{\cite[Theorem 6.3]{bowden11}}]
    There exists a representative $\chi$ of the Euler class $\mathrm{eu} \in H^2(\Diff_0(S^1);\RR)$ and an $\RR$-valued $1$-cochain $C$ on the group $\Diff_{\omega}(D^2)$ of area-preserving diffeomorphisms of $D^2$ such that 
    \begin{align*}
        &C(g) = \Cal(g) \\
        &\delta C(\gamma_1, \gamma_2) = -\pi^2\chi(\gamma_1, \gamma_2)
    \end{align*}
    for every $g \in \Diff_{\omega}(D^2)_{\mathrm{rel}}$ and $\gamma_1, \gamma_2 \in \Diff_0(S^1)$.
    In particular, the Calabi invariant $\Cal$ transgresses to $-\pi^2 \mathrm{eu}$ with respect to the exact sequence 
    \[
        1 \to \Diff_{\omega}(D^2)_{\mathrm{rel}} \to \Diff_{\omega}(D^2) \to \Diff_0(S^1) \to 1.
    \]
\end{thm}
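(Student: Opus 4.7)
The plan is to construct $C$ by extending the integrand that defines the Calabi invariant from $\Diff_{\omega}(D^2)_{\mathrm{rel}}$ to all of $\Diff_{\omega}(D^2)$, to compute $\delta C$ directly, and to identify the resulting $2$-cochain with the pullback of $-\pi^2 \mathrm{eu}$ along the boundary restriction.  The five-term exact sequence statement is then a formal consequence of this cochain-level computation.

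I would set $C(g):=\int_{D^2}\eta\wedge g^{*}\eta$ for every $g\in\Diff_{\omega}(D^2)$; this is defined without any boundary hypothesis and manifestly restricts to $\Cal$ on $\Diff_{\omega}(D^2)_{\mathrm{rel}}$.  Writing $\alpha_g:=g^{*}\eta-\eta$, area-preservation gives $d\alpha_g=g^{*}\omega-\omega=0$, and the identity $(g_1g_2)^{*}\eta=g_2^{*}g_1^{*}\eta$ becomes the crossed-homomorphism relation
\[
\alpha_{g_1g_2}=\alpha_{g_2}+g_2^{*}\alpha_{g_1}.
\]
Combining this with the change-of-variable $\int g_2^{*}(\cdot)=\int(\cdot)$ and the vanishing $\eta\wedge\eta=0$, a short expansion yields
\[
C(g_1g_2)=C(g_1)+C(g_2)+\int_{D^2}\alpha_{g_2^{-1}}\wedge\alpha_{g_1},
\]
so $\delta C(g_1,g_2)=-\int_{D^2}\alpha_{g_2^{-1}}\wedge\alpha_{g_1}$.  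Since $D^2$ is simply connected, I can choose primitives $F_g$ with $dF_g=\alpha_g$, normalized by fixing $F_g$ at a basepoint of $\partial D^2$; under this normalization $F_g|_{\partial D^2}$ and $\alpha_g|_{\partial D^2}$ depend only on $p(g)\in\Diff_0(S^1)$ and vanish whenever $p(g)=\mathrm{id}$.  Stokes' theorem (using $d\alpha_{g_2^{-1}}=0$) converts the area integral into a boundary integral
\[
\delta C(g_1,g_2)=\int_{\partial D^2}F_{g_1}\,\alpha_{g_2^{-1}},
\]
so $\delta C=p^{*}\chi$ for a $2$-cocycle $\chi$ on $\Diff_0(S^1)$; this is the transgression statement in the five-term exact sequence.

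It remains to identify $[\chi]$ with $-\pi^2\mathrm{eu}$.  Taking the unit disk with $\omega=dx\wedge dy$ and $\eta=\tfrac12(x\,dy-y\,dx)$, one has $\eta|_{\partial D^2}=\tfrac12 d\theta$ and $\int_{\partial D^2}\eta=\pi$.  Choosing continuous lifts $\tilde{\gamma}:\RR\to\RR$ of each $\gamma\in\Diff_0(S^1)$, one finds explicitly $F_g|_{\partial D^2}(\theta)=\tfrac12\bigl((\tilde{\gamma}(\theta)-\theta)-(\tilde{\gamma}(\theta_0)-\theta_0)\bigr)$ for $\gamma=p(g)$, and a parallel expression for $\alpha_{g^{-1}}|_{\partial D^2}$.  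A direct integration then rewrites $\int_{\partial D^2}F_{g_1}\alpha_{g_2^{-1}}$ as $-\pi^2$ times a standard translation-number cocycle representing $\mathrm{eu}$; the two factors of $\pi$ arise from $\int_{\partial D^2}\eta=\pi$.  The main obstacle is this last identification: the primitives $F_g$ and the lifts $\tilde{\gamma}$ must be chosen coherently enough that the boundary cocycle equals $-\pi^2$ times a chosen representative of $\mathrm{eu}$ not merely up to coboundary but as a specific $2$-cochain on $\Diff_0(S^1)$.  Once the bookkeeping is fixed, Tsuboi's original formula follows by evaluating the transgressed identity on the standard surface-group relation $[a_1,b_1]\cdots[a_g,b_g]=1$.
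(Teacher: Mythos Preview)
The paper does not itself prove this statement---it is quoted from Bowden as background---so there is no proof in the paper to compare against directly.  What the paper \emph{does} prove is the analogous transgression formula for $\Flux_\lambda$ (\refthm{transgression_flux}), and your proposal follows exactly the same template that the paper uses there: extend the homomorphism by the same integral formula to a $1$-cochain on the full group, expand $\delta C$ algebraically using $(g_1g_2)^*=g_2^*g_1^*$ and the change-of-variables $\int g^*(\cdot)=\int(\cdot)$, apply Stokes to reduce the resulting $\int_{D^2}(\text{closed})\wedge(\text{closed})$ to a boundary integral, and then identify that boundary cocycle with a multiple of the Euler class via the Poincar\'e translation number (the paper packages this last step as the lemma in \refsec{euFormula}, which gives $[\chi]=AB\cdot\eu$ with $A=B=\int_{S^1}i^*\eta=\pi$).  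The only structural difference is that the Calabi cochain is quadratic in $\eta$ while the flux cochain pairs $\eta-h^*\eta$ against a fixed closed $\lambda$, so in the paper's computation the primitive is taken for $\lambda-(h_2^{-1})^*\lambda$ rather than for $\alpha_{g_1}$; after that the two arguments are parallel.  Your acknowledged ``bookkeeping'' obstacle---making the boundary cocycle equal a specific representative of $\eu$ on the nose rather than merely cohomologous to one---is exactly what the paper's \refsec{euFormula} handles by writing down $\chi$ explicitly and comparing it to the Matsumoto cocycle via $F(\tgamma)=\int_0^1(\Phi-\Phi\circ\tgamma)\psi\,d\theta$.
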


As mentioned above, in the case of the M\"obius band $M$, smooth extendability of a surface group action $\pi_1(\Sigma_g)\to \Diff_0(S^1)$ follows from the existence of a section homomorphism of the restriction map.
Therefore, the extension problem in an ``area"-preserving way naturally arises.

Although a non-orientable surface does not admit any area forms in the usual sense, there is a natural analogous notion, called an \emph{area density}.
Indeed, on a non-orientable manifold, we make use of  \emph{``twisted" differential forms}, which provide the right framework for extending most theorems of differential topology to the non-orientable setting. 
An \emph{area density} $\omega$ on a compact non-orientable surface $N$ is an everywhere positive twisted $2$-form.
See \refsec{preliminaries} for the precise definitions of density forms and twisted differential forms.

One might expect a transgression formula for the Euler class and the Calabi-type invariant for non-orientable surfaces.
However, we show that there is no homomorphism analogous to the Calabi invariant under non-orientability, by proving the simplicity of the kernel of some surjective homomorphism $\Fluxn: \Diff_\omega(N, \near \partial N)_0\to H^1(N,\partial N;L)$.
Here,  $H^*(N,\partial N;L)$ is the cohomology group of the twisted differential forms, that vanish on $\partial N$ (see \refsec{preliminaries}) and  $\Diff_\omega(F, \near \partial F)_0$ is the identity component (with respect to the $C^{\infty}$-topology) of the group of $\omega$-preserving diffeomorphisms of a compact surface $F$ with boundary, that fix some neighborhood of $\partial F$ pointwise.
\begin{thm}[Simplicity of $\ker(\Fluxn)$]\label{Thm:simplicity}
Let $N$ be a non-orientable, connected, compact surface with non-empty boundary, equipped with an area density form $\omega$. Then, $\ker(\Fluxn)$ is simple.
\end{thm}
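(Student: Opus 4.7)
The strategy is to adapt the classical Banyaga-type simplicity argument for (compactly supported) area-preserving diffeomorphism groups to the density-preserving, non-orientable setting, combining a fragmentation lemma with a normal-closure argument and using the ``cell division trick'' advertised in the keywords to manage the local obstructions cell by cell.

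The first step would be a fragmentation lemma: every element of $\Diff_\omega(N, \near \partial N)_0$ can be written as a product of elements each supported in the star of a vertex of a fixed triangulation of $N$, each such star being an orientable $\omega$-disk. This is a standard adaptation of Banyaga's fragmentation to the density setting. Within each cell $D$---an orientable disk equipped with $\omega|_D$---the group $\Diff_\omega(D, \near \partial D)_0$ carries a local Calabi invariant $\Cal_D$, and Banyaga's theorem gives that $\ker(\Cal_D)$ is simple.

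The heart of the proof would then be to show that for $f \in \ker(\Fluxn)$ fragmented as $f = f_1 \cdots f_n$ with $\supp(f_i) \subset D_i$, one can modify the factorization (by inserting commutators of cell-supported maps) so that each $f_i$ may be assumed to lie in $\ker(\Cal_{D_i})$. The crucial mechanism here is non-orientability: a small bump supported in a disk, conjugated by a diffeomorphism traversing an orientation-reversing loop of $N$, returns with its Calabi invariant negated, so commutators of cell-supported maps can realize arbitrary local Calabi adjustments modulo the obstruction measured by $\Fluxn$. Given this, simplicity follows via the standard normal-closure argument: for any nontrivial $f$ in a normal subgroup $H \leq \ker(\Fluxn)$, produce by further commutators a nontrivial element of $H$ supported in a single cell with vanishing local Calabi, invoke Banyaga's local simplicity to get $H \supseteq \ker(\Cal_D)$ for that cell, and spread this by cell-support transitivity and fragmentation to conclude $H = \ker(\Fluxn)$.

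The main obstacle is the Calabi-cancellation step: one must make precise how the local $\Cal_{D_i}(f_i)$ values assemble into the global class $\Fluxn(f) \in H^1(N,\partial N; L)$, and verify that the only obstruction to setting every local Calabi to zero is exactly $\Fluxn(f)$, which vanishes by hypothesis. This bookkeeping---with the orientation-reversing cycles of $N$ producing the sign flips that collapse the Calabi obstruction globally---is where the cell division trick and the non-orientability of $N$ play the essential roles.
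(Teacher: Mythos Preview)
Your overall architecture---fragment into disk-supported pieces, kill each local Calabi using non-orientability, then run the Thurston/Banyaga normal-closure argument---matches the paper's proof. But your account of the crucial middle step contains a genuine misconception that would derail the bookkeeping you propose.

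You write that the local values $\Cal_{D_i}(f_i)$ ``assemble into the global class $\Fluxn(f)\in H^1(N,\partial N;L)$'' and that vanishing of $\Fluxn(f)$ is what permits zeroing all the local Calabi. This is not how the two invariants relate. Any disk-supported element automatically has $\Fluxn=0$ (the paper records this as a separate lemma), so the flux condition is already consumed at the \emph{fragmentation} stage, not at the Calabi-cancellation stage. The local Calabi values do not assemble into $\Fluxn$; on an orientable surface they would assemble into the global Calabi invariant, which is a different object entirely. The point of non-orientability is precisely that there is \emph{no} global obstruction to annihilating every local Calabi---not that the obstruction happens to vanish because $\Fluxn(f)=0$.

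Correspondingly, the paper's Calabi-cancellation step (the ``Cell Division Trick'') is purely local and requires no inter-cell bookkeeping. Given $h$ supported in a disk inside a M\"obius subband $M\subset N$, one chooses two disks $U,V\subset M$ with $U\cup V$ an open M\"obius band and $U\cap V=A\sqcup B$ two disks, with $\supp(h)\subset U\setminus\overline V$. Fixing sections $e_U,e_V$ that agree on $A$ and disagree on $B$, one inserts bumps $g_A,g_B$ supported in $A,B$ so that $h\,g_A g_B$ has $\Cal_U=0$ while $g_B^{-1}g_A^{-1}$ has $\Cal_V=0$. Your ``conjugate by a diffeomorphism traversing an orientation-reversing loop'' captures the same sign-flip intuition, but note that such a conjugator would itself have nonzero flux; the paper's two-disk formulation sidesteps this and is what you should aim for when making the argument precise.
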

More precisely, for a compact orientable surface $F$ with non-empty boundary,
the flux homomorphism $\Flux :\Diff_{\omega}(F)_{\mathrm{rel}} \to H^1(F,\partial F;L)$ is defined by 
\[
    \Flux(g) = [\eta - g^*\eta],
\]
where $\eta$ is a twisted $1$-form with $d\eta = \omega$ (see \refsec{Flux}).
When $F$ is orientable, the Calabi invariant is well-defined on $\ker(\Flux)$. 
In particular, when $F=D^2$, $\Flux$ is a trivial homomorphism and $\Cal$ is well-defined on the whole group $\Diff_\omega(D^2)_{\mathrm{rel}}$.  
Moreover, the restriction  $\Fluxn$ to $\Diff_{\omega}(F, \near \partial F)_0$ is surjective, and
the restriction of the Calabi invariant $\Cal : \ker(\Flux) \to \RR$ to $\ker(\Fluxn)$ remains surjective. 
Hence, $\ker(\Fluxn)$ is not simple, but it is well-known that $\ker(\Cal)\cap \ker(\Fluxn)$ is simple \cite{Banyaga78}.
Therefore, \refthm{simplicity} implies the non-existence of homomorphisms analogous to the Calabi invariant.

The key idea of the proof of \refthm{simplicity} is the \emph{Cell Division Trick} (\reflem{cellDivision}),  which is a generic phenomenon on non-orientable surfaces $N$. 
Roughly speaking, given a finite collection of properly embedded arcs $\{\gamma_i\}_i$ on $N$ (see \reffig{surface}), the complement $\fD$ of which is contractible, by the Poincar\'e duality (\refprop{PD}), there is a collection of closed $1$-forms $\{\lambda_i\}_i$ on $N$. 
For a closed $1$-form $\lambda$ on $N$, 
the \emph{$\lambda$-flux homomorhpism} $\Flux_{\lambda} : \Diff_{\omega}(N)_{\mathrm{rel}}  \to \RR$ can be defined as
\[
    \Flux_{\lambda}(g) = \int_{N} (\eta-g^*\eta) \wedge \lambda
\]
where $d\eta=\omega$.
We can observe that the $\lambda_i$-flux homomorphism measure the amount of signed area passing through $\gamma_i$ along an isotopy from the identity to $g$   (see \refsec{swept_area}).
Hence, the movement of an element in $\ker(\Fluxn)$ is trapped in the ``complement" of  $\{\gamma_i\}_i$.
Therefore, the only possible movement of such an element is twisting a contractible region in $\fD$.
The classical Calabi invariant, defined on an orientable surface, measures this twisting effect.
However, the situation changes dramatically on non-orientable surfaces. 
Non-orientability cancels out the local twisting effect. 
We formulate this phenomenon in the form of the Cell Division Trick. 

Even though there is no Calabi-type homomorphism, we can make use of flux homomorphisms to establish the transgression formula for non-orientable surface $N$ with one boundary component.

\begin{thm}\label{Thm:transgression_flux}
 Let $N$ be a non-orientable surface with one boundary component $\partial N=S^1$, equipped with an area density form $\omega$.
 Then, for each closed $1$-form $\lambda$ on $N$, there exists a representative $\chi$ of (non-zero constant multiple of) the Euler class $\eu \in H^2(\Diff_0(S^1);\RR)$ and an $\RR$-valued $1$-cochain $F_{\lambda}$ on $\Diff_{\omega}(N)$ such that
    \begin{align*}
        &\Flux_{\lambda}(g) = F_{\lambda}(g) \\
        &\delta F_{\lambda} (h_1, h_2) = \chi(p(h_1), p(h_2))
    \end{align*}
    for $g \in \Diff_{\omega}(N)_{\mathrm{rel}} $ and $h_1, h_2 \in \Diff_{\omega}(N)$.
    In particular, the $\lambda$-flux homomorphism transgresses to the Euler class $\mathrm{eu}$ up to a non-zero constant multiple, with respect to the exact sequence 
\begin{align}\label{eq:bdry_res_ex}
        1 \to \Diff_{\omega}(N)_{\mathrm{rel}} \xrightarrow{i} \Diff_{\omega}(N) \xrightarrow{p} \Diff_0(S^1) \to 1.
    \end{align}
\end{thm}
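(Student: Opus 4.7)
The plan is to transplant the Tsuboi--Bowden transgression argument for the Calabi invariant on the disk to our non-orientable setting, with the $\lambda$-flux homomorphism $\Flux_{\lambda}$ playing the role of the Calabi invariant and twisted differential forms replacing ordinary forms.

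\textbf{Step 1 (The extension cochain).} Fix a twisted 1-form $\eta$ on $N$ with $d\eta = \omega$, and set
\[
F_{\lambda}(h) \,:=\, \int_N (\eta - h^*\eta)\wedge \lambda, \qquad h \in \Diff_{\omega}(N).
\]
Because $\omega$-preserving diffeomorphisms preserve integration of twisted top forms and $(\eta - h^*\eta)\wedge\lambda$ is a twisted 2-form on the compact surface $N$, the integral is well defined. For $g \in \Diff_{\omega}(N)_{\mathrm{rel}}$, $g$ fixes a collar of $\partial N$, so the integrand is compactly supported in the interior and realises the Poincar\'e--Lefschetz pairing of $\Flux(g) \in H^1(N,\partial N;L)$ with $[\lambda]$; hence $F_{\lambda}(g) = \Flux_{\lambda}(g)$.

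\textbf{Step 2 (Coboundary formula).} Expanding $(h_1 h_2)^*\eta = h_2^*h_1^*\eta$ and using the $\omega$-invariance $\int_N h_2^*\beta = \int_N \beta$ on twisted 2-forms, an elementary manipulation yields
\[
\delta F_{\lambda}(h_1,h_2) \,=\, \int_N h_2^*(h_1^*\eta - \eta)\wedge (\lambda - h_2^*\lambda).
\]

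\textbf{Step 3 (Reduction to a boundary integral).} The form $\lambda - h_2^*\lambda$ is closed, and one can arrange (by restricting to the appropriate identity component, or by exploiting that $N$ has a single boundary component so that the $\Diff_{\omega}(N)$-action on $H^1(N;\RR)$ is controlled by its boundary restriction) that $\lambda - h_2^*\lambda = d\psi_{h_2}$ for some function $\psi_{h_2}$. Since $h_1^*\eta - \eta$ is closed (from $d\eta = \omega = h_1^*\omega$), the twisted Stokes theorem gives
\[
\delta F_{\lambda}(h_1,h_2) \,=\, -\int_{\partial N} \psi_{h_2}\big|_{\partial N} \cdot \bigl(h_2^*(h_1^*\eta - \eta)\bigr)\big|_{\partial N}.
\]
Since $h_i|_{\partial N} = p(h_i)$, the boundary trace equals $p(h_2)^*(p(h_1)^*\theta - \theta)$ with $\theta := \eta|_{\partial N}$, and the additive constant ambiguity in $\psi_{h_2}|_{\partial N}$ integrates to zero because $\int_{S^1} p(h_2)^*(p(h_1)^*\theta - \theta) = 0$. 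Hence $\delta F_{\lambda}$ factors through $p\times p$ as a 2-cocycle $\chi$ on $\Diff_0(S^1)$.

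\textbf{Step 4 (Identification with the Euler class).} With $\mu := \lambda|_{\partial N}$ and $\Psi_f$ any primitive on $S^1$ of $\mu - f^*\mu$, the descended cocycle reads
\[
\chi(f_1,f_2) \,=\, -\int_{S^1} \Psi_{f_2}\cdot f_2^*(f_1^*\theta - \theta),
\]
and can be compared---via precisely the same circle-group computation used by Tsuboi and Bowden on the disk---with a standard representative of the Euler class of $\Diff_0(S^1)$. The resulting constant of proportionality is (a universal multiple of) $\int_{S^1}\theta \cdot \int_{\partial N}\lambda = \mathrm{area}(N)\cdot\int_{\partial N}\lambda$, which is non-zero whenever $\lambda$ has non-trivial boundary period.

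\textbf{Main obstacle.} The subtle step is Step 3: guaranteeing the exactness of $\lambda - h_2^*\lambda$ and choosing a primitive $\psi_{h_2}$ whose boundary trace depends only on $p(h_2)$. The subsequent identification of $\chi$ with a non-zero multiple of the Euler cocycle in Step 4 is then a routine computation parallel to the orientable case, but care is needed to verify that the accompanying constant is actually non-zero.
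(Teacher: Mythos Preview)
Your approach is essentially the paper's: the same cochain $F_\lambda(h)=\int_N(\eta-h^*\eta)\wedge\lambda$, the same coboundary manipulation (the paper writes it as $\int_N(\eta-h_1^*\eta)\wedge(\lambda-(h_2^{-1})^*\lambda)$, which is your formula after pulling back by $h_2$), and the same Stokes reduction to a boundary integral. Your ``main obstacle'' dissolves once you observe, as the paper does, that the exact sequence is really $1\to G_{\mathrm{rel}}(N)\to G(N)\to\Diff_0(S^1)\to 1$ with $G(N)$ the identity component of $\Diff_\omega(N)$: then $h_2$ is isotopic to the identity, so $\lambda-(h_2^{-1})^*\lambda$ is exact without further argument, and the boundary trace of any primitive is determined (up to the harmless additive constant you already noted) by $p(h_2)$. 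For Step~4 the paper does not defer to Tsuboi--Bowden but instead performs one more integration by parts on $S^1$ to put $\chi$ in the form $\chi(\gamma_1,\gamma_2)=\int_{S^1}(\alpha_{\gamma_1}-\gamma_2^*\alpha_{\gamma_1})\,i^*\lambda$, and then identifies this explicitly with $A_\omega B_\lambda\cdot\mathrm{eu}$ via the Poincar\'e translation number and the Matsumoto cocycle; your predicted constant $\mathrm{area}(N)\cdot\int_{\partial N}\lambda$ is exactly $A_\omega B_\lambda$, and your caveat about its possible vanishing is well taken.
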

\begin{rmk}
    The exactness of \eqref{eq:bdry_res_ex} is shown in \refprop{boundarySurj}.
\end{rmk}
In particular, this implies the following, which is analogous to the theorem of Tsuboi.
\begin{thm}
    Let $a_1, b_1, \ldots, a_g, b_g$ be the standard generator of the surface group $\pi_1(\Sigma_g)$, and $\rho \colon \pi_1(\Sigma_g) \to \Diff_0(S^1)$ a homomorphism.
    Let $\phi_i$ (resp. $\psi_i$) be an $\omega$-preserving diffeomorphism of $N$ satisfying $p(\phi_i) = \rho(a_i)$ (resp. $p(\psi_i) = \rho(b_i)$).
    Then, the Euler number of $\rho$ coincides with 
    \[
        \Flux_{\lambda}([\phi_1, \psi_1] \cdots [\phi_g, \psi_g]).
    \]
    up to a non-zero constant multiple.
\end{thm}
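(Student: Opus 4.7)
The plan is to deduce the statement from Theorem~\ref{Thm:transgression_flux} by the standard argument that recovers a pulled-back cohomology class on a surface group via a bar-style evaluation on its one relator.

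First I would set $c := [\phi_1,\psi_1]\cdots[\phi_g,\psi_g] \in \Diff_\omega(N)$ and observe that, because $p$ is a homomorphism and $\prod_{i=1}^g[a_i,b_i] = 1$ in $\pi_1(\Sigma_g)$, we have $p(c) = \prod_i[\rho(a_i),\rho(b_i)] = 1$. Thus $c \in \Diff_\omega(N)_{\mathrm{rel}}$, and the first identity of Theorem~\ref{Thm:transgression_flux} yields
\[
\Flux_\lambda(c) = F_\lambda(c).
\]
It then suffices to show that $F_\lambda(c)$ equals the same non-zero constant multiple of the Euler number of $\rho$.

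Next I would compute $F_\lambda(c)$ by iterating the cocycle identity $F_\lambda(xy) = F_\lambda(x) + F_\lambda(y) - \chi(p(x),p(y))$ (coming from $\delta F_\lambda = p^*\chi$) along the word $\phi_1\psi_1\phi_1^{-1}\psi_1^{-1}\cdots\phi_g\psi_g\phi_g^{-1}\psi_g^{-1}$. Together with the derived identity $F_\lambda(x^{-1}) = -F_\lambda(x) + \chi(p(x),p(x^{-1}))$, this telescopes: because the whole word projects under $p$ to the identity, every individual $F_\lambda(\phi_i^{\pm 1})$ and $F_\lambda(\psi_i^{\pm 1})$ term cancels, leaving a purely $\chi$-valued expression depending only on the images $\rho(a_i) = p(\phi_i)$ and $\rho(b_i) = p(\psi_i)$.

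Finally I would identify that expression as the standard bar-resolution pairing $\langle \rho^{*}[\chi], [\Sigma_g] \rangle$ determined by the one-relator presentation of $\pi_1(\Sigma_g)$. Since $[\chi]$ is, by Theorem~\ref{Thm:transgression_flux}, a non-zero constant multiple of the Euler class $\eu \in H^2(\Diff_0(S^1);\RR)$, this pairing equals the same constant multiple of the Euler number of $\rho$, as claimed.

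The main obstacle lies in the bookkeeping of the telescoping step: one must organize the $\chi$-contributions produced at each iteration so that the final sum is precisely the bar-cochain evaluation on the polygonal relator, which fixes the sign and normalization relative to the Euler number. This is routine but delicate, closely parallel to the orientable computations of Tsuboi and Bowden, and requires consistent conventions for the coboundary $\delta$ and for the fundamental class of $\Sigma_g$.
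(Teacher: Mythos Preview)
Your proposal is correct and follows exactly the route the paper indicates: the paper presents this theorem as an immediate consequence of Theorem~\ref{Thm:transgression_flux} (``In particular, this implies the following'') without spelling out a proof, and your argument via the cocycle identity $\delta F_\lambda = p^*\chi$ and the bar-resolution pairing on the one-relator presentation of $\pi_1(\Sigma_g)$ is precisely the standard way to unpack that implication.
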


\subsection*{Origanization}
\refsec{preliminaries} is for preliminaries, including differential and algebraic topology for twisted forms.
Also, we remark some facts in Appendix~\ref{Appendix}.
In \refsec{transgression}, we prove the transgression formula for the flux homomorphism and the Euler class (\refthm{transgression_flux}).
In \refsec{swept_area}, we explain a geometric meaning of the flux homomorphism in terms of swept-area.
In \refsec{excision}, we show a fragmentation lemma (\reflem{frag}) for $\ker(\Fluxn)$. Then, by using  the Cell division trick (\reflem{cellDivision}), we promote it to a strong version of fragmentation lemma (\reflem{strongFrag}).
Based on \reflem{strongFrag}, we prove the simplicity theorem (\refthm{simplicity}).

\section{Preliminaries}\label{Sec:preliminaries}

\subsection{Twisted differential forms}
We recall some basic notions related to twisted differential forms. For details, e.g. Stokes' Theorem, elementary exterior algebra for twisted differential forms and so on, see \cite{BottTu82} and \cite{deRham84}, or Appendix~\ref{Appendix}.

Let $X$ be a compact manifold with or without boundary and $\{(U_\alpha,\varphi_\alpha)\}$ its atlas.
We denote by $s_{\alpha \beta}$ the sign of the Jacobian determinant of $\varphi_\alpha\circ \varphi_\beta^{-1}$ (if it is defined).
The \emph{orientation bundle} $L_X$ of $X$ is a line bundle over $X$, whose atlas $\{(V_\alpha,\psi_\alpha)\}_{\alpha\in \Gamma}$ is defined as follows:
\begin{itemize}
    \item $V_\alpha=L_X|_{U_\alpha}$ and  $\psi_\alpha:V_\alpha \to U_\alpha \times \RR$ is a local trivialization of $L_X$;
    \item if $V_\alpha \cap V_\beta\neq \varnothing$, then  $\psi_\alpha\circ \psi_\beta^{-1}:\psi_\beta(V_\alpha \cap V_\beta)\to \psi_\alpha(V_\alpha \cap V_\beta)$ is given as 
    \[
    \psi_\alpha\circ \psi_\beta^{-1}(x,v)=(\varphi_\alpha\circ \varphi_\beta^{-1}(x),s_{\alpha \beta}(x)v )
    \]
    for $(x,v)\in \psi_\beta(V_\alpha \cap V_\beta)\subset U_\beta \times \RR$.
\end{itemize}
Namely, the transition functions of $L_X$ are given by $s_{\alpha\beta}$.
If there is no confusion, we just write $L$ for $L_X$.

A \emph{twisted differential $p$-form} (or simply, a \emph{twisted $p$-form}) on $X$ is a global section of the vector bundle $(\bigwedge^pT_X^*)\bigotimes L$.
We denote by $\Omega^p(X;L)$ the space of twisted $p$-forms on $X$.
The \emph{exterior derivative } $d:\Omega^p(X;L)\to \Omega^{p+1}(X;L)$ is defined as follows:
For each $\alpha\in \Gamma$, a local section $e_\alpha$ of $L$ over $U_\alpha$ is given as  $e_\alpha(u)=(u,1)$ for all $u\in U_\alpha$. We call such a section a \emph{standard locally constant section}.
In $(U_\alpha,\varphi_\alpha)$, a twisted $p$-form $\mu$ can be written as
$\mu=\nu \otimes e_\alpha$ for some differential $p$-form $\nu$ over $U_\alpha$.
Then, we set $d\mu=(d\nu)\otimes e_\alpha$ and assume that $d$ satisfies the linearity and the Leibnitz rule.

Since $d^2=0$, we have a well-defined cochain complex
$(\Omega^\bullet(X;L), d)$, called the \emph{twisted de Rham complex}, and its homology $H^*(X;L)$.
Moreover, we denote by $\Omega^\bullet_c(X;L)$ the space of compactly supported twisted forms and define the homology $H_c^*(X;L)$ for the cochain complex
$(\Omega^\bullet_c(X;L), d)$.

\begin{const}[Associated forms]\label{Const:associatedForm}
    If a $p$-form $\nu$ on $X$ is supported on an orientable submanifold $Z$, then we can associate $\nu$ with a twisted $p$-form $\mu$ on $X$ as follows.
    For each component $Z_j$ of $Z$, there is a differential $p$-form $\nu_j$ on $X$ such that $\nu_j = \mu$ on $Z_j$ and $\nu_j$ is supported on $Z_j$.
    Take a constant section $e_j$ of $L_X|_{Z_j}$ such that either $\psi_\alpha\circ e_j=1$ or $\psi_\alpha\circ e_j=-1$ in any local trivialization $(\psi_\alpha, V_\alpha)$.
    Then, $\nu_j \otimes e_j$ is a twisted $p$-form supported on $Z_j$, which can be extended to $X$ by defining it to zero outside $Z_j$. Therefore, $\mu=\sum_j \nu_j \otimes e_j$ is a well-defined twisted form on $X$, supported on $Z$.
    Conversely, given a twisted form supported on an orientable submanifold, we can construct an ordinary differential form by undoing the tensoring.
\end{const}

Since there is no canonical choice of $e_j$, there is no canonical way of association.
Nonetheless, when $X$ is orientable, $L$ is the trivial line bundle, we can take a global constant section of $L$ to associate each form to a twisted form as above. This implies that for an orientable manifold, the twisted cohomology is the same as the ordinary de Rham cohomology.

In fact, twisted forms are nothing but differential forms with the following coordinate change rule: after changing the coordinate in the usual sense, we multiply by the sign of the Jacobian determinant of the transition map. 
This is how de Rham introduced \emph{differential forms of odd type} in \cite{deRham84}.
According to his terminology, the usual differential forms are \emph{differential forms of even type}.
Note that the wedge product of two forms of the same type is of even type, and the wedge product of two forms of different types is of odd type.

\subsection{Pullback of twisted forms}\label{Subsec:pullback}
To define pullback of twisted forms, we first introduce an orientation of a map.
Let $X$ and $Y$ be connected smooth manifolds with or without boundary and $h:X\to Y$ a smooth map.
Say that $\pi_X:L_X\to X$ and $\pi_Y:L_Y\to Y$ are the projection maps. 
An \emph{orientation} of $h$ is a bundle morphism $h^\flat:L_X\to L_Y$ such that $h\circ \pi_X= \pi_Y\circ h^\flat$ and for every pair of trivializations $(U,\varphi)$ and $(V,\psi)$ of $L_X$ and $L_Y$, respectively, with $h^\flat(U)\subset V$, if $e_V$ is the standard locally constant section of $L_Y$ over $\pi_Y(V)$, then the local section $e$ of $L_X$ over $\pi_X(U)$, given as 
\[
h^\flat(e(x))=e_V(h(x))
\]
is either the standard locally conatnst section $e_U$ of $L_X$ over $\pi_X(U)$ or $-e_U$.
Compare it with the definition of an orientation of a map in \cite[Chapter~II, $\mathsection$ 5]{deRham84}.

When $\dim(X)=\dim(Y)$ and $h$ has no critical point, we can assign the \emph{canonical orientation} $h^\flat$, which is introduced in \cite[page~21]{deRham84}, as follows:
for any local coordinates $(U_\alpha,\varphi_\alpha)$ and $(V_\beta,\psi_\beta)$ of $X$ and $Y$, respectively, such that $h(U_\alpha)\subset V_\beta$ and the Jacobian determinant of $ \psi_\beta \circ h \circ \varphi_\alpha^{-1}$ is positive on $\varphi_\alpha(U)$,
we define $h^\flat$ as
\[
h^\flat(e_\alpha(x))=e_\beta(h(x)),
\]
where $e_\alpha$ and $e_\beta$ are the standard locally constant sections over $U_\alpha$ and $V_\beta$, respectively.
Also, whenever $X$ is a submanifold of $Y$, that is, there is an inclusion map $i:X\hookrightarrow Y$, we can also have the \emph{canonical orientation} of $i$ by taking the restriction of the identity map of $L_Y$ on $i(X)$, namely, $\id_{L_Y}{\restriction_{i(X)}}$.
From now on, we will use the canonical orientations without further mention, unless confusion might arise.

Let $h:X\to Y$ be a smooth map oriented by $h^\flat$ and $\mu$ a twisted form on $Y$.
The \emph{pullback} $h^*\mu$ of $\mu$ by $h$ with respect to $h^\flat$ is defined as 
\[
(h^*\mu)_x= h^*\nu \otimes (h^\flat)^{-1}(e)
\]
for $\nu\in (\bigwedge^p T^*Y)_{h(x)}$ and $e\in L_{h(x)}$ with $\mu_{h(x)}=\nu\otimes e$. 

Now, we can introduce the relative version of a twisted de Rham complex.
Let $X$ be a compact manifold with boundary $\partial X$ and $i:\partial X \hookrightarrow X$ the inclusion map. 
We denote by $\Omega^p(X,\partial X;L)$ the space of twisted $p$-forms $\mu$ such that $i^*\mu=0$.
Then, we have a well-defined cochain complex $(\Omega^\bullet(X,\partial X;L), d)$ and its homology $H^*(X,\partial X;L)$.

\subsection{Volume-preserving diffeomorphisms}

On a non-orientable manifold, a volume form is not well-defined. 
Nonetheless, for any manifold $X$, an everywhere positive twisted 
$\dim(X)$-form is well-defined, and we call such a twisted form a \emph{volume density form}.
In particular, when $\dim(X)=2$, we call it an \emph{area density form}.
We can think of a volume density form as a natural generalization of a volume form since when $M$ is orientable, a volume density form is nothing but a volume form.

Let $\omega$ be a volume density form on a compact, connected manifold $X$ with or without boundary. 
We write $\Diff_\omega(X)$ for the group of smooth diffeomorphisms $g$, that preserve $\omega$, namely, $g^*\omega=\omega$. 
We call each element of $\Diff_\omega(X)$ a \emph{volume-preserving diffeomorphism} (or area-preserving diffeomorphism, if $\dim(X)=2$) or \emph{$\omega$-preserving diffeomorphism} on $X$.
Also, we denote by $\Diff_\omega(X,\partial X)$ the group of elements of $\Diff_\omega(X)$, fixing the boundary pointwise and by $\Diff_\omega(X,\near \partial X)$ the group of elements of $\Diff_\omega(X)$, fixing some neighborhood of $\partial X$ pointwise. 
The identity components of $\Diff_\omega(X)$,  $\Diff_\omega(X,\partial X)$, and $\Diff_\omega(X,\near \partial X)$, equipped with the $C^\infty$-topology, are denoted by $\G(X)$, $\Gr(X)$, and $\Gn(X)$, respectively. 
Indeed, the specific choice of volume density form is not important, since Moser's theorem for volume density forms can be established (see \refthm{Moser}).

\subsection{Flux homomorphisms}\label{Sec:Flux}
Let $X$ be a compact, connected $n$-manifold with non-empty boundary, possibly non-orientable, equipped with a volume density form $\omega$.
In \cite{KimMaruyama25}, the authors observed the exactness of $\omega$, namely, there is a twisted $(n-1)$-form $\eta$ such that $d\eta=\omega$. 

From now on, we assume that $n=2$.
For $\eta \in \Omega^1(X;L)$ satisfying $d\eta = \omega$, we define a map $\Flux: \Gr(X) \to H^1(X,\partial X;L)$ by
\[
    \Flux(g) = [\eta - g^*\eta].
\]
This map $\Flux$ is called the \emph{flux homomorphism} on $\Gr(X)$.
In particular, we denote by $\Fluxn$ the restriction of $\Flux$ into $\Gn(X)$. 

\begin{lem}
    The flux homomorphism is independent of the choice of $\eta$, and is a homomorphism.
\end{lem}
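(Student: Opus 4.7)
The plan is to reduce both claims to a single \emph{key identity}: for any closed twisted $1$-form $\alpha \in \Omega^1(X;L)$ and any $h \in \Gr(X)$, the relative cohomology class $[h^*\alpha - \alpha] \in H^1(X,\partial X;L)$ vanishes. (Note that $h^*\alpha - \alpha$ is closed because $d$ commutes with $h^*$, and it vanishes upon pullback to $\partial X$ since $h \circ i = i$, so the class is well-defined.) To prove the key identity, choose a smooth isotopy $\{h_t\}_{t\in[0,1]} \subset \Gr(X)$ from $\id$ to $h$, generated by a time-dependent vector field $Y_t$; since each $h_t$ fixes $\partial X$ pointwise, $Y_t|_{\partial X} \equiv 0$. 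Cartan's formula $\mathcal{L}_{Y_t}\alpha = d(\iota_{Y_t}\alpha) + \iota_{Y_t} d\alpha$ extends verbatim to twisted forms via local trivializations of $L$; combined with $d\alpha = 0$, this yields
\[
    h^*\alpha - \alpha = \int_0^1 h_t^*(\mathcal{L}_{Y_t}\alpha)\,dt = d\!\int_0^1 h_t^*(\iota_{Y_t}\alpha)\,dt,
\]
and the primitive, being a twisted $0$-form proportional to $Y_t$ in each fiber, vanishes on $\partial X$. Hence $h^*\alpha - \alpha$ is exact in the relative complex.

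Granted the key identity, independence of $\eta$ is immediate. If $d\eta = d\eta' = \omega$, then $\alpha := \eta - \eta'$ is a closed twisted $1$-form, and
\[
    (\eta - g^*\eta) - (\eta' - g^*\eta') = \alpha - g^*\alpha
\]
represents the zero class by the key identity. Along the way one checks that each $\eta - g^*\eta$ itself defines a relative class: it is closed, since $d(\eta - g^*\eta) = \omega - g^*\omega = 0$ using $g \in \Diff_\omega(X)$, and $i^*(\eta - g^*\eta) = 0$, since $g \circ i = i$.

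The homomorphism property follows from the decomposition
\[
    \eta - (gh)^*\eta = (\eta - h^*\eta) + h^*(\eta - g^*\eta).
\]
Applying the key identity to the closed form $\eta - g^*\eta$ and the element $h \in \Gr(X)$ gives $[h^*(\eta - g^*\eta)] = [\eta - g^*\eta]$ in $H^1(X,\partial X;L)$, so $\Flux(gh) = \Flux(h) + \Flux(g)$. The main subtle point is ensuring that the isotopy $\{h_t\}$ witnessing $h \in \Gr(X)$ can be chosen inside $\Diff_\omega(X,\partial X)$, so that $Y_t$ vanishes on all of $\partial X$ (not merely at each $t_0$); this is automatic from the definition of $\Gr(X)$ as the path component of the identity in $\Diff_\omega(X,\partial X)$ under the $C^\infty$-topology, which also guarantees smooth dependence of $Y_t$ on $t$ needed to make the integrals sensible.
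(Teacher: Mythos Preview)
Your argument is correct and is essentially the paper's approach with the details unpacked: both rest on the fact that $h^*$ and $\id^*$ agree on $H^1(X,\partial X;L)$ whenever $h$ is isotopic to the identity rel $\partial X$. The paper simply invokes the general homotopy-invariance proposition for relative twisted de Rham cohomology, while you reprove the needed instance directly via the Cartan-formula chain homotopy $\int_0^1 h_t^*(\iota_{Y_t}\alpha)\,dt$; the two are the same argument at different levels of abstraction.
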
   

The proof is the same as the case of orientable symplectic manifolds, due to \refprop{homotopyInvariance}.

\subsection{$\lambda$-flux homomorphisms}\label{Subsec:lambda-flux}
Let $F$ be a compact surface with boundary, equipped with an area density  form $\omega$ such that $\omega=d\eta$.
Since $H^1(F,\partial F;L_F)\cong H_c^1(\mathring{F};L_{\mathring{F}})$ (\refprop{relToCsupp})  and $H^1(F)\cong H^1(\mathring{F})$
, it follows from the Poincar\'{e} duality of $\mathring{F}$ (see \cite[Theorem 7.8]{BottTu82}) that 
the map $H^1(F,\partial F;L) \otimes H^1(F) \to \RR$ induced by 
\[
    \Omega^1(F,\partial F;L) \times \Omega^1(F) \ni (\alpha, \beta) \mapsto \int_F \alpha \wedge \beta \in \RR
\]
is non-degenerate.
Based on the duality, we define a \emph{$\lambda$-flux homomorphism} $\Flux_{\lambda} \colon \Gr(F) \to \RR$ by
\[
    \Flux_{\lambda} (g) = \int_F (\eta - g^*\eta) \wedge \lambda
\]
for $g \in \Gr(F)$, where $\lambda \in \Omega^1(F)$ is a closed form.
Likewise, we define $\Fluxnl:\Gn(F)\to \RR$.

\subsection{Local Calabi invariant}

Let $F$ be a compact, connected surface with boundary and $U$ a contractible open subset of $F$. 
We denote by $\Gn(F)_U$ the set of elements in $\Gn(F)$, the supports of which are contained in $U$.
Since $U$ is contractible, we can choose a constant section $e$ of $L_F|_{U}$ such that  $\psi_\alpha\circ e=1$ or $\psi_\alpha\circ e=-1$ in any local trivialization $(\psi_\alpha, V_\alpha)$.

By the Poincar\'e lemma, for each $g\in \Gn(F)_U$, there is a twisted $0$-form $f_g$, compactly supported on $U$, such that $\eta-g^*\eta=df_g$.
Then, the \emph{local Calabi invariant} on $U$ with respect to $e$, $\Cal_U: \Gn(F)_U \to \RR$, is defined as 
\[
\Cal_U(g)=\int_{U} \bar{f}_g \omega
\]
where $\bar{f}_g$ is the ordinary $0$-form associated with $f_g$ with respect to $e$, given by \refconst{associatedForm}, that is, $f=\bar{f}_g\otimes e$.
The local Calabi invariant is a well-defined homomorphism, which is surjective.
Note that if $e$ is replaced with $-e$, then the sign of $\Cal_U(g)$ is changed.

\subsection{Group cohomology and Euler class}\label{Sec:pre_eulerclass}
In this subsection, we briefly recall the notion of group cohomology.

Let $G$ be a group and $A$ an abelian group.
For $n \in \ZZ_{\geq 0}$, let $C^n(G;A)$ be the set of maps from $G^n$ to $A$ and define the \emph{coboundary map} $\delta \colon C^n(G;A) \to C^{n+1}(G;A)$ by
\[
    \delta c(g_1, \cdots, g_{n+1}) = c(g_2, \cdots, g_{n+1}) + \sum_{i = 1}^{n} (-1)^i c(g_1, \cdots, g_ig_{i+1}, \cdots, g_{n+1}) + (-1)^{n+1} c(g_1, \cdots, g_n).
\]
Here, we regard $C^0(G;A) = A$ and $\delta = 0 \colon C^0(G;A) \to C^1(G;A)$.
Then, the homology $H^*(G;A)$ of the cochain complex $(C^{\bullet}(G;A), \delta)$ is called the \emph{cohomology of the group $G$ with coefficients in $A$}.
It is easily verified that the first cohomology $H^1(G;A)$ is isomorphic to the $A$-module of all homomorphisms from $G$ to $A$.

Given an exact sequence $1 \to K \to E \to G \to 1$ of groups, we have the following \emph{five-term exaxt sequence}:
\begin{align}\label{eq:five-term}
    0 \to H^1(G;A) \to H^1(E;A) \to H^1(K;A)^{E} \xrightarrow{\tau} H^2(G;A) \to H^2(E;A).
\end{align}
Here $H^1(K;A)^{E}$ is the $A$-module of $E$-conjugation invariant homomorphisms from $K$ to $A$.
The \emph{transgression map} $\tau \colon H^1(K;A)^{E} \to H^2(G;A)$ in \eqref{eq:five-term} is given as follows.
\begin{prop}[{see \cite[(1.6.6)Proposition]{MR2392026}}]\label{Prop:transgression_general}
    Let $1 \to K \to E \xrightarrow{p} G \to 1$ be an exact sequence of groups.
    For an $E$-conjugation invariant homomorphism $x \colon K \to A$, the cohomology class $\tau(x)$ is given as follows:
    There exist a map $y \colon E \to A$ and a (uniequly determined) $2$-cocycle $c_y \in C^2(G;A)$ satisfying 
    \begin{align}\label{eq:transgression_formula_general}
        & y(k) = x(k) \nonumber \\
        & \delta y(e_1, e_2) = c_y(p(e_1), p(e_2))
    \end{align}
    for every $k \in K$ and $e_1, e_2 \in E$.
    For such $y$ and $c_y$, the equality $\tau(x) = [c_y]$ holds.
\end{prop}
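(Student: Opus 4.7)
The plan is to make the two equations in \eqref{eq:transgression_formula_general} explicit via a set-theoretic section of $p$, and then identify the resulting $2$-cocycle with a standard representative of the transgression $\tau(x)$ coming from the five-term exact sequence.

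First, I would fix a set-theoretic section $s \colon G \to E$ of $p$ normalized by $s(1_G) = 1_E$. Since $K = \ker(p)$ is normal in $E$, every element $e \in E$ admits a unique decomposition $e = k \cdot s(p(e))$ with $k \in K$. I would define the $1$-cochain $y \in C^1(E;A)$ by $y(k \cdot s(g)) = x(k)$. Setting $g = 1_G$ shows that the restriction of $y$ to $K$ coincides with $x$, which verifies the first equation in \eqref{eq:transgression_formula_general}.

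Next, I would compute $\delta y$ and show that it factors through $p \times p$. For $e_i = k_i\, s(g_i)$ with $i = 1,2$, normality of $K$ gives $k_2' := s(g_1) k_2 s(g_1)^{-1} \in K$, and writing $s(g_1) s(g_2) = c_s(g_1, g_2)\, s(g_1 g_2)$ for the factor set $c_s \colon G \times G \to K$ of the extension, I obtain $e_1 e_2 = k_1 k_2' c_s(g_1, g_2) \cdot s(g_1 g_2)$. Using that $x$ is a homomorphism together with its $E$-conjugation invariance (which kills the distinction between $k_2'$ and $k_2$), a direct calculation yields $\delta y(e_1, e_2) = -x\bigl(c_s(g_1, g_2)\bigr)$. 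This value depends only on $p(e_1)$ and $p(e_2)$, so setting $c_y(g_1, g_2) := -x\bigl(c_s(g_1,g_2)\bigr)$ gives the desired descended $2$-cochain on $G$; its uniqueness is forced by surjectivity of $p$, and the cocycle condition $\delta c_y = 0$ is inherited from $\delta(\delta y) = 0$ together with surjectivity.

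Finally, to conclude $[c_y] = \tau(x)$, I would invoke the standard description of the connecting map in the five-term exact sequence: $\tau$ is computed precisely by lifting a $1$-cocycle representative of $x$ on $K$ to a $1$-cochain on $E$ and then descending its coboundary to a $2$-cocycle on $G$, which is exactly the construction above (cf.\ \cite{MR2392026}). The main obstacle is not mathematical content but careful bookkeeping of conventions: one must verify that the sign and the ordering in the factor set $c_s$ are compatible with the coboundary formula fixed in \refsec{pre_eulerclass}, so that $c_y$ represents $\tau(x)$ itself rather than its negative, and one must trace the definition of $\tau$ in the cited reference to confirm that the lift-and-descend recipe indeed yields this cohomology class.
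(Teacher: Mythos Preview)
The paper does not supply its own proof of this proposition: it is stated with the attribution ``see \cite[(1.6.6)Proposition]{MR2392026}'' and used as a black box, followed only by a remark giving a sufficient condition for $\delta y$ to descend. So there is no in-paper argument to compare against.

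Your argument is the standard one and is correct. The construction $y(k\cdot s(g)) = x(k)$ via a normalized section, the computation $\delta y(e_1,e_2) = -x(c_s(g_1,g_2))$ using $E$-invariance of $x$, and the identification of this descended cocycle with the transgression are exactly the usual ``lift--take coboundary--descend'' description of $\tau$. Your caveat about sign conventions is well placed: that is the only point where anything can go wrong, and it depends on the convention for $\tau$ in the cited reference rather than on anything internal to this paper.
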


\begin{rmk/}
    If a map $y \colon E \to A$ satisfies $y(ek) = y(e) + y(k) = y(ke)$ for every $k \in K$ and $e \in E$, then the coboundary $\delta y$ descends to a cocycle $c_y$ on $G$.
    In particular, this $y$ and the induced $c_y$ satisfy \eqref{eq:transgression_formula_general}.
\end{rmk/}

If $\tau(x) = z$ holds, then the class $x \in H^1(K;A)^E$ is said to \emph{transgress to $z \in H^2(G;A)$ with respect to an exact sequence $1 \to K \to E \to G \to 1$}.

\begin{ex}\label{Exa:matsumoto_cocycle}
    Let $T \colon \RR \to \RR$ be the translation by one and $\tDiff_0(S^1)$ the group of diffeomorphisms of $\RR$, that commute with $T$.
    Then, $\tDiff_0(S^1)$ gives rise to an exact sequence $0 \to \ZZ \to \tDiff_0(S^1) \to \Diff_0(S^1) \to 1$.
    Here, we regard $S^1$ with $\RR/\ZZ$, and $\ZZ$ with the subgroup $\langle T \rangle$ of $\tDiff_0(S^1)$.
    Then the inclusion homomorphism $i \colon \ZZ \to \RR$ and the Poincar\'{e} translation number $\trot \colon \tDiff_0(S^1) \to \RR$ satisfy the assumption in Proposition \ref{Prop:transgression_general}.
    Hence we have $\tau(i) = [c_{\trot}]$.
    On the other hand, the negative of $c_{\trot}$ is just the Matsumoto cocycle (\cite{MR848896}), which represents the (real) Euler class $\mathrm{eu} \in H^2(\Diff_0(S^1);\RR)$.
    Hence, we have $-\tau(i) = \mathrm{eu}$.
\end{ex}

\section{Transgression of $\Flux$}\label{Sec:transgression}
In this section, we prove \refthm{transgression_flux}. 
We first observe that for any compact, connected surface $F$ with non-empty boundary, the boundary restriction map  $\G(F)\to \Diff_0(\partial F)$ is surjective and 
\[
1\to \Gr(F)\to G(F)\to \Diff_0(\partial F) \to 1
\]
is exact.
Then, a five-term exact sequence follows from (\ref{eq:five-term}) with $A=\RR$.
To compute the formula \eqref{eq:transgression_formula_general} for $\Flux$ and $\eu$, in \refsec{euFormula}, we provide an explicit representative of the Euler class. By using this, we show \refthm{transgression_flux}.

\subsection{Surjectivity of the boundary restriction map  $\G(F)\to \Diff_0(\partial F)$}
By modifying \cite[Lemma~(2.2)]{Tsuboi00}, we can observe the following surjectivity:
\begin{prop}\label{Prop:boundarySurj}
    Let $F$ be a compact, connected surface with boundary, equipped with an area density  form $\omega$. 
    Then, the homomorphism $\G(F)\to \Diff_0(\partial F) $ defined as 
    $g\mapsto g{\restriction_{\partial F}}$ is surjective.
\end{prop}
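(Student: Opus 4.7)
The plan is to reduce the problem to a local construction on a collar neighborhood of $\partial F$ and to realize the required extension as the time-$1$ map of a compactly supported Hamiltonian flow whose boundary restriction traces the prescribed isotopy.

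First I would choose a smooth isotopy $\{f_s\}_{s\in[0,1]}$ in $\Diff_0(\partial F)$ from $\id$ to $f$, and let $X_s$ denote the associated time-dependent vector field on $\partial F$. Since every boundary circle is two-sided in $F$, the collar neighborhood theorem yields an embedding $\partial F \times [0,\epsilon) \hookrightarrow F$ whose image $U$ is an orientable submanifold of $F$. After fixing an orientation of $U$, the twisted $2$-form $\omega|_U$ identifies with an ordinary area form, and I would invoke \refthm{Moser} on each component of $U$ to introduce coordinates $(\theta, t)$ in which $\omega|_U = d\theta \wedge dt$.

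Writing $X_s = v_s(\theta)\,\partial_\theta$ on each component of $\partial F$ and picking a smooth bump function $\psi\colon[0,\epsilon)\to[0,1]$ with $\psi\equiv 1$ near $t=0$ and $\psi\equiv 0$ on $[\epsilon/2,\epsilon)$, I would define the time-dependent Hamiltonian
\[
H_s(\theta,t) \;:=\; t\,\psi(t)\,v_s(\theta)
\]
on $U$, extended by zero elsewhere. The corresponding Hamiltonian vector field
\[
Y_s \;=\; (\partial_t H_s)\,\partial_\theta \;-\; (\partial_\theta H_s)\,\partial_t
\]
is automatically $\omega$-divergence-free, is supported in $\partial F \times [0,\epsilon/2)$, and satisfies $Y_s|_{\partial F} = X_s$ because $\psi(0)=1$ while $t\psi(t)$ vanishes at $t=0$. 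Integrating $\{Y_s\}$ produces an isotopy $\{g_s\}$ in $\G(F)$ with $g_s|_{\partial F} = f_s$, and $g := g_1 \in \G(F)$ furnishes the required preimage of $f$.

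The only technical point I expect is the normalization $\omega|_U = d\theta\wedge dt$ on the collar; however, because $U$ is orientable and the density form restricts to an honest area form there, this reduces to \refthm{Moser} applied to the disjoint union of annular pieces, so it should not constitute a real obstacle. The remainder of the argument is essentially the Hamiltonian extension trick used in Tsuboi's original proof of the corresponding statement for $D^2$, and working with a Hamiltonian (rather than a general area-preserving) vector field is precisely what allows the support to be cut off by multiplying the potential by a bump function without destroying the divergence-free property.
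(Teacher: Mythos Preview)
Your proposal is correct and follows essentially the same approach as the paper: both extend the generating vector field of the boundary isotopy to a Hamiltonian vector field supported in an orientable collar, using Moser's theorem to normalize the area form there. The only difference is presentational---the paper writes the primitive as a twisted $0$-form $\alpha$ and recovers the vector field via $d\alpha = i(Y)\omega$, whereas you write the Hamiltonian $H_s = t\psi(t)v_s(\theta)$ and its Hamiltonian vector field directly; these are the same construction once one unwinds the notation.
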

\begin{proof}
    It is enough to show that given a vector field $\xi$ on $\partial F$, there is a divergence free vector field on $F$ tangent to $\partial F$, that is an extension of $\xi$. 
    Note that $\partial F$ is a disjoint union of copies of the circles and it is orientable. Hence, a collar neighborhood of $\partial F$ is a disjoint union of closed annulus and it is  orientable. 
    
    Fix an area density  form $\omega_\partial$ on $\partial F$. 
    There is a parameterization $i:\partial F \times [0,1]\to F$ of a collar neighborhood of $\partial F$ such that $i(F\times \{0\})=\partial F$. Indeed, by \refthm{Moser}, we may identify $i(F\times [0,1])$ with $F\times [0,1]$ and assume that the area density  form $\omega$ on  $F\times [0,1]$ takes the form  $\omega_\partial \wedge ds$ where $s$ denotes the coordinate on $[0,1]$. 

    We define $\dv(\xi):\partial F \to \RR$ as 
    $\dv(\xi)\omega_\partial=\cL_\xi \omega_\partial$.
    Now, we set a vector field $X$ on $\partial F\times [0,1]$ such that
    \[X(t,s)=\xi(t)-s\cdot\dv(\xi)(t) \frac{\partial}{\partial s}\] for $(t,s)\in \partial F\times [0,1]$.
    Note that $X(t,0)=\xi(t)$.
    Also, we have that 
    \[
    \cL_X \omega=\cL_{\xi}(\omega_\partial \wedge ds)- \cL_{s\cdot\dv(\xi)(t) \partial/\partial s}(\omega_\partial \wedge ds)= \dv(\xi) \omega_\partial\wedge ds- d(s\cdot \dv(\xi)\omega_\partial)=0
    \]
    and $X$ is a divergence free vector field on $\partial F \times [0,1]$. 

    Now, we put a twisted $0$-form $\alpha$ on $F$, supported on $\partial F \times [0,1]$, as    \[\alpha(t,s)=s\mu(t,s) \cdot (i(\xi) \omega_\partial)(t)\]
    for $(t,s)\in \partial F\times [0,1]$, where $\mu$ is a smooth function on $\partial F\times [0,1]$ that is $0$ on a neighborhood of $\partial F \times \{1\}$ and $1$ on  a neighborhood of $\partial F \times \{0\}$.
    Note that 
    \[\dv(\xi)\omega_\partial= \cL_\xi \omega_\partial= i(\xi)d\omega_\partial+d(i(\xi)\omega_\partial)=d(i(\xi)\omega_\partial)\]
    since $d\omega_\partial=0$.
    Therefore, on some neighborhood of $\partial F\times \{0\}$, we have that 
    \begin{align*}
            d\alpha&=d(s(i(\xi)\omega_\partial)(t))\\
            &=(i(\xi)\omega_\partial)ds+s\cdot d(i(\xi)\omega_\partial)\\
            &=(i(\xi)\omega_\partial)ds+s\cdot \dv(\xi) \omega_\partial\\
            &=i(X)(\omega_\partial \wedge ds)\\
            &=i(X)\omega.
    \end{align*}
    Thus, the vector field $Y$, defined by $d\alpha=i(Y)\omega$, is the desired vector field on $F$.    
\end{proof}

\subsection{Euler cocycles}\label{Sec:euFormula}

Here, we provide the explicit representative $\chi$ of the Euler class, appeared in \refthm{transgression_flux}.
Let $\phi(\theta)d\theta$ and $\psi(\theta)d\theta$ be $1$-forms on $S^1$.
For $\gamma \in \Diff_0(S^1)$, the difference $\phi(\theta)d\theta - \gamma^*(\phi(\theta)d\theta)$ 
is an exact form since $\Diff_0(S^1)$ is path-connected.
Let $\alpha_{\gamma}$ be a $0$-form on $S^1$ satisfying $d\alpha_{\gamma} = \phi(\theta)d\theta - \gamma^*(\phi(\theta)d\theta)$.
Define $\chi \in C^2(\Diff_0(S^1);\RR)$ by 
\begin{align}\label{eq:Euler_cocycle}
    \chi(\gamma_1, \gamma_2) = \int_{S^1} (\alpha_{\gamma_1} - \gamma_2^* \alpha_{\gamma_1})\psi(\theta)d\theta.
\end{align}
It is obvious that $\chi$ does not depend on the choice of $\alpha_{\gamma_1}$.

\begin{lem}
    The cochain $\chi$ is a cocycle and satisfies $[\chi] = AB\cdot \mathrm{eu}$, where $A = \int_{S^1} \phi(\theta)d\theta$ and $B = \int_{S^1} \psi(\theta)d\theta$.
\end{lem}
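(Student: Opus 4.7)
The plan is to compute $[\chi]$ via transgression with respect to the central extension $0 \to \ZZ \to \tDiff_0(S^1) \to \Diff_0(S^1) \to 1$ from \refexa{matsumoto_cocycle}. Concretely, I will produce a $1$-cochain $y$ on $\tDiff_0(S^1)$ with $y|_{\ZZ} = AB \cdot i$ whose coboundary descends along $p$ to $-\chi$; then \refprop{transgression_general} simultaneously yields that $-\chi$ (hence $\chi$) is a $2$-cocycle on $\Diff_0(S^1)$ and that $\tau(AB \cdot i) = [-\chi]$. Combining this with $\tau(i) = -\mathrm{eu}$ from \refexa{matsumoto_cocycle} then gives $[\chi] = AB \cdot \mathrm{eu}$.

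The key is to work with a primitive lifted to $\RR$. Set $\Phi(x) := \int_0^x \phi(\theta)\, d\theta$, so $\Phi(x+1) = \Phi(x) + A$, and for each $\tilde\gamma \in \tDiff_0(S^1)$ define
\[
\hat\alpha_{\tilde\gamma}(x) := \Phi(x) - \Phi(\tilde\gamma(x)), \qquad y(\tilde\gamma) := -\int_{S^1} \hat\alpha_{\tilde\gamma}(\theta)\, \psi(\theta)\, d\theta.
\]
A direct check shows $d\hat\alpha_{\tilde\gamma} = \phi\, d\theta - \tilde\gamma^*(\phi\, d\theta)$, and the relation $\tilde\gamma(x+1) = \tilde\gamma(x) + 1$ makes $\hat\alpha_{\tilde\gamma}$ periodic, so it descends to a function on $S^1$. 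Replacing $\tilde\gamma$ by $T\tilde\gamma$ shifts $\hat\alpha_{\tilde\gamma}$ by the constant $-A$, which gives $y(T^n) = nAB$ and therefore $y|_{\ZZ} = AB \cdot i$. The composition law $\hat\alpha_{\tilde\gamma_1 \tilde\gamma_2} = \hat\alpha_{\tilde\gamma_2} + \tilde\gamma_2^* \hat\alpha_{\tilde\gamma_1}$ telescopes exactly (with no stray constant), and substituting into $\delta y$ yields
\[
\delta y(\tilde\gamma_1, \tilde\gamma_2) = -\int_{S^1} \bigl(\hat\alpha_{\tilde\gamma_1} - \tilde\gamma_2^* \hat\alpha_{\tilde\gamma_1}\bigr) \psi\, d\theta = -\chi(\gamma_1, \gamma_2),
\]
where the second equality uses that $\hat\alpha_{\tilde\gamma_1}$ and $\alpha_{\gamma_1}$ differ only by a constant, which cancels in the difference.

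The main subtlety is selecting a primitive on $\RR$ that simultaneously (i) descends to a periodic function so $y$ is well-defined, (ii) depends on the lift through an explicit constant so that $y|_{\ZZ}$ is nontrivial, and (iii) obeys a strict composition law so that $\delta y$ descends to $\Diff_0(S^1)$ without a correction term. A naively normalized primitive (for instance, imposing $\hat\alpha_{\tilde\gamma}(0) = 0$) kills the lift-dependence and forces $y|_{\ZZ} = 0$, producing only the zero transgression class; the telescoping choice $\Phi - \Phi \circ \tilde\gamma$ is what makes all three conditions hold at once.
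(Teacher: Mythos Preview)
Your proof is correct and follows essentially the same route as the paper: the same primitive $\Phi$, the same periodic function $\hat\alpha_{\tilde\gamma}=\Phi-\Phi\circ\tilde\gamma$ (the paper calls a constant shift of it $\widetilde\beta_{\gamma_1}$), and the same transgression argument via \refprop{transgression_general} and \refexa{matsumoto_cocycle}. The only differences are cosmetic: your $y$ is the paper's $-F$, and you organize the computation of $\delta y$ via the composition law $\hat\alpha_{\tilde\gamma_1\tilde\gamma_2}=\hat\alpha_{\tilde\gamma_2}+\tilde\gamma_2^*\hat\alpha_{\tilde\gamma_1}$ rather than expanding $\delta F$ directly.
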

\begin{proof}
    We define $\Phi \colon \RR \to \RR$ and $F \colon \tDiff_0(S^1) \to \RR$ by 
    \[
        \Phi(\theta_0) = \int_0^{\theta_0} \phi(\theta)d\theta,
    \]
    and
    \[
        F(\tgamma) = \int_{0}^{1} \big(\Phi(\theta) - \Phi(\tgamma(\theta)) \big) \psi(\theta) d\theta.
    \]
    Say that $-AB\cdot i:\ZZ \to \RR$ is a homomorphism defined as $1\mapsto -AB$.
    Since $\Phi(\theta + n) = \Phi(\theta) + nA$ for every $n \in \ZZ$ and $\theta \in \RR$, the maps $-AB\cdot i :\ZZ \to \RR$ and $F \colon \tDiff_0(S^1) \to \RR$ satisfy the assumption in  \refprop{transgression_general}.
    Hence, the cocycle $c_F$ defined by $p^*c_{F} = \delta F$ satisfies $[c_{F}] = -AB \cdot \tau(i)$, where $p:\tDiff_0(S^1)\to \Diff_0(S^1)$ is the projection.
    Together with \refexa{matsumoto_cocycle}, we obtain $[c_{F}] = AB \cdot \mathrm{eu}$.
    
    We now prove $c_{F} = \chi$.
    For $\gamma_1, \gamma_2 \in \Diff_0(S^1)$, take their lifts $\tgamma_1, \tgamma_2 \in \tDiff_0(S^1)$.
    Then we have
    \begin{align*}
        c_F(\gamma_1, \gamma_2) &= \delta F(\tgamma_1, \tgamma_2) \\
         &= \int_0^1 \left[ \big(\Phi(\theta) - \Phi\circ \tgamma_1(\theta)\big)+ \big(\Phi(\theta) - \Phi\circ \tgamma_2(\theta)\big)- \big(\Phi(\theta) - \Phi\circ(\tgamma_1\tgamma_2)(\theta) \big)\right]\psi(\theta)d\theta \\
        &= \int_0^1 \big[\Phi(\theta) - \Phi\circ \tgamma_1(\theta) - \big(\Phi\circ \tgamma_2(\theta) - \Phi\circ(\tgamma_1\tgamma_2)(\theta) \big) \big]\psi(\theta)d\theta.
    \end{align*}
    Consider the periodic function $\widetilde{\beta}_{\gamma_1} \colon \RR \to \RR$ defined by 
    \[
        \widetilde{\beta}_{\gamma_1}(\theta) = \Phi(\theta) - \Phi(\tgamma_1(\theta)) + \Phi(\tgamma_1(0)).
    \]
    This function $\widetilde{\beta}_{\gamma_1}$ does not depend on the choice of lift $\tgamma_1$, and descends to a $0$-form $\beta_{\gamma_1}$ on $S^1$.
    Moreover, $\beta_{\gamma_1}$ satisfies $d\beta_{\gamma_1} = \phi(\theta)d\theta - \gamma_1^*(\phi(\theta)d\theta)$.
    Hence, we obtain
    \begin{align*}
        \chi(\gamma_1, \gamma_2) &= \int_{S^1} (\beta_{\gamma_1} - \gamma_2^* \beta_{\gamma_1})\psi(\theta)d\theta\\ 
        &= \int_0^1 \big(\widetilde{\beta}_{\gamma_1} - \tgamma_2^* \widetilde{\beta}_{\gamma_1}\big)\psi(\theta)d\theta\\
        &=\int_0^1 \big[\Phi(\theta) - \Phi\circ \tgamma_1(\theta) - \big(\Phi\circ \tgamma_2(\theta) - \Phi\circ(\tgamma_1\tgamma_2)(\theta) \big) \big]\psi(\theta)d\theta.
    \end{align*}
    This completes the proof.
\end{proof}

\subsection{Transgression of $\Flux$}

Let $N$ be a compact, connected, non-orientable surface with one boundary component, equipped with an area density  form $\omega=d\eta$ and let $\lambda$ be a closed $1$-form in $N$. 
Let $i : S^1 = \partial N \hookrightarrow N$ be the inclusion with the canonical orientation.
Fix a global constant section $e$ of $L_{S^1}=L_N|_{S^1}$ such that either $e=1$ or $e=-1$ for any trivialization of $L_{S^1}$.
By \refconst{associatedForm}, there is a corresponding ordinary $1$-form $\mu$ in $S^1$ such that $\mu \otimes e=i^*\eta$.
For simplicity, we think of $i^*\eta$ as $\mu$.  
Set $A_{\omega} = \int_N \omega = \int_{S^1} i^*\eta$ and $B_{\lambda} = \int_{S^1} i^*\lambda$.

Let $F_{\lambda} \colon \G(N) \to \RR$ be a map, defined by  
\[
    F_{\lambda}(h) = \int_{N} (\eta - h^*\eta) \wedge \lambda
\]
for $h \in \G(N)$.
This $F_{\lambda}$ gives rise to the following formula, which is just \refthm{transgression_flux}:

\begin{thm}
    The following hold:
    \begin{align*}
        &\Flux_{\lambda}(g) = F_{\lambda}(g) \\
        &\delta F_{\lambda} (h_1, h_2) = \chi(p(h_1), p(h_2))
    \end{align*}
    for $g \in \Gr(N)$ and $h_1, h_2 \in \G(N)$.
    Here, $p: \G(N)\to \Diff_0(S^1)$ is the boundary restriction map, and  $\chi$ is the cocycle, defined in \eqref{eq:Euler_cocycle}, for $\phi(\theta)d\theta = i^*\eta$ and $\psi(\theta)d\theta = i^*\lambda$.
    In particular, the $\lambda$-flux homomorphism transgresses to $A_{\omega}B_{\lambda}\cdot \mathrm{eu}$ with respect to the exact sequence 
    \[
        1 \to \Gr(N) \xrightarrow{i} \G(N) \xrightarrow{p} \Diff_0(S^1) \to 1.
    \]
\end{thm}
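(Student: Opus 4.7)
The plan is to verify the two displayed identities by direct computation and then invoke \refprop{transgression_general} together with the Euler-cocycle computation $[\chi] = A_\omega B_\lambda \cdot \eu$ from \refsec{euFormula} to obtain the transgression statement. The first identity $\Flux_{\lambda}(g) = F_{\lambda}(g)$ for $g \in \Gr(N)$ is immediate from the definitions: since $g$ fixes $\partial N$ pointwise, $\eta - g^*\eta$ lies in $\Omega^1(N,\partial N;L)$ and represents $\Flux(g)$, so pairing it with $\lambda$ yields both $\Flux_{\lambda}(g)$ and $F_{\lambda}(g)$.

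For the coboundary identity, set $\beta_h := \eta - h^*\eta$. Expanding $\delta F_{\lambda}(h_1,h_2) = F_{\lambda}(h_2) - F_{\lambda}(h_1 h_2) + F_{\lambda}(h_1)$ and using $(h_1 h_2)^*\eta = h_2^* h_1^*\eta$ collapses the $\eta$-terms to
\[
    \delta F_{\lambda}(h_1,h_2) = \int_N (\beta_{h_1} - h_2^*\beta_{h_1}) \wedge \lambda,
\]
and $\beta_{h_1}$ is closed because $h_1$ preserves $\omega=d\eta$. Picking an isotopy $\{h_{2,t}\}_{t\in[0,1]}$ in $\G(N)$ from $\id$ to $h_2$ with generator $X_t$ (necessarily tangent to $\partial N$), Cartan's magic formula and $d\beta_{h_1}=0$ give
\[
    h_2^*\beta_{h_1} - \beta_{h_1} = d\sigma, \qquad \sigma := \int_0^1 h_{2,t}^*\big(i_{X_t}\beta_{h_1}\big)\,dt \in \Omega^0(N;L).
\]
Wedging with the closed form $\lambda$, using $d\lambda = 0$, and applying Stokes' theorem for twisted forms (from Appendix~\ref{Appendix}) reduces the integral to $-\int_{\partial N} \sigma|_{\partial N} \wedge i^*\lambda$. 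To identify this with $\chi(p(h_1),p(h_2))$, use the fixed constant section $e$ of $L_N|_{\partial N}$ to pass from twisted to ordinary forms on $\partial N$; then $\beta_{h_1}|_{\partial N}$ corresponds to $d\alpha_{p(h_1)}$, where $\alpha_{p(h_1)}$ is the primitive used in the definition of $\chi$. With $Y_t := X_t|_{\partial N}$ generating $p(h_{2,t})$ and the identity $i_{Y_t}\,d\alpha_{p(h_1)} = \cL_{Y_t}\alpha_{p(h_1)}$ (Cartan plus the fact that $\alpha_{p(h_1)}$ is a $0$-form), a direct calculation gives
\[
    \sigma|_{\partial N} = \int_0^1 p(h_{2,t})^*\big(\cL_{Y_t}\alpha_{p(h_1)}\big)\,dt = \int_0^1 \frac{d}{dt}\big(p(h_{2,t})^*\alpha_{p(h_1)}\big)\,dt = p(h_2)^*\alpha_{p(h_1)} - \alpha_{p(h_1)},
\]
and plugging in with $\psi(\theta)\,d\theta = i^*\lambda$ yields $\delta F_{\lambda}(h_1,h_2) = \int_{S^1}(\alpha_{p(h_1)} - p(h_2)^*\alpha_{p(h_1)})\psi(\theta)\,d\theta = \chi(p(h_1),p(h_2))$.

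Given the two identities, \refprop{transgression_general} applied to the exact sequence $1\to\Gr(N)\to\G(N)\xrightarrow{p}\Diff_0(S^1)\to 1$ from \refprop{boundarySurj}, together with the Euler-cocycle lemma of \refsec{euFormula}, yields $\tau(\Flux_{\lambda}) = [\chi] = A_\omega B_\lambda \cdot \eu$. The main bookkeeping hurdle I expect is with twisted-form conventions: because $L_N$ is non-trivial on the non-orientable surface $N$, one must carefully mediate between twisted primitives on $N$ and the ordinary primitives $\alpha_{p(h_1)}$ on the orientable boundary $\partial N$ through the chosen section $e$, and track orientation signs in Stokes' theorem and in the commutator expansion of $\delta F_{\lambda}$, so that the boundary terms match $\chi$ on the nose rather than up to a constant; the isotopy-derived primitive $\sigma$ is what makes the match exact.
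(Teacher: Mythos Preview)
Your argument is correct and reaches the same conclusion, but it diverges from the paper's route at the key step. After arriving at
\[
    \delta F_{\lambda}(h_1,h_2) = \int_N (\beta_{h_1} - h_2^*\beta_{h_1}) \wedge \lambda,
\]
the paper performs a change of variables to rewrite this as $\int_N \beta_{h_1} \wedge (\lambda - (h_2^{-1})^*\lambda)$, then takes an \emph{ordinary} primitive $\beta_{h_2}\in\Omega^0(N)$ of the exact ordinary form $\lambda - (h_2^{-1})^*\lambda$; one Stokes reduces to $-\int_{S^1} d\alpha_{\gamma_1}\wedge i^*\beta_{h_2}$, and a second integration by parts on $S^1$ yields $\chi(\gamma_1,\gamma_2)$. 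You instead keep the $h_2$ on the twisted side, build an explicit \emph{twisted} primitive $\sigma\in\Omega^0(N;L)$ of $h_2^*\beta_{h_1}-\beta_{h_1}$ from an isotopy, apply Stokes once, and then identify $\sigma|_{\partial N}$ with $p(h_2)^*\alpha_{p(h_1)}-\alpha_{p(h_1)}$ by integrating the flow on the boundary. The paper's path has the advantage that the primitive it needs is an ordinary $0$-form (since $\lambda$ is ordinary), so the twisted bookkeeping you flag in your last paragraph never arises, and no explicit isotopy formulas are required; your path is a bit more hands-on but compresses the two Stokes steps into one and makes the dependence on the isotopy transparent.
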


\begin{proof}
    The first equality is immediate from the definitions of $\Flux_{\lambda}$ and $F_{\lambda}$.
    Let $h_1,h_2$ be elements of $\G(N)$.
    Then, we have
    \[
        \delta F_{\lambda}(h_1,h_2) = \int_N \big(\eta - h_1^*\eta - h_2^*(\eta - h_1^* \eta) \big) \wedge \lambda = \int_N (\eta - h_1^*\eta) \wedge (\lambda - (h_2^{-1})^*\lambda).
    \]
    Since $\lambda$ is closed and $h_2$ is isotopic to the identity, there exists $\beta_{h_2} \in \Omega^0(N)$ such that $d\beta_{h_2} = \lambda - (h_2^{-1})^*\lambda$.
    Then, the Stokes formula implies
    \[
        \delta F_{\lambda}(h_1, h_2) = -\int_{\partial N} (\eta - h_1^*\eta) \wedge \beta_{h_2} = -\int_{S^1} (i^*\eta - i^*h_1^*\eta) \wedge i^*\beta_{h_2}.
    \]
    Set $\phi(\theta)d\theta = i^* \eta$, $\psi(\theta)d\theta = i^*\lambda$, and $\gamma_j = p(h_j)$ for $j = 1,2$.
    Then, we have
    \[
        \delta F_{\lambda} (h_1, h_2) = -\int_{S^1} \big(\phi(\theta)d\theta - \gamma_1^*(\phi(\theta)d\theta)\big) \wedge i^*\beta_{h_2}.
    \]
    Recall from \refsec{pre_eulerclass} that $\alpha_{\gamma_1}$ is a $0$-form on $S^1$ satisfying $d\alpha_{\gamma_1} = \phi(\theta)d\theta - \gamma_1^*(\phi(\theta)d\theta)$.
    Since
    \[
        d(\alpha_{\gamma_1} \wedge i^*\beta_{h_2}) = \big(\phi(\theta)d\theta - \gamma_1^*(\phi(\theta)d\theta)\big) \wedge i^*\beta_{h_2} + \alpha_{\gamma_1} \wedge \big(\psi(\theta)d\theta - (\gamma_2^{-1})^*(\psi(\theta)d\theta)\big),
    \]
    the Stokes formula implies
    \[
        \delta F_{\lambda} (h_1, h_2) = \int_{S^1} \alpha_{\gamma_1} \wedge \big(\psi(\theta)d\theta - (\gamma_2^{-1})^*(\psi(\theta)d\theta)\big) = \int_{S^1} (\alpha_{\gamma_1} - \gamma_2^*\alpha_{\gamma_1}) \psi(\theta)d\theta.
    \]
    The last term is just $\chi(\gamma_1, \gamma_2)$. This completes the proof.
\end{proof}

\section{Swept-area and $\Fluxn$}\label{Sec:swept_area}

In this section, we characterize the $\lambda$-flux homeomorphism $\Fluxnl$ in terms of the swept areas of arcs associated with $\lambda$. 
First, we define the swept area in surfaces, possibly non-orientable, as follows:

\begin{defn}\label{Defn:sweptArea}
    Let $F$ be a compact, connected surface with boundary, equipped with an area density  form $\omega$.
    Let $\varphi \in \Gn(F)$ and $\gamma:[0,1]\to F$ a proper embedding, that is not boundary-parallel and is oriented by $\gamma^\flat$. 
    Choose a smooth homotopy $h:[0,1]\times [0,1]\to F$ from $\gamma$ to $\varphi\circ \gamma$, oriented by $h^\flat$ with $h^\flat {\restriction_{\{0\}\times [0,1]}}=\gamma^\flat$.
    Then, the \emph{swept-area of $\varphi$ with respect to $\gamma$} is  
    $$\cO_\gamma(\varphi)=-\int_{[0,1]\times [0,1]}h^*\omega.$$
\end{defn}
The following proposition is well known when $F$ is orientable:
\begin{prop}
    The swept-area $\cO_\gamma(\varphi)$ does not depend on the choice of $h$. 
    Moreover, for any proper arc $\sigma$ with an orientation $\sigma^\flat$, isotopic to $\gamma$ with $\gamma^\flat$, that is, there is a smooth homotopy $h:[0,1]\times [0,1]\to F$ from $\gamma$ to $\sigma$, oriented by $h^\flat$ with $h^\flat {\restriction_{\{0\}\times [0,1]}}=\gamma^\flat$ and $h^\flat {\restriction_{\{1\}\times [0,1]}}=\sigma^\flat$, then  $\cO_\gamma=\cO_\sigma$.
\end{prop}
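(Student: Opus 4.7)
The plan is to express the swept area as a cohomological pairing, thereby eliminating the dependence on the specific homotopy $h$ and on the specific arc within its isotopy class.

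First, I would invoke the exactness of the area density (noted in \refsec{Flux}) to write $\omega = d\eta$ for some twisted $1$-form $\eta$, and apply the twisted Stokes' theorem:
\[
-\int_{[0,1]\times[0,1]} h^*\omega = -\int_{\partial([0,1]\times[0,1])} h^*\eta.
\]
For the canonical choice $h(t,s) = \varphi_t(\gamma(s))$ coming from an isotopy $\{\varphi_t\}$ in $\Gn(F)$, the bottom and top edges $[0,1]\times\{0,1\}$ are constant paths at $\gamma(0)$ and $\gamma(1)$ (since $\varphi \in \Gn(F)$ fixes a neighborhood of $\partial F$ pointwise), so the pullback of a $1$-form vanishes on them, and the remaining two edges trace $\gamma$ and $\varphi\circ\gamma$ with Stokes-induced orientations, giving
\[
\cO_\gamma(\varphi) = \int_\gamma (\eta - \varphi^*\eta).
\]
This formula is manifestly independent of the isotopy $\{\varphi_t\}$. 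To upgrade this to independence from an arbitrary admissible $h$, I would compare an arbitrary $h$ to the canonical one via a $3$-parameter interpolation $H : [0,1]^3 \to F$ and apply Stokes in three dimensions to $\int_{\partial[0,1]^3} H^*\omega = 0$; the four "non-end" $2$-faces of $[0,1]^3$ collapse to $1$-dimensional images in $F$ (either constant paths at $\gamma(0), \gamma(1)$ or factoring through $\gamma$ or $\varphi\circ\gamma$), so their contributions to the pullback of the $2$-form $\omega$ vanish.

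For the isotopy invariance, the key observation is that $\eta - \varphi^*\eta$ is a closed twisted $1$-form (since $\varphi^*\omega = \omega$) that vanishes on a neighborhood of $\partial F$ (since $\varphi$ is the identity there); hence it represents a class in $H^1(F, \partial F; L)$, and the pairing $\int_\gamma (\eta - \varphi^*\eta)$ depends only on the relative homology class of $\gamma$ compatibly with the orientation $\gamma^\flat$. Given an oriented isotopy $h_\alpha : [0,1]^2 \to F$ from $(\gamma,\gamma^\flat)$ to $(\sigma,\sigma^\flat)$, Stokes' theorem applied to the closed pulled-back form $h_\alpha^*(\eta - \varphi^*\eta)$ yields $\int_\sigma (\eta - \varphi^*\eta) - \int_\gamma (\eta - \varphi^*\eta) = 0$, since the two lateral boundary edges of the square map into $\partial F$, where $\eta - \varphi^*\eta$ vanishes identically.

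The main obstacle I anticipate is the careful bookkeeping of orientations and signs in the twisted setting. Because $h$ carries an orientation $h^\flat$ refining $\gamma^\flat$, every application of Stokes must track the interaction between the standard orientation of $[0,1]^2$, the orientation $h^\flat$, and the sign conventions implicit in $\gamma^\flat$ and $\sigma^\flat$. In particular, verifying the formula $\cO_\gamma(\varphi) = \int_\gamma (\eta - \varphi^*\eta)$ with the correct sign requires consistent orientation choices across all four boundary edges of the homotopy square, and an analogous check for the three-dimensional interpolation and for the isotopy square comparing $\gamma$ with $\sigma$.
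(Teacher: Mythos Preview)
The paper does not actually include a proof of this proposition; it is stated immediately after the remark that it ``is well known when $F$ is orientable,'' and no argument is given. Your approach is sound and, in fact, essentially anticipates the content of the paper's next result, \reflem{sweptArea}: there the authors compute, for the canonical homotopy $h(s,t)=\varphi_s(\gamma(t))$ coming from an isotopy $\varphi_t\in\Gn(F)$, that $\cO_\gamma(\varphi)=\int_{[0,1]}\gamma^*(\eta-\varphi^*\eta)$ via the time-dependent vector field $X_t$ and the identity $\Fluxn(\varphi)=-\int_0^1[i(X_s)\omega]\,ds$. That is your Stokes identity obtained by a different computation. Once this formula is in hand, both the independence from $h$ and the isotopy invariance follow exactly as you outline, because $\eta-\varphi^*\eta$ is closed and vanishes near $\partial F$.

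One small point on your $3$-parameter argument: to guarantee that the four lateral faces of the cube factor through $1$-dimensional images, you need the admissible homotopies $h$ to fix the endpoints $\gamma(0),\gamma(1)\in\partial F$ (which the canonical one does since each $\varphi_t\in\Gn(F)$), and you implicitly use $\pi_2(F)=0$ to build the interpolation $H$ rel boundary. Neither is difficult, but both should be stated. Your acknowledged concern about tracking signs via $h^\flat$ and $\gamma^\flat$ in the twisted Stokes formula is indeed the only genuinely delicate step.
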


To see the following duality, we can follow the computation of \cite[page~67]{BottTu82} since a tubular neighborhood of any properly embedded arc in a compact surface is homeomorphic to a trivial line bundle over the closed interval: 
\begin{prop}[Poincar\'e dual of an oriented arc]\label{Prop:PD}
    Let $F$ be a compact, connected surface with non-empty boundary and let $\gamma:[0,1]\to F$ be a proper embedding, oriented with $\gamma^\flat$.
    Write $A=\gamma([0,1])$.
    Let $N$ be the normal bundle of $A$, which is the quotient of $TF|_A$ by $TA$, and $j:N\hookrightarrow F$ an embedding onto a tubular neighborhood of $A$, that is the identity on the zero-section of $N$.
    Then, the pushforward $j_*\Phi$ of the Thom class $\Phi$ of $N$ satisfies that for any $\mu\in \Omega^1(F, \partial F;L_F)$,  
    \[
    \int_F \mu \wedge j^* \Phi =\pm\int_{[0,1]} \gamma^*\mu. 
    \]
    where $j_*\Phi$ is the extension of the pushforward of $\Phi$ by $0$.
    Here, the sign depends on $\gamma^\flat$  and  $j$.  
\end{prop}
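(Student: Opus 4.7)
The plan is to compute the left-hand side explicitly in a tubular neighborhood of $A$, following the argument sketched on \cite[page~67]{BottTu82}. Since $A=\gamma([0,1])$ is contractible, every line bundle over $A$ is trivial; in particular, both the normal bundle $N\to A$ and the restriction $L_F|_A$ are trivial. After shrinking $U=j(N)$ if necessary, I trivialize $L_F|_U$ by a nowhere-vanishing section $e$ and choose coordinates $(t,s)\in [0,1]\times(-\epsilon,\epsilon)$ on $U$ in which $\gamma(t)=(t,0)$ and the fiber of $N$ over $\gamma(t)$ is the slice $\{t\}\times(-\epsilon,\epsilon)$.

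Next, I represent the Thom class by a compactly supported bump form $\Phi=\rho(s)\,ds$ on $N$, where $\rho$ is smooth with $\int_{-\epsilon}^{\epsilon}\rho(s)\,ds=1$; the sign of $\rho$ encodes the orientation of the fibers of $N$ induced by $\gamma^\flat$ and $j$. Writing $\mu=(a(t,s)\,dt+b(t,s)\,ds)\otimes e$ on $U$, one has $\mu\wedge j_*\Phi=a(t,s)\rho(s)\,(dt\wedge ds)\otimes e$, and integrating this twisted top-form as a density --- with a sign $\varepsilon_1\in\{\pm 1\}$ comparing the coordinate orientation $dt\wedge ds$ with the trivialization $e$ --- gives
\[
\int_F \mu\wedge j_*\Phi=\varepsilon_1 \int_0^1\Big(\int_{-\epsilon}^\epsilon a(t,s)\rho(s)\,ds\Big)\,dt.
\]
Since the proposition will be applied to closed classes in $\Omega^1(F,\partial F;L_F)$ (concretely, $\mu=\eta-g^*\eta$ arising from the flux homomorphism in \refsec{Flux}), I may assume $\mu$ is closed; a standard fiber-integration argument as in \cite[Proposition~6.24]{BottTu82} then shows that the pairing is unchanged when $\rho(s)\,ds$ is replaced by the Dirac measure at $s=0$, collapsing the inner integral to $a(t,0)$.

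On the right-hand side, unwinding the definition of pullback of a twisted form along the inclusion $\gamma$ equipped with the canonical orientation $\gamma^\flat=\id_{L_F}|_A$ gives $\gamma^*\mu=a(t,0)\,dt\otimes (\gamma^\flat)^{-1}(e|_A)$, whose integral over the oriented interval $[0,1]$ is $\varepsilon_2\int_0^1 a(t,0)\,dt$ for a second sign $\varepsilon_2$ determined by $\gamma^\flat$. The proposition follows with total sign $\varepsilon_1\varepsilon_2\in\{\pm 1\}$ depending only on $\gamma^\flat$ and $j$. The main obstacle is the bookkeeping of signs in the twisted setting: unlike the orientable case treated in Bott--Tu, neither $F$ nor the fibers of $N$ carry canonical orientations, so $e$ is determined only up to sign on $U$ and the fiber orientation of $N$ is itself data derived from $\gamma^\flat$ and $j$; once these conventions are fixed coherently, the argument reduces to the orientable computation.
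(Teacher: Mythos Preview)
Your approach is precisely what the paper does: it simply refers to the localization computation on \cite[page~67]{BottTu82}, using that the normal bundle of a properly embedded arc is a trivial line bundle over the interval, so the computation reduces to the standard orientable one in local coordinates. You go further than the paper in making the sign bookkeeping explicit and in flagging that the form-level identity requires $\mu$ to be closed --- for a fixed smooth Thom representative $\Phi=\rho(s)\,ds$, the equality $\int_{-\epsilon}^{\epsilon} a(t,s)\rho(s)\,ds=a(t,0)$ fails for generic $a$, so the proposition as literally stated for \emph{all} $\mu\in\Omega^1(F,\partial F;L_F)$ is not quite right at the level of forms. Since every application in the paper (\reflem{sweptArea}, and the pairing in \refsec{lambda-flux}) feeds in a closed $\mu$ such as $\eta-g^*\eta$, your restriction is harmless; and since the Bott--Tu argument the paper invokes is itself cohomological, this caveat is shared rather than a divergence in approach.
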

When the sign is positive, we say that the triple $(\gamma,\gamma^\flat,j)$ is  \emph{well-arranged}. For an associated triple $(\gamma,\gamma^b,j)$, we denote the $1$-form $j^*\Phi$ by $\lambda_\gamma$ and call it the \emph{Poincar\'e dual} of $(\gamma,\gamma^\flat)$ with respect to $j$.

Now, we are ready to show the following characterization:
\begin{lem}[Swept-Area Characterization of $\Fluxnl$]\label{Lem:sweptArea}
    Let $F$ be a compact, connected surface with boundary, equipped with an area density  form $\omega=d\eta$ and $\varphi \in \Gn(F)$.
    Assume that $(\gamma,\gamma^\flat,j)$ is a well arranged triple, given as in \refprop{PD}, and $\lambda_\gamma$ is the Poincar\'e dual of $(\gamma,\gamma^\flat,j)$.
     Then, 
    \[
    \Fluxnl[\lambda_\gamma](\varphi)=\cO_\gamma(\varphi).
    \]
\end{lem}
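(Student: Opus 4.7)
The plan is to match both sides of the asserted equality to the common ``boundary'' expression $\int_0^1 \gamma^*\eta - \int_0^1 (\varphi \circ \gamma)^*\eta$: the right-hand side via Stokes' theorem on the square $[0,1]^2$, and the left-hand side via the Poincar\'e duality statement of \refprop{PD}.

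For the swept-area side, since $\varphi \in \Gn(F)$, I would pick a smooth isotopy $\{\varphi_s\}_{s \in [0,1]}$ from the identity to $\varphi$ that is supported away from $\partial F$, and use the associated ``track'' homotopy $h(s,t) = \varphi_s(\gamma(t))$ in \refdefn{sweptArea} (this is legitimate by the preceding proposition asserting independence of the choice of $h$). With this choice, $h(s,0) = \gamma(0)$ and $h(s,1) = \gamma(1)$ are constant in $s$, so the pullback $h^*\eta$ vanishes on the horizontal edges of $\partial[0,1]^2$. Using $\omega = d\eta$ and Stokes' theorem,
\[
\int_{[0,1]^2} h^*\omega \;=\; \int_{\partial [0,1]^2} h^*\eta \;=\; \int_0^1 (\varphi\circ\gamma)^*\eta \;-\; \int_0^1 \gamma^*\eta,
\]
whence $\cO_\gamma(\varphi) = \int_0^1 \gamma^*\eta - \int_0^1 (\varphi \circ \gamma)^*\eta$.

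For the flux side, because $\varphi$ is the identity on a neighborhood of $\partial F$, the twisted form $\eta - \varphi^*\eta$ vanishes near $\partial F$ and hence lies in $\Omega^1(F, \partial F; L_F)$. Applying \refprop{PD} to $\mu = \eta - \varphi^*\eta$ and invoking well-arrangedness of $(\gamma, \gamma^\flat, j)$ to fix the sign as positive, I obtain
\[
\Fluxnl[\lambda_\gamma](\varphi) \;=\; \int_F (\eta - \varphi^*\eta) \wedge \lambda_\gamma \;=\; \int_0^1 \gamma^*(\eta - \varphi^*\eta),
\]
which equals the same expression as above using $\gamma^*\varphi^*\eta = (\varphi\circ\gamma)^*\eta$.

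I do not anticipate a serious obstacle here, since the argument reduces to Stokes together with \refprop{PD}. The main care needed is verifying that the pullback of the twisted $2$-form $\omega$ by the oriented homotopy $h$ yields an honest top-degree form on the contractible square $[0,1]^2$ (where the orientation bundle is trivial), and that the orientation convention encoded in ``well-arranged'' exactly cancels the ambiguous sign in \refprop{PD}, so the two computations agree on the nose rather than up to a sign.
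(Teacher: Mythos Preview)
Your proof is correct and reaches the same conclusion as the paper, but by a somewhat more direct route. The paper introduces the time-dependent vector field $X_s$ generating the isotopy $\varphi_s$, then computes both sides in terms of $i(X_s)\omega$: on the swept-area side it expands $h^*\omega$ pointwise as $\omega(X_s(\gamma(t)),\dot\gamma(t))\,ds\wedge dt$, and on the flux side it uses Cartan's formula to rewrite $[\eta-\varphi^*\eta]=-\int_0^1[i(X_s)\omega]\,ds$, before invoking \refprop{PD} at the very end. Your argument bypasses the vector field entirely: you apply Stokes directly on $[0,1]^2$ to collapse $\cO_\gamma(\varphi)$ to the difference of the two edge integrals $\int\gamma^*\eta-\int(\varphi\circ\gamma)^*\eta$, and you apply \refprop{PD} directly to the form $\eta-\varphi^*\eta$ (which lies in $\Omega^1(F,\partial F;L_F)$ since $\varphi$ fixes a neighborhood of $\partial F$). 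Both approaches are standard; yours is shorter and avoids the Cartan-formula bookkeeping, while the paper's version makes the connection with the usual vector-field definition of flux more visible. The caveat you flag about twisted forms and orientations on the contractible square is exactly the point that needs care, and it is handled by the orientation $h^\flat$ extending $\gamma^\flat$ together with the well-arranged hypothesis.
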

\begin{proof}
    Choose a smooth isotopy $\varphi_t\in \Gn(F)$ from the identity to $\varphi$.
    Let $X_t$ be the time-dependent vector field generating $\varphi_t$, namely,
    $$\frac{d}{dt}\varphi_t =X_t\circ \varphi_t \text{ and }\varphi_0=id.$$
    Define a smooth homotopy $h:[0,1]\times [0,1]\to F$ from $\gamma$ to $\varphi\circ \gamma$ as 
    $h(s,t)=\varphi_s\circ \gamma(t)$, oriented by $h^\flat$ with $h^\flat {\restriction_{\{0\}\times [0,1]}}=\gamma^\flat$.
     By differentiating, 
     $$\partial_s h=d\varphi_s(\gamma)X_s(\gamma(t)) \text{ and }\partial_t h=d\varphi_s(\gamma) \dot{\gamma}(t)$$
     Since $\varphi_s^*\omega=\omega$,
     $$\cO_\gamma(\varphi)=-\int_0^1\int_0^1 \omega(X_s(\gamma(t)),\dot{\gamma}(t))dsdt.$$

     On the other hand, since $\omega=d\eta$ and 
     $$\frac{d}{ds}[\varphi_s^*\eta]=[\varphi_s^*\{i(X_s)d\eta+d(i(X_s)\eta)\}]=[\varphi_s^*i(X_s)\omega]=[i(X_s)\omega],$$
     we have that
     $$\Fluxn(\varphi)=[\eta-\varphi^*\eta]=\int_1^0\frac{d}{ds}[\varphi_s^*\eta]ds=-\int_0^1[i(X_s)\omega]ds.$$
    Thus, since $(\gamma,\gamma^\flat,j)$ is well arranged, it follows from \refprop{PD} that 
    \[
    \cO_\gamma(\varphi)=\int_{[0,1]}  \gamma^*\Fluxn(\varphi)=\int_F \Fluxn(\varphi) \wedge \lambda_\gamma=\Fluxnl[\lambda_\gamma](\varphi).
    \]
\end{proof}

\begin{figure}[ht]
        \centering
\includegraphics[width=0.5\linewidth]{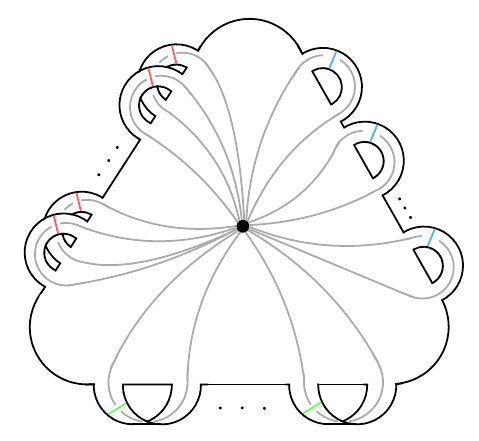}
        \caption{A compact surface with boundary}
        \label{Fig:surface}
\end{figure}

Based on \reflem{sweptArea}, we can see that $\Fluxn$ is surjective. Moreover, it provides a characterization of $\ker(\Fluxn)$.
To see this, we recall the classification theorem of compact surfaces.

By the classification of compact surfaces, any compact surface $F$ with non-empty boundary is obtained as the connected sum of disks $D^2$, tori $T^2$, and real projective planes $\RP^2$. 
Hence, $F$ can be factorized as a connected sum of copies of  $D^2$, $T^2$ and $\RP^2$.
In particular, since  $F$ has a boundary, there is at least one $D^2$ factor. 

A compact surface with non-empty boundary can be represented as a disk with several bands attached.
In \reffig{surface}, several bands are attached along the sides of the central triangular disk.
On the left side, each pair of bands is attached alternately; each pair represents the torus factor.
On the right side, the bands corresponding to the $D^2$-factors are attached consecutively.
Along the bottom, the half‑twisted bands corresponding to the $\RP^2$-factors are also attached consecutively.

Note that if the surface is orientable, then there is no $\RP^2$-factor and we do not need the bands in the bottom side.
Also, when the surface is non-orientable, that is, the factorization of the surface has at least one $\RP^2$-factors, we do not need the bands in the left side by the relation that $\RP^2 \# \RP^2\#\RP^2\cong \RP^2\# T^2$.

By taking a proper embedded arc in each band, we can see that there is a finite collection of properly embedded arcs $\{\alpha_i\}_{i=1}^n$ such that the complement of the union of arcs is a connected contractible subset. 
See the colored arcs in \reffig{surface}.

\begin{prop}\label{Prop:cutSystem}
    Let $F$ be a connected, compact surface with non-empty boundary, equipped with an area density  form $\omega$.
    Then, there is a finite collection of well-arranged triples $\{(\gamma_i,\gamma^\flat_i,j_i)\}_{i=1}^n$ satisfying the followings:
    \begin{itemize}
        \item the tubular neighborhoods $T_i=\im(j_i)$ are pairwise disjoint;
        \item the complement of the union of $\im(\gamma_i)$ is contractible;
        \item the Poincar\'e duals $\lambda_i$ of $(\gamma_i,\gamma^\flat_i,j_i)$ form a basis of $H^1(F)$.
     \end{itemize}
     We call such a collection a \emph{cut system} of $F$.
\end{prop}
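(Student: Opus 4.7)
The plan is to leverage the band decomposition described just before the proposition: every compact, connected surface $F$ with non-empty boundary is obtained from a central disk $D$ by attaching a finite number $n$ of bands along disjoint sub-intervals of $\partial D$, each band being untwisted or half-twisted. I will construct the arcs directly from this decomposition and verify the three bullets in turn.

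In each band $B_i$ take a properly embedded arc $\gamma_i$ crossing $B_i$ once, transverse to its core, with endpoints on $\partial F \cap B_i$; equip $\gamma_i$ with an orientation $\gamma_i^\flat$ and a tubular embedding $j_i : N_i \hookrightarrow F$ whose image $T_i$ is contained in $B_i$. The first bullet then holds automatically because distinct bands are disjoint. The second bullet follows because cutting each band along $\gamma_i$ kills its nontrivial loop: the open complement $F \setminus \bigcup_i \im(\gamma_i)$ deformation retracts onto $D$ and is thus connected and contractible. After possibly reversing $\gamma_i^\flat$ we may assume each triple $(\gamma_i, \gamma_i^\flat, j_i)$ is well-arranged in the sense of \refprop{PD}, so the Poincar\'e dual $\lambda_i = j_{i\ast}\Phi_i$ is well-defined.

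For the third bullet, the band decomposition exhibits $F$ as homotopy equivalent to a wedge of $n$ circles realized by the core loops $\delta_1, \ldots, \delta_n$ of the bands; in particular $\dim H^1(F) = n$, so it suffices to show $\{[\lambda_i]\}$ is linearly independent. For $i \neq j$, the support of $\lambda_i$ lies in $T_i \subset B_i$ which is disjoint from $\delta_j \subset B_j$, so $\int_{\delta_j} \lambda_i = 0$. For $i = j$, the loop $\delta_i$ crosses $T_i$ transversely exactly once (in both the untwisted and half-twisted cases), and the integral of $\lambda_i$ over $\delta_i$ equals the total integral of the Thom class $\Phi_i$ along a fiber of $N_i$, which is $\pm 1$. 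Hence the pairing matrix between $\{[\lambda_i]\}$ and $\{[\delta_j]\}$ is diagonal and invertible, yielding linear independence and a basis by dimension count.

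The step I expect to require the most care is the pairing computation in the non-orientable setting, since one must be sure that $\lambda_i$ is an ordinary (not twisted) $1$-form---otherwise $\int_{\delta_i} \lambda_i$ against an oriented loop would not be a classical integral. The rescue is that $\gamma_i$ is itself contractible, so its normal bundle $N_i$ is automatically trivial as an orientable line bundle, and $\Phi_i$ is an ordinary compactly supported $1$-form on $N_i$; consequently $\lambda_i$ is an ordinary $1$-form on $F$, and the twist encoded in $L_F$ manifests only when $\lambda_i$ is paired against a twisted primitive $\eta$ of $\omega$, as in the proof of \reflem{sweptArea}.
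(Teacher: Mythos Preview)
Your proof is correct and follows the same band-decomposition approach that the paper sketches in the paragraphs preceding the proposition (the paper states the proposition without an explicit proof, relying on that discussion and \reffig{surface}). Your treatment is in fact more detailed than the paper's, particularly the explicit verification of the third bullet via the pairing matrix against the core loops $\delta_j$, and your final paragraph correctly addresses the subtlety that $\lambda_i$ is an ordinary (not twisted) $1$-form.
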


\begin{prop}
    Let $F$ be a connected, compact surface with non-empty boundary, equipped with an area density  form $\omega=d\eta$.
    Then, $\Fluxn(F)$ is surjective.
\end{prop}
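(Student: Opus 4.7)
The plan is to combine Poincar\'e duality for twisted forms with the swept-area characterization of Lemma~\ref{Lem:sweptArea}, reducing surjectivity of $\Fluxn$ to realizing prescribed swept areas across the arcs of a cut system. First, I would invoke the cut system $\{(\gamma_i,\gamma_i^\flat,j_i)\}_{i=1}^n$ from Proposition~\ref{Prop:cutSystem}, whose Poincar\'e duals $\{[\lambda_i]\}$ form a basis of $H^1(F)$. The non-degenerate pairing $H^1(F,\partial F;L)\otimes H^1(F)\to \RR$ recalled in Subsection~\ref{Subsec:lambda-flux} identifies $H^1(F,\partial F;L)$ with the $\RR$-linear dual of $H^1(F)$, so $\Fluxn$ is surjective if and only if the combined map
\[
    (\Fluxnl[\lambda_1],\ldots,\Fluxnl[\lambda_n])\colon \Gn(F)\to \RR^n
\]
is surjective. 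By Lemma~\ref{Lem:sweptArea}, this is equivalent to realizing every target $(c_1,\ldots,c_n)\in \RR^n$ as the tuple of swept areas $(\cO_{\gamma_1}(\varphi),\ldots,\cO_{\gamma_n}(\varphi))$ for some $\varphi\in \Gn(F)$.

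To produce such $\varphi$, I would use the isomorphism $H^1(F,\partial F;L)\cong H^1_c(\mathring{F};L)$ to select a dual basis $\{[\lambda_i^*]\}_{i=1}^n$ of $H^1(F,\partial F;L)$, each represented by a closed twisted $1$-form $\lambda_i^*\in \Omega^1_c(\mathring{F};L)$ compactly supported in $\mathring{F}$ and satisfying $\int_F \lambda_i^*\wedge \lambda_j=\delta_{ij}$. For each $i$, define a vector field $X_i$ on $F$ by $i(X_i)\omega=-\lambda_i^*$; closedness of $\lambda_i^*$ gives $\cL_{X_i}\omega=d(i(X_i)\omega)=-d\lambda_i^*=0$, while compact support of $\lambda_i^*$ in $\mathring{F}$ forces $X_i$ to vanish near $\partial F$, so the time-$\tau$ flow $\varphi_\tau^i$ lies in $\Gn(F)$ for every $\tau$. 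A direct computation using Cartan's magic formula applied to $\frac{d}{d\tau}\varphi_\tau^{i*}\eta=\varphi_\tau^{i*}(i(X_i)\omega+d(i(X_i)\eta))$, together with the facts that $i(X_i)\eta$ vanishes near $\partial F$ and that $\varphi_\tau^i$ acts trivially on $H^1(F,\partial F;L)$ (since it is isotopic to the identity through $\Gn(F)$), yields $\Fluxn(\varphi_\tau^i)=\tau[\lambda_i^*]$, and hence $\Fluxnl[\lambda_j](\varphi_\tau^i)=\tau\,\delta_{ij}$.

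Finally, since $\Fluxn$ is a homomorphism, the composite $\varphi=\varphi_{c_1}^1\circ\cdots\circ\varphi_{c_n}^n\in \Gn(F)$ satisfies $\Fluxnl[\lambda_j](\varphi)=c_j$ for any prescribed $(c_1,\ldots,c_n)\in \RR^n$, establishing surjectivity. The main obstacle is realizing the dual basis $\{[\lambda_i^*]\}$ by closed twisted $1$-forms with compact support in $\mathring{F}$; once this is arranged via the isomorphism $H^1(F,\partial F;L)\cong H^1_c(\mathring{F};L)$ and the non-degeneracy of the Poincar\'e pairing, everything else reduces to formal manipulation with Cartan's formula and the homomorphism property of $\Fluxn$.
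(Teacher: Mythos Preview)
Your argument is correct and gives a clean, self-contained proof. The key insight---that any closed compactly supported twisted $1$-form $\lambda^*$ yields a divergence-free vector field $X$ via $i(X)\omega=-\lambda^*$, whose time-$\tau$ flow has $\Fluxn$ exactly $\tau[\lambda^*]$---is the standard mechanism behind flux surjectivity in the symplectic/volume setting, and it carries over unchanged to the twisted case once $H^1(F,\partial F;L)\cong H^1_c(\mathring{F};L)$ is in hand. Your verification that the exact term $d\big(\int_0^\tau(\varphi_s^i)^*(i(X_i)\eta)\,ds\big)$ is exact \emph{relative to $\partial F$} (because $X_i$ vanishes near the boundary) and that $(\varphi_s^i)^*$ acts trivially on $H^1(F,\partial F;L)$ are the two points that need care, and you address both.

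The paper's proof takes a more hands-on route: rather than invoking a dual basis abstractly, it localizes the problem. For each arc $\gamma_i$ of the cut system it chooses a simple closed curve $\beta_i$ meeting $\gamma_i$ once and disjoint from the other $T_j$, then restricts attention to a tubular neighborhood $N_i$ of $\beta_i$ (an annulus or a M\"obius band). Surjectivity of $\Fluxnl[\lambda_i]$ is then reduced, via Moser's theorem, to surjectivity of $\Fluxn$ on the model piece $N_i$; the annulus case is classical, and the M\"obius band case is done by an explicit bump-function construction of a one-parameter family $p_t$. Your approach buys uniformity---no case split between orientable and non-orientable neighborhoods and no explicit formulas---while the paper's approach produces concrete flows supported in small pieces of the surface, which can be convenient if one later wants explicit representatives or support control. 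Both ultimately hit the same target: realizing each element of a basis of $H^1(F,\partial F;L)$ as a flux value.
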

\begin{proof}
    By \refprop{cutSystem}, we can take a cut system $\{(\gamma_i,\gamma^\flat_i,j_i)\}_{i=1}^n$.
     Also, we use the notations $T_i,\lambda_i$ in \refprop{cutSystem}.
   Once we show the surjectivity of $\Fluxnl[\lambda_i]$, the surjectivity of $\Fluxn$ follows.

   For each $i\in \{1,2,\cdots, n\}$, we can take a smooth simple closed curve $\beta_i$ such that $\beta_i$ does not intersect $T_j,i\neq j$ and $\beta_i$ intersect transversely $\gamma_i$ at a unique point (as the gray curve in \reffig{surface}). 
   For a sufficiently small tubular (closed) neighborhood $N_i$ of $\beta_i$, we may assume that $N_i$ does not intersect $T_j, i\neq j$  and $N_i\cap \im(\gamma_i)$ is a properly embedded arc $d_i$ in $N_i$. 
   Note that $N_i$ is either a closed annulus or M\"obius band. 

   Observe that any Poincar\'e dual $\lambda$ associated with $d_i$ in $N_i$ is a generator of $H^1(N_i)$ since $\dim H^1(N_i)=1$. 
   Due to \reflem{sweptArea} and Moser's theorem (\refthm{Moser}), it is enough to show that when $N$ is either a closed annulus or M\"obius band, the flux homomoprhism $\Fluxn:\Gn(N)\to \RR$ is surjective.
   In the case of annulus, it is well-known. 
   Here, we discuss the case where $N$ is a closed M\"obius band $M$.

   For the simplicity, we assume that $M$ is the quotient space of $\widetilde{M}=\RR\times I$ by the deck transformation defined as $\tau(x,y)=(x+1,-y)$ where $I=[-1/2,1/2]$.
   Say that $\pi:\widetilde{M}\to M$ is the quotient map.
   Also, we use $\eta=-xdy\otimes e$ for some global constant section $e$ of $L_{\widetilde{M}}$ such that $e=1$ or $e=-1$ in any local trivialization. 
   Note that $d\eta$ is the standard area density  form $dx\wedge dy \otimes e$ on $\widetilde{M}$.
   Moreover, $\eta$ and $d\eta$ induce well-defined twisted forms in $M$.

   Now, we define a one-parameter family $p_t$ of $\omega$-preserving diffeomorphisms in $\Gn(M)$ such that $\Fluxn(\{p_t:t\in \RR \})=H^1(M)\cong \RR$.
   Let $b:I\to \RR$ be a bump function on $I$, satisfying
\begin{itemize}
    \item $b$ is an even function, that is,  $b(y)=b(-y)$;
    \item the support of $b$ is $[-1/4,1/4]$;
    \item $b=1$ on $[-1/8,1/8]$.  
\end{itemize}

Define a vector field $\widetilde{X}$ on $\widetilde{M}$ as 
$\widetilde{X}(x,y)=b(y)\partial/\partial x$ and then this induces a smooth vector field $X$ on $M$. We put $\tilde{p}_t$ and $p_t$ as the $1$-parameter family of $\widetilde{X}$ and $X$, respectively. 
Then, $\pi\circ  \tilde{p}_t =p_t$.
By the construction,  
$$\tilde{p}_t(x,y)=(x+t \cdot b(y),y)$$
and $\tilde{p}_t$ and  $\tilde{p}_t$ are $\omega$-preserving diffeomorphisms.
Moreover, 
$$\eta-\tilde{p}_t^*\eta=-ydx \otimes e + yd(x+tb(y)) \otimes e
=t\cdot yb'(y)dy\otimes e.$$
By abusing the notation, we have that 
\[\Fluxnl[dx](p_t)=-\int_M t\cdot y b'(y)dx\wedge dy\otimes e=-t\int_M y b'(y) \omega.\] 
Note that $b':I\to \RR$ is an odd function  supported on $[-1/4,1/4]\setminus (-1/8,1/8)$ and $\int_{0}^{1/2} b'(y)dy\neq 0$. 
Hence, $\int_M y b'(y) \omega\neq 0$.
Therefore, $\Fluxnl[dx](\{p_t:t\in \RR\})=\RR$. 
Since $[dx]$ is a generator of $H^1(M)$, this implies the desired surjectivity.   \qedhere
\end{proof}

In the similar way, we can charaterize an element in $\ker(\Fluxn)$ as follows:
\begin{lem}\label{Lem:zeroArea}
    Let $F$ be a connected, compact surface with non-empty boundary, equipped with an area density  form $\omega$ and $\{(\gamma_i,\gamma^\flat_i,j_i)\}_{i=1}^n$ a cut system of $F$.
    Let $\lambda_i$ be the Poincar\'e duals  of $(\gamma_i,\gamma^\flat_i,j_i)$.
    Then, for any $g\in \Gn(F)$, $g\in \ker(\Fluxn)$ if and only if $\Fluxnl[\lambda_i](g)=0$ for all $i$.
\end{lem}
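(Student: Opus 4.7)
The plan is to deduce the lemma directly from the non-degenerate pairing between the relative twisted cohomology and the ordinary de Rham cohomology that was recorded in \refsec{Subsec:lambda-flux}, combined with the basis property of the Poincar\'e duals $\{\lambda_i\}$ built into the definition of a cut system (\refprop{cutSystem}).

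First I would note that the forward direction is essentially a tautology: by the definition of $\Fluxnl[\lambda]$ in \refsec{Subsec:lambda-flux}, one has
\[
    \Fluxnl[\lambda](g) = \int_F (\eta - g^*\eta) \wedge \lambda,
\]
which depends only on $[\eta - g^*\eta] = \Fluxn(g) \in H^1(F,\partial F;L)$ and $[\lambda] \in H^1(F)$. Hence, if $\Fluxn(g) = 0$, then $\Fluxnl[\lambda_i](g) = 0$ for every $i$.

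For the reverse direction, I would invoke the non-degenerate pairing
\[
    H^1(F,\partial F;L) \otimes H^1(F) \to \RR, \qquad ([\alpha],[\beta]) \mapsto \int_F \alpha \wedge \beta,
\]
coming from Poincar\'e duality for the twisted de Rham complex on $\mathring{F}$ together with the isomorphisms $H^1(F,\partial F;L) \cong H_c^1(\mathring F; L_{\mathring F})$ and $H^1(F)\cong H^1(\mathring F)$. By the third bullet of \refprop{cutSystem}, the classes $[\lambda_1],\ldots,[\lambda_n]$ form a basis of $H^1(F)$. Thus, assuming $\Fluxnl[\lambda_i](g)=0$ for every $i$, the class $\Fluxn(g)$ pairs to zero with every element of a basis of $H^1(F)$, hence pairs trivially with all of $H^1(F)$. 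Non-degeneracy forces $\Fluxn(g)=0$.

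There is no real obstacle here; the only thing to check is that everything is genuinely $\RR$-linear in the $H^1(F)$ slot, which is immediate since wedge product and integration are $\RR$-bilinear, and that the pairing is well-defined on cohomology classes, which is the standard computation already used in \refsec{Subsec:lambda-flux}. So the proof amounts to assembling these observations into the biconditional.
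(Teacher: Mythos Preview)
Your proposal is correct and matches the paper's own argument: the forward direction is immediate from the definition of $\Fluxnl[\lambda_i]$, and the reverse direction uses the non-degenerate Poincar\'e pairing together with the fact that $\{[\lambda_i]\}$ is a basis of $H^1(F)$. The paper's proof is terser but identical in substance.
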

\begin{proof}
    If $g\in \ker(\Fluxn)$, then it follows from the definition that $\Fluxnl[\lambda_i](g)=0$ for all $i$.
    The other direction follows from the condition that the Poincar\'e duals $\lambda_i$ of $(\gamma_i,\gamma^\flat_i,j_i)$ form a basis of $H^1(F)$.
\end{proof}

\section{Fragmentation Lemma on $\ker(\Fluxn)$}\label{Sec:excision}

The sympletic fragmentation lemma is one of the key ingredients in the proof of the simplicity of the kernel of the Calabi invariant in orientable surfaces (see \cite[page~110]{Banyaga97} for the proof):
\begin{lem}[The symplectic fragmentation lemma]\label{Lem:sympFrag}
Let $\cU=\{U_i\}_{i\in I}$ be an open cover of a connected symplectic manifold $X$ with a symplectic form $\omega$.
If $\varphi$ is a Hamiltonian diffeomorphisms on $X$, then $\varphi$ can be written as 
\[
\varphi=\varphi_1\varphi_2\cdots\varphi_\fN
\]
where each $\varphi_i$ is a Hamiltonian diffeomorphism, supported in some $U_{n(i)}\in \cU, n(i)\in I$.
In particular, if $X$ is not compact and $\Cal(\varphi)=0$, then we can choose that $\Cal_{U_{n(i)}}(\varphi_i)=0$.
\end{lem}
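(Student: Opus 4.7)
The plan is to exploit two standard tools: the Hamiltonian isotopy defining $\varphi$, and a partition of unity subordinate to a finite refinement of $\cU$. Let $\{\varphi_t\}_{t\in[0,1]}$ be a Hamiltonian isotopy from the identity to $\varphi$, generated by a compactly supported, time-dependent Hamiltonian $H_t$. Since $\supp H_t$ is compact, I may pass to a finite subcover $U_{i_1},\ldots,U_{i_m}$ drawn from $\cU$ and choose a smooth partition of unity $\{\rho_j\}_{j=1}^m$ with $\supp\rho_j\subset U_{i_j}$.

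The core decomposition then proceeds by induction on the number of sets in the cover, the heart being the binary case $\cU=\{V,W\}$. Split $H_t=\rho H_t+(1-\rho)H_t$ with $\supp\rho\subset V$ and $\supp(1-\rho)\subset W$. Let $\psi_t$ be the Hamiltonian flow of $\rho H_t$, so that $\supp\psi_t\subset V$, and consider the ``residual'' isotopy $\varphi_t\circ\psi_t^{-1}$. After subdividing $[0,1]$ into sufficiently fine intervals and applying an isotopy-extension/Moser stability argument on each, one can realize the residual as the time-one map of a Hamiltonian isotopy supported in $W$, yielding a factorization $\varphi=\varphi^W\circ\varphi^V$ with the prescribed supports. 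Iterating this binary step on the grouping $\{U_{i_1}\}$ versus $\{U_{i_2},\ldots,U_{i_m}\}$ produces the desired product $\varphi=\varphi_1\cdots\varphi_N$.

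For the refinement when $X$ is non-compact and $\Cal(\varphi)=0$: the fragments $\varphi_i$ satisfy $\sum_i\Cal_{U_{n(i)}}(\varphi_i)=\Cal(\varphi)=0$, since $\Cal$ is a homomorphism that agrees with $\Cal_U$ on Hamiltonian diffeomorphisms supported in $U$. Using the non-compactness of $X$ together with the surjectivity of each local Calabi on any non-empty open set, I would, for each $i$, pick a small ball $B_i\subset U_{n(i)}$ disjoint from $\supp\varphi_i$ and a Hamiltonian diffeomorphism $\tau_i$ supported in $B_i$ with $\Cal(\tau_i)=-\Cal_{U_{n(i)}}(\varphi_i)$. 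Insert a trivial pair $\tau_i\tau_i^{-1}$ next to each $\varphi_i$, absorb $\tau_i$ into $\varphi_i$ to kill its local Calabi, and iteratively absorb each leftover $\tau_i^{-1}$ into a neighboring fragment by first conjugating through a Hamiltonian isotopy whose support is arranged not to interfere with the already-fixed pieces. The telescoping closes precisely because $\sum_i\Cal(\tau_i^{-1})=-\sum_i a_i=0$.

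The main obstacle is the binary base case: the partition-of-unity splitting $H_t=\rho H_t+(1-\rho)H_t$ does not produce commuting flows, so identifying $\varphi_t\circ\psi_t^{-1}$ with a Hamiltonian isotopy supported in $W$ requires a Trotter-type time subdivision together with careful control of the locus of supports throughout the isotopy. The Calabi-refinement part is conceptually easier, but the geometric bookkeeping---conjugating compensating diffeomorphisms through the product without their supports escaping the allowed open sets---must be done with care. These two technicalities are exactly the delicate points handled in Banyaga's textbook argument, after which the statement follows.
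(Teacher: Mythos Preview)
The paper does not give its own proof of this lemma; it simply cites Banyaga's book (\cite[page~110]{Banyaga97}). Your sketch is essentially an outline of that standard argument---partition of unity plus time subdivision for the fragmentation, and a telescoping compensation for the Calabi refinement---and you correctly flag the two delicate points (non-commutativity of the split flows, and the support bookkeeping for the compensators) as the places where real work is needed. So there is nothing to compare: your proposal and the paper's reference point to the same source.

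One small caution on your binary base case: as written, the residual isotopy $\varphi_t\psi_t^{-1}$ need not have support contained in $W$ without first subdividing time so that each elementary piece is $C^1$-small; the partition-of-unity splitting alone does not localize supports. You acknowledge this in the last paragraph, but the way the binary step is phrased earlier suggests it works before subdivision, which it does not. In Banyaga's actual argument the time subdivision comes first, reducing to the case of diffeomorphisms close to the identity, and only then is the cover used. With that ordering your outline is correct.
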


In this section, we show a version of fragmentation lemma for $\ker(\Fluxn)$ (\reflem{frag}).
To do this, we first prove some excision lemmas, following \cite{EntovPolterovichPy} and \cite{Serraille}.

\subsection{Area-preserving excision lemma}
For a homeomorphism $f$ on a topological space, we define the \emph{support} of $f$  as the closure of $\{x\in X: f(x)\neq x\}$ and denote it by $\supp(f)$.

First, we remark the smooth excision lemma:
\begin{lem}[Smooth excision lemma]\label{Lem:smoothExcision}
    Let $R=[0,\ell]\times [-w,w]$ be a rectangle and $R_1\subset R_2\subset R$ be two smaller rectangles of the form $R_i=[0,\ell]\times [-w_i,w_i],i\in\{1,2\}$ with $0<w_1<w_2<w$. Assume that $R$ is equipped with an area density  form $\nu$.
    Let $F$ be a connected, compact surface with non-empty boundary, equipped with an area density  form $\omega$.  Suppose that there is a smooth area-preserving  embedding $\delta:R\to F$ (that is, $\delta^*\omega=\nu$ ) such that $\delta(R)\cap \partial F=\delta(\{0,\ell\}\times [-w,w])$. Let $U$ be an open subset of $\delta(R)$ such that $\delta(R_1)\subset U \subset \delta(R_{2})$. If $\varphi\in \Gn(F)$ and there is an isotopy $\varphi_t\in \Gn(F)$ such that 
    \begin{itemize}
        \item $\varphi_0=id$ and $\varphi_1=\varphi$;
        \item $\varphi_t(U)\subset \delta(R_{2})$ for all $t\in [0,1]$,
    \end{itemize}
    then, there exists $\psi$ and an isotopy $\psi_t$ in the identity component $\Diff_0(F,\near \partial F)$ of the group of diffeomorphisms  fixing some neighborhoods of $\partial F$ such that $\psi_0=id$, $\psi_1=\psi$, $\supp(\psi_t)\subset \delta(R)$ and $\psi_t=\varphi_t$ on $U$.
\end{lem}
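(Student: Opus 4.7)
The plan is to produce $\psi_t$ as the flow of a cutoff of the time-dependent vector field that drives $\varphi_t$. The key observation is that to force $\psi_t|_U = \varphi_t|_U$ it suffices to arrange that the generating vector field of $\psi_t$ coincides with that of $\varphi_t$ along the moving image $\varphi_t(U)$; uniqueness of ODE solutions then matches the trajectories pointwise on $U$.

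Concretely, I would let $Z_t$ be the smooth time-dependent vector field on $F$ defined by $\frac{d}{dt}\varphi_t = Z_t \circ \varphi_t$. Because $\varphi_t \in \Gn(F)$, each $Z_t$ vanishes on a neighborhood of $\partial F$. I would then choose $w_3$ with $w_2 < w_3 < w$ and a smooth cutoff $\tilde\chi \colon R \to [0,1]$ that equals $1$ on $R_2$ and is supported in $[0,\ell]\times[-w_3, w_3]$. Since $\tilde\chi$ vanishes near the long edges $[0,\ell]\times\{\pm w\}$ of $R$, pushing it forward by $\delta$ and extending by zero outside $\delta(R)$ yields a smooth function $\chi \colon F \to [0,1]$ with $\chi \equiv 1$ on $\delta(R_2)$ and $\supp(\chi) \subset \delta(R)$; no smoothness obstruction arises along the two short edges $\delta(\{0,\ell\}\times[-w,w])$ because they lie inside $\partial F$, so there is nothing to match across them. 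Setting $W_t := \chi \cdot Z_t$ and letting $\psi_t$ be the isotopy with $\psi_0 = \mathrm{id}$ generated by $W_t$, the properties $\supp(\psi_t) \subset \delta(R)$ and $\psi_t \in \Diff_0(F, \near \partial F)$ are immediate from $\supp(W_t) \subset \supp(\chi) \subset \delta(R)$ together with the vanishing of $Z_t$, and hence of $W_t$, near $\partial F$.

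For the matching condition, I would fix $x \in U$; by hypothesis $\varphi_t(x) \in \varphi_t(U) \subset \delta(R_2)$ for every $t \in [0,1]$, where $\chi \equiv 1$, so
\[
    W_t(\varphi_t(x)) = Z_t(\varphi_t(x)) = \tfrac{d}{dt}\varphi_t(x).
\]
Hence the path $t \mapsto \varphi_t(x)$ satisfies the same time-dependent ODE $\dot{\sigma}(t) = W_t(\sigma(t))$, $\sigma(0) = x$, as $t \mapsto \psi_t(x)$, and uniqueness of solutions yields $\psi_t(x) = \varphi_t(x)$ for all $t$, as required. Setting $\psi := \psi_1$ completes the construction.

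The argument is essentially bookkeeping, so I do not expect a serious obstacle; the one delicate point is the construction of $\chi$, since smooth extension across the long boundary of $\delta(R)$ inside $F$ forces $\tilde\chi$ to vanish in a neighborhood of $[0,\ell]\times\{\pm w\}$, which is exactly the role of the intermediate width $w_3$. Finally, note that the resulting $\psi_t$ is in general not area-preserving — multiplying the $\omega$-preserving vector field $Z_t$ by a non-constant cutoff generically destroys this property — which is consistent with the conclusion being stated in $\Diff_0(F, \near \partial F)$ rather than $\Gn(F)$, and explains why this is only a preliminary "smooth" excision statement.
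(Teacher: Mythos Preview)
Your proposal is correct and is essentially the same argument as the paper's: cut off the generating time-dependent vector field of $\varphi_t$ by a bump function equal to $1$ on $\delta(R_2)$ and supported in $\delta(R)$, and use ODE uniqueness to obtain $\psi_t|_U=\varphi_t|_U$. Your write-up is in fact more careful than the paper's about the construction of the cutoff near $\partial F$ and about why $\psi_t\in\Diff_0(F,\near\partial F)$.
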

\begin{proof}
    Let $X_t$ be the time-dependent vector field generating $\varphi_t$. Then, we take a smooth function $\lambda:F\to \RR$ such that $\lambda(F)\subset [0,1]$, $\closure{F\setminus R} \subset \lambda^{-1}(0)$ and $R_{2}\subset \lambda^{-1}(1)$.
    The isotopy $\psi_t$, generated by the time-dependent vector field $\lambda X_t$, satisfies the following properties:
    \begin{itemize}
        \item $\psi_t\in \Diff_0(F,\near \partial F)$;
        \item $\psi_t$ are the identity outside of $\delta(R)$;
        \item $\psi_t=\varphi_t$ on $U$ for all $t\in [0,1]$. 
    \end{itemize}
    The third property follows from the condition that $R_{2}\subset \lambda^{-1}(1)$ and $\varphi_t(U)\subset \delta(R_{2})$ for all $t\in [0,1]$.
    Then, $\psi_1$ is the desired diffeomorphism $\psi$.
\end{proof}

Then we promote the diffeomorphism given by \reflem{smoothExcision} to an area-preserving diffeomorphism:

\begin{lem}[Area-preserving excision lemma]\label{Lem:densityExcision}
    Let $R=[0,\ell]\times [-w,w]$ be a rectangle and $R_1\subset R_2\subset R_3 \subset R$ three smaller rectangles of the form $R_i=[0,\ell]\times [-w_i,w_i],i\in\{1,2,3\}$ with $0<w_1<w_2<w_3<w$. Assume that $R$ is equipped with an area density  form $\nu$.
    Let $F$ be a connected, compact surface with non-empty boundary, equipped with an area density  form $\omega$.  
    Suppose that there is a smooth area-preserving embedding $\delta:R\to F$ such that $\delta^*\omega=\nu$ and   $\delta(R)\cap \partial F =\delta(\{0,\ell\}\times [-w,w])$. Let $U$ be an open subset of $\delta(R)$ such that $\delta(R_1)\subset U \subset \delta(R_{2})$.
    If $\varphi\in \ker(\Fluxn)$ and there is an isotopy $\varphi_t\in \ker(\Fluxn)$ such that 
    \begin{itemize}
        \item $\varphi_0=id$ and $\varphi_1=\varphi$;
        \item $\varphi_t(U)\subset \delta(R_{2})$ for all $t\in [0,1]$,
    \end{itemize}
    then, there exists $\psi \in \Gn(F)$ such that $\supp(\psi)\subset \delta(R)$ and $\psi=\varphi$ on $\delta(R_1)$.
\end{lem}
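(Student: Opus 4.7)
The overall approach is to first invoke the smooth excision lemma (\reflem{smoothExcision}) to produce a (not-necessarily-area-preserving) diffeomorphism $\theta$ agreeing with $\varphi$ on $\delta(R_1)$ and supported in $\delta(R)$, then apply a relative Moser's theorem to correct $\theta$ to an area-preserving $\psi = \theta \circ \sigma$. The flux hypothesis supplies the cohomological condition needed for the Moser step.

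I would first apply \reflem{smoothExcision} to the isotopy $\varphi_t$ to obtain a smooth isotopy $\theta_t$ in $\Diff_0(F, \near \partial F)$ with $\supp(\theta_t) \subset \delta(R)$, $\theta_0 = \id$, and $\theta_t = \varphi_t$ on $\delta(R_2) \supset \delta(R_1)$, and denote $\theta := \theta_1$. The twisted forms $\omega$ and $\theta^*\omega$ coincide outside $\delta(R)$ (where $\theta = \id$) and on a neighborhood of $\delta(R_1)$ (where $\theta = \varphi$ is area-preserving), so $\omega - \theta^*\omega$ is supported in the two components $M_+, M_-$ of $\overline{\delta(R) \setminus \delta(R_1)}$, each topologically a disk.

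The key step is to verify $\int_{M_\pm} \theta^*\omega = \int_{M_\pm} \omega$ for each component. Using $\int_{M_\pm} \theta^*\omega = \int_{\theta(M_\pm)} \omega$ and Stokes applied to $\omega = d\eta$, the difference reduces to $\int_{\gamma_\pm}(\eta - \varphi^*\eta)$, where $\gamma_\pm = \delta([0,\ell]\times\{\pm w_1\})$ is the arc shared between $M_\pm$ and $\delta(R_1)$. The other portions of $\partial M_\pm$ are fixed by $\theta$: the sides on $\partial F$ because $\theta \in \Diff_0(F, \near \partial F)$, and the edges $\delta([0,\ell]\times\{\pm w\})$ because $\theta = \id$ outside $\delta(R)$. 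Moreover $\theta$ agrees with $\varphi$ on a neighborhood of $\gamma_\pm$, so $\theta(\gamma_\pm) = \varphi(\gamma_\pm)$. By the Poincar\'e duality (\refprop{PD}) this boundary integral equals $\Fluxnl[\lambda_{\gamma_\pm}](\varphi)$ up to sign, which vanishes because $\varphi \in \ker(\Fluxn)$; equivalently it equals the swept area $\cO_{\gamma_\pm}(\varphi) = 0$ via \reflem{sweptArea}.

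With the integral equality in hand and $\theta^*\omega = \omega$ near $\partial M_\pm$, a relative Moser argument on each $M_\pm$ (with the buffer $R_3$ providing room to keep the Moser vector field compactly supported away from $\partial \delta(R)$) produces a diffeomorphism $\sigma$ of $F$ compactly supported in $\delta(R) \setminus \delta(R_1)$, equal to $\id$ on $\delta(R_1)$ and near $\partial F$, with $\sigma^*(\theta^*\omega) = \omega$. I would then set $\psi := \theta \circ \sigma$, which satisfies $\psi^*\omega = \omega$, $\supp(\psi) \subset \delta(R)$, and $\psi|_{\delta(R_1)} = \varphi|_{\delta(R_1)}$. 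To confirm $\psi \in \Gn(F)$, the Moser correction can be carried out in parallel over $t \in [0,1]$: the hypothesis $\varphi_t \in \ker(\Fluxn)$ yields the same integral equality for each $\theta_t$, producing a smooth family $\sigma_t$ with $\sigma_0 = \id$, so that $t \mapsto \theta_t \circ \sigma_t$ is an isotopy in $\Gn(F)$ from $\id$ to $\psi$. The principal technical obstacle is the integral equality on $M_\pm$, which requires careful bookkeeping of which portions of $\partial M_\pm$ are fixed by $\theta$, together with a clean use of Poincar\'e duality in the twisted setting to convert the algebraic vanishing $\Fluxn(\varphi) = 0$ into the local geometric statement.
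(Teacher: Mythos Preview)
Your argument is correct and matches the paper's: smooth excision followed by a Moser correction on the two half-strips $R_\pm$, with the vanishing flux giving the equal-area condition via the arcs $\gamma_\pm$. The only notable differences are that the paper inserts $R_3$ at the excision step (applying \reflem{smoothExcision} so that $\supp(\theta_t)\subset\interior(\delta(R_3))$, which is what actually guarantees $\theta^*\omega=\omega$ near the outer edges of $\delta(R)$ before Moser is run), and it verifies $\psi\in\Gn(F)$ simply by observing that $\psi$ is supported in the embedded disk $\delta(R)$ and using connectedness of $\Gn$ of a disk, rather than a parametric Moser.
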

\begin{proof}
    Let $\psi_t$ be the isotopy given by \reflem{smoothExcision} such that 
    $\supp(\psi_t)\subset \interior(\delta(R_3))$ and $\psi_t=\varphi_t$ on $U$. We consider $\Omega=\psi_1^*\omega$.
Say that $$R_-=\delta([0,\ell]\times [-w,-w_1]) \text{ and }R_+=\delta([0,\ell]\times [w_1,w])$$
    and also,
    $$d_-=\delta([0,\ell]\times -w_1) \text{ and }d_+=\delta([0,\ell]\times w_1).$$
  By \reflem{sweptArea} and \reflem{zeroArea}, for any well-arranged triple $(\gamma,\gamma^\flat,j)$ with $\im(\gamma)=d_\alpha,\alpha\in \{+,-\}$, we have that  $\cO_\gamma(\varphi)=0$ and so the (signed) area swept out by each of $d_+$ and $d_-$ under the isotopy $\varphi_t$ from $0$ to $\varphi$ is zero. 
  Therefore, since $\supp(\psi_t)\subset \delta(R_3)$, $\psi_t=\varphi_t$ on $U$ and $d_+,d_-\subset U$,
    $$\int_{R_+}\Omega=\int_{R_+}\omega \text{ and }\int_{R_-}\Omega=\int_{R_-}\omega.$$ 
    By the Moser's theorem for manifolds with corners (\cite[7~Theorem]{Bruveris}), for each $\alpha\in \{+,-\}$, there is  $h_\alpha \in \Diff_0(R_\alpha,\partial R_\alpha)$ such that 
     \begin{itemize}
         \item $h_\alpha^*\Omega=\omega$ on $R_\alpha$;
          \item $h_\alpha$ is the identity on  the boundary of $R_\alpha$.
     \end{itemize}
    Moreover, since $\supp(\psi_1) \subset \interior(\delta(R_3))$ and $\psi_1 \in \Gn(F)$, we have $\Omega = \omega$ on a neighborhood of $\partial R_\alpha \setminus d_\alpha$. Since $\psi = \varphi$ on $U \supset d_\alpha$ and $\varphi^* \omega = \omega$, it follows that $\Omega = \omega$ on a neighborhood of $d_\alpha$ as well.
    Therefore, $\Omega = \omega$ on a neighborhood of $\partial R_\alpha$ and so we can take $h_\alpha$, that is the identity on neighborhood the boundary of $\partial R_\alpha$.
    This is not stated explicitly in \cite[7~Theorem]{Bruveris}, but it is implied by its proof.
     
    Then, we can define $h\in \Diff_0(F,\near 
     \partial F)$ as follows:
     \begin{align*}
             h(x)=
              \begin{dcases}
                     h_+(x) & \text{if } x\in R_+,\\
                h_-(x) & \text{if } x\in R_-,\\
                x & \text{, otherwise.}\\
              \end{dcases}
     \end{align*}
    It follows from the first property of $h_\alpha$ that
    $h^*\Omega=\omega$ and $(\psi_1\circ h)^* \omega=\omega$.
    
    Finally, we claim that $\psi=\psi_1\circ h$ is the desired diffeomorphism.
    It follows from the construction that $\supp(\psi)\subset \delta(R)$ and $\psi=\varphi$ on $\delta(R_1)$.
    Therefore, we only need to show that $\psi\in \Gn(F)$.
    Since $\psi$ is compactly supported on $\delta(R)$ and on a closed embedded disk $\cD$, there is a smooth isotopy from $\psi{\restriction_\cD}$ to $id$ in $\Gn(\cD)$.
    Such an isotopy can be freely extended to a smooth isotopy in $\Gn(F)$ by extending the maps as the identity outside $\cD$.
    This provides a smooth isotopy from $\psi$ to $id$ in $\Gn(F)$. Thus, we are done.\qedhere
\end{proof}

\subsection{Fragmentation lemma}
To prove the fragementation lemma for an element $\varphi$ in $\ker(\Fluxn)$, we make use of \reflem{densityExcision}. 
Hence, we need a smooth isotopy in $\ker(\Fluxn)$ connecting $\varphi$ with $id$. 
The existence of such a isotopy is guaranteed by the following fact: 
\begin{prop}\label{Prop:arcConnected}
    $\ker(\Fluxn)$ is smoothly arcwise connected.
\end{prop}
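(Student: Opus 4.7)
The plan is to upgrade any smooth isotopy $\varphi_t$ joining $\id$ to $\varphi \in \ker(\Fluxn)$ inside $\Gn(N)$ into a new isotopy that stays in $\ker(\Fluxn)$ throughout, by composing with a correcting loop of $\omega$-preserving diffeomorphisms chosen so that its flux cancels the (a priori nonzero) flux of $\varphi_t$ at each time.

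First I would differentiate. Let $X_t$ be the time-dependent vector field generating $\varphi_t$. Using the Cartan-type identity $\mathcal L_{X_t}\eta = d(i_{X_t}\eta)+i_{X_t}\omega$ for twisted forms and the fact that an isotopy acts trivially on (relative) cohomology, one obtains
\[
\tfrac{d}{dt}\Flux(\varphi_t)=-[i_{X_t}\omega]\quad \text{in}\ H^1(N,\partial N; L),
\]
and hence $\Flux(\varphi_t)=-\int_0^t[i_{X_s}\omega]\,ds$. Since $\varphi_1=\varphi\in\ker(\Fluxn)$, the smooth curve $u(t):=\Flux(\varphi_t)$ in $H^1(N,\partial N;L)$ is a loop at $0$.

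Next I would build a universal family of ``model flux flows.'' Using the isomorphism $H^1(N,\partial N;L)\cong H^1_c(\mathring N; L)$ from Section~\ref{Sec:preliminaries}, I choose closed twisted $1$-forms $\omega_1,\ldots,\omega_k$, each compactly supported in $\mathring N$, whose classes form a basis of $H^1(N,\partial N;L)$. For each $i$, let $Y_i$ be the vector field defined by $i_{Y_i}\omega=\omega_i$; it is supported in the interior and $\omega$-preserving, so its time-$s$ flow $\psi^i_s$ lies in $\Gn(N)$ and satisfies $\Flux(\psi^i_s)=-s[\omega_i]$. Writing $u(t)=\sum_i c_i(t)[\omega_i]$ with smooth coefficients $c_i$ satisfying $c_i(0)=c_i(1)=0$, I define the smooth loop
\[
h_t\ =\ \psi^1_{c_1(t)}\circ\psi^2_{c_2(t)}\circ\cdots\circ\psi^k_{c_k(t)}\ \in\ \Gn(N).
\]
Because $\Flux$ is a homomorphism, $\Flux(h_t)=-\sum_i c_i(t)[\omega_i]=-u(t)$, and $h_0=h_1=\id$. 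Setting $\eta_t:=\varphi_t\circ h_t$ gives a smooth isotopy in $\Gn(N)$ from $\id$ to $\varphi$ with $\Flux(\eta_t)=u(t)-u(t)=0$ for every $t$, which is exactly the desired arc in $\ker(\Fluxn)$.

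The only nontrivial step I expect is securing basis representatives $\omega_i$ that are compactly supported in $\mathring N$ (as opposed to merely vanishing on $\partial N$), since we need each $Y_i$ to vanish on a neighborhood of $\partial N$ in order that $\psi^i_s\in\Gn(N)$. This is precisely the content of the collar/excision isomorphism $H^1(N,\partial N;L)\cong H^1_c(\mathring N;L)$ recalled in \refsec{preliminaries}; everything else is routine bookkeeping with the flux homomorphism and Cartan's formula for twisted forms.
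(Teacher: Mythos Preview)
Your proposal is correct and is precisely the standard argument the paper refers to: the paper's proof consists solely of the sentence ``It is obtained by repeating the argument of the proof of \cite[Proposition~4.2.1]{Banyaga97},'' and what you have written is an explicit rendering of that argument in the present non-orientable/twisted-form setting (correcting an arbitrary isotopy by flows of divergence-free vector fields dual to a basis of $H^1(N,\partial N;L)$ so as to kill the flux at each time). Your attention to the point that the basis representatives must be taken compactly supported in $\mathring N$ so that the correcting flows lie in $\Gn(N)$ is exactly the only new wrinkle here, and it is handled by the isomorphism $H^1(N,\partial N;L)\cong H^1_c(\mathring N;L)$ as you say.
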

\begin{proof}
    It is obtained by repeating the argument of the proof of \cite[Proposition~4.2.1]{Banyaga97}
\end{proof}

\begin{lem}[Fragmentation lemma]\label{Lem:frag}
    Let $F$ be a connected, compact surface with non-empty boundary, equipped with an area density  form $\omega$. 
    Then, for each $h\in \ker(\Fluxn)$, there exist finitely many elements $h_1,h_2, \cdots, h_N$ in $\Gn(F)$ such that $h=h_1h_2\cdots h_N$ and each $h_i$ is  compactly supported in an open disk. 
\end{lem}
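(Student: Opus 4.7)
The plan is to combine \refprop{arcConnected} with a fine time-subdivision of an isotopy and iterated applications of the area-preserving excision lemma (\reflem{densityExcision}). By \refprop{arcConnected}, fix a smooth isotopy $\{\varphi_t\}_{t\in[0,1]}\subset \ker(\Fluxn)$ with $\varphi_0=\id$ and $\varphi_1=h$. Using Moser's theorem for area density forms (\refthm{Moser}), cover $F$ by finitely many Darboux-type charts: choose triples of nested rectangles $R_1^{(k)}\subsetneq R_2^{(k)}\subsetneq R_3^{(k)}\subsetneq R^{(k)}$ and area-preserving embeddings $\delta_k\colon R^{(k)}\to F$ (for $k=1,\ldots,N$) satisfying the hypotheses of \reflem{densityExcision}, arranged so that $F$ is covered by the open disks $U_k:=\delta_k(\interior R_1^{(k)})$.

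By uniform continuity, take a partition $0=t_0<t_1<\cdots<t_M=1$ of $[0,1]$ fine enough that, for every $j$ and $k$, the partial isotopy $s\in[t_{j-1},t_j]\mapsto \varphi_s\varphi_{t_{j-1}}^{-1}$ keeps $U_k$ inside $\delta_k(R_2^{(k)})$. Set $\varphi^{(j)}:=\varphi_{t_j}\varphi_{t_{j-1}}^{-1}$, so $h=\varphi^{(M)}\cdots\varphi^{(1)}$, and each $\varphi^{(j)}\in\ker(\Fluxn)$ carries an isotopy satisfying the geometric hypothesis of \reflem{densityExcision} for every chart $U_k$ simultaneously. It therefore suffices to fragment each $\varphi^{(j)}$ into a product of at most $N$ diffeomorphisms, each supported in one of the open disks $\interior\delta_k(R^{(k)})$.

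Fix $j$. Starting from $\chi_0:=\varphi^{(j)}$, run the following induction for $k=1,\ldots,N$: apply \reflem{densityExcision} to $\chi_{k-1}$ with $U=U_k$ and $\delta=\delta_k$, obtaining $h_{j,k}\in\Gn(F)$ supported in $\interior\delta_k(R^{(k)})$ with $h_{j,k}=\chi_{k-1}$ on $U_k$, and set $\chi_k:=h_{j,k}^{-1}\chi_{k-1}$. Since $\interior\delta_k(R^{(k)})$ is contractible, the closed twisted form $\eta-h_{j,k}^*\eta$ (supported in this disk) is exact in $\Omega^{\bullet}(F,\partial F;L)$, so $h_{j,k}\in\ker(\Fluxn)$; hence each $\chi_k$ also lies in $\ker(\Fluxn)$, and an isotopy for $\chi_k$ is obtained by concatenating the isotopy of $\chi_{k-1}$ with a short isotopy of $h_{j,k}^{-1}$ inside the contractible disk. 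After $N$ steps one argues $\chi_N=\id$, giving $\varphi^{(j)}=h_{j,1}h_{j,2}\cdots h_{j,N}$; concatenating over $j$ and relabeling produces the desired decomposition of $h$.

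The principal technical obstacle is the termination of the per-piece induction, i.e., the assertion $\chi_N=\id$. Although $\chi_k$ is the identity on $U_k$ by construction, a subsequent factor $h_{j,k'}^{-1}$ with $\supp(h_{j,k'})\cap U_k\neq\varnothing$ can reintroduce motion on $U_k$, so the ``identity on $U_k$'' property is not automatically preserved. The standard remedy, modeled on Banyaga's proof of the smooth fragmentation lemma \cite{Banyaga97}, is to order the cover and iteratively shrink the nested rectangles $R_1^{(k)}\subsetneq R_2^{(k)}$ so that at step $k$ the portion of $\chi_{k-1}$ to be excised is localized inside $U_k\setminus(U_1\cup\cdots\cup U_{k-1})$; this makes the sequence of identity-regions monotone and forces $\chi_N=\id$ once the cover is exhausted. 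Ensuring that the time partition, the cover, and the ordering are mutually compatible — and that the additional displacement produced by each $h_{j,k}^{-1}$ does not break the geometric hypothesis of \reflem{densityExcision} at the next step — is the heart of the argument.
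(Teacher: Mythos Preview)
Your argument has a genuine gap at exactly the point you flag: the termination $\chi_N=\id$. The ``remedy'' you sketch---ordering the cover and shrinking the nested rectangles so that identity-regions accumulate monotonically---is not carried out, and in this setting it is not clear it can be. Each application of \reflem{densityExcision} produces a $\psi$ whose support may fill the entire rectangle $\delta_k(R^{(k)})$, not just a small portion of it, so shrinking the inner rectangle $R_1^{(k)}$ does nothing to shrink the support of the correction $h_{j,k}$; the interference between overlapping rectangles therefore persists regardless of how the cover is refined. There is a second difficulty you pass over: to apply \reflem{densityExcision} to $\chi_k$ at step $k+1$ you need an isotopy of $\chi_k$ in $\ker(\Fluxn)$ satisfying the displacement hypothesis $\chi_{k,t}(U_{k+1})\subset\delta_{k+1}(R_2^{(k+1)})$. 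The concatenated isotopy you propose includes the isotopy of $h_{j,k}^{-1}$, which lives in the $k$-th rectangle and can certainly move $U_{k+1}$ out of $\delta_{k+1}(R_2^{(k+1)})$ when the rectangles overlap.

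The paper avoids all of this by \emph{not} covering $F$. Instead it chooses a cut system (\refprop{cutSystem}): finitely many properly embedded arcs $\gamma_i$ with \emph{pairwise disjoint} tubular neighborhoods $T_i$, such that $F\setminus\bigcup_i\im(\gamma_i)$ is a single open disk. After the same time-subdivision, one applies \reflem{densityExcision} once in each $T_i$ to the \emph{same} small piece $f_k$ (not to successive remainders), obtaining $\psi_{k,i}\in\Gn(F)$ with $\supp(\psi_{k,i})\subset T_i$ and $\psi_{k,i}=f_k$ on a thin sub-rectangle $S_i$ containing $\im(\gamma_i)$. Because the $T_i$ are disjoint, the $\psi_{k,i}$ have disjoint supports and there is no interference whatsoever; the remainder $\psi_{k,0}:=\psi_{k,1}^{-1}\cdots\psi_{k,n}^{-1}f_k$ is then automatically the identity on every $\im(\gamma_i)$, hence supported in the complementary open disk. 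No inductive termination argument is needed---the disjointness of the cut system does the work that your overlapping cover cannot.
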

\begin{proof}
Take a cut system $\{(\gamma_i,\gamma^b_i, j_i)\}_{i=1}^n$ and write $T_i=\im(j_i)$.
Since $\closure{T_i}$ is a closed rectangle, by Moser's theorem for manifolds with corners (\cite[7~Theorem]{Bruveris}), we can take a smooth area-preserving embedding $\delta_i:R^i\to \closure{T_i}$, as in \reflem{densityExcision}, where $R_i=[0,\ell_i]\times [-w_i,w_i]$. 
Then, by taking the image of some restriction of $\delta_i$ on the second coordinate, we can take a smaller rectangle $S_i$ such that $S_i\subset T_i$. 
Choose contractible open subsets $U_i$ of $F$ such that $S_i\subset U_i \subset \closure{U_i}\subset T_i$.

By \refprop{arcConnected}, there is an isotopy $h_t$ in $\ker(\Fluxn)$ from $id$ to $h$.
For a sufficiently large $\fM\in \NN$, 
\[
h_{k/\fM}^{-1}\circ h_{(k+t)/\fM}(\closure{U_i})\subset T_i
\]
for all $(k,t)\in \{0,1,\cdots, \fM-1\}\times[0,1]$ and for all $i\in \{1,2,\cdots, n\}$.
Fix such an $\fM$ and write \[f_k=h_{k/\fM}^{-1}\circ h_{(k+1)/\fM}, \ k\in \{0,1,\cdots, \fM-1\} \text{ and } f_{k,t}=h_{k/\fM}^{-1}\circ h_{(k+t)/\fM},\ t\in [0,1].\]
Note that $f_{k,t}$ is an isotopy from $id$ to $f_k$ in $\ker(\Fluxn)$.

Since \(h=f_0f_1\cdots f_{\fM-1}\), it is enough to show the fragmentation lemma for the case of $f_k$.
By the choice of $\fM$ and \reflem{densityExcision}, there are $\psi_{k,i}\in \Gn(F)$ such that $\supp(\psi_{k,i})\subset T_i$ and $\psi_{k,i}=f_k$ on $S_i$.

Now, we set $\psi_{k,0}=\psi_{k,1}^{-1}\cdots \psi_{k,n}^{-1}f_k$.
Since  $\supp(\psi_{k,i})\cap \supp(\psi_{k,j})=\varnothing$ and $\psi_{k,i}=f_k$ on $S_i$, 
$\supp(\psi_0)$ does not intersect $\im(\gamma_i)$, and so, it is supported in an open disk.
Thus, \[f_k=\psi_{k,n}\cdots \psi_{k,1}\psi_{k,0}\] and we are done.
\end{proof}

We end this section with the following which follows from \reflem{zeroArea}:

\begin{lem}\label{Lem:zeroFluxn}
    Let $F$ be a connected, compact surface with non-empty boundary, equipped with an area density  form $\omega$. 
    If an element $h$ in $\Gn(F)$ is supported on an open disk, then $\Fluxn(h)=0$.
\end{lem}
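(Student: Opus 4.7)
The plan is to compute the pairing $\Fluxnl[\lambda](h)$ directly for an arbitrary closed $1$-form $\lambda$, show that it vanishes, and then apply \reflem{zeroArea} with $\lambda$ specializing to the Poincar\'{e} duals of a cut system. The key observation is that $\eta - h^*\eta$ is compactly supported inside a small disk, so its pairing against any $\lambda$ can be localized and then killed by Stokes' theorem using the local exactness of $\lambda$ on the disk.

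Let $D \subset F$ be the open disk with $\supp(h) \subset D$. Since $h$ fixes a neighborhood of $\partial F$, we may shrink $D$ so that $\overline{D} \cap \partial F = \varnothing$, and pick a compact disk $D'$ with $\supp(h) \subset \interior(D') \subset D' \subset D$. For any closed $\lambda \in \Omega^1(F)$, the fact that $h$ is the identity outside $\supp(h)$ forces $\eta - h^*\eta$ to be supported in $\interior(D')$, so
\[
\Fluxnl[\lambda](h) = \int_F (\eta - h^*\eta) \wedge \lambda = \int_{D'} (\eta - h^*\eta) \wedge \lambda .
\]
Since $D$ is contractible and orientable, $L|_{D}$ is trivial; fix a nowhere-zero constant section $e_D$ and write $(\eta - h^*\eta)|_{D} = \widetilde{\alpha} \otimes e_D$ for an ordinary closed $1$-form $\widetilde{\alpha}$ on $D$ supported in $\interior(D')$. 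By simple connectivity of $D$, $\lambda|_{D} = dg$ for some $g \in C^\infty(D)$. Then $d\widetilde{\alpha} = 0$ gives $\widetilde{\alpha} \wedge dg = -d(g\widetilde{\alpha})$, and Stokes' theorem on $D'$, together with $g\widetilde{\alpha}$ vanishing near $\partial D'$, produces $\int_{D'} \widetilde{\alpha} \wedge dg = 0$. Transporting back through the trivialization yields $\Fluxnl[\lambda](h) = 0$.

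Specializing to the Poincar\'{e} duals $\lambda_i$ of a cut system, which exists by \refprop{cutSystem}, \reflem{zeroArea} immediately gives $\Fluxn(h) = 0$. Nothing in this plan is delicate; the only bookkeeping is tracking the trivialization of $L|_D$ when passing from twisted to ordinary forms. A more geometric alternative, closer in spirit to \refsec{swept_area}, would instead choose the cut system so that each $\gamma_i$ is disjoint from $\overline{D}$ (achievable by an ambient isotopy pushing the one-dimensional arcs off the interior disk $\overline{D}$); then $h \circ \gamma_i = \gamma_i$ pointwise, the constant homotopy shows $\cO_{\gamma_i}(h) = 0$, and \reflem{sweptArea} combined with \reflem{zeroArea} concludes. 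The mild obstacle in that alternative route is justifying the ambient-isotopy step, which is why the direct Stokes computation above is cleaner.
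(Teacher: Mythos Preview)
Your proof is correct and follows the same route the paper points to: reduce to the criterion of \reflem{zeroArea} by showing each pairing $\Fluxnl[\lambda_i](h)$ vanishes. The paper records this lemma as an immediate consequence of \reflem{zeroArea} without writing out the verification; your Stokes computation (localize $\eta-h^*\eta$ to the disk, write $\lambda=dg$ there, integrate by parts) is exactly the kind of argument that justifies that one-line claim, and your alternative via swept area is the geometric paraphrase of the same idea.
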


\section{Simplicity of $\ker(\Fluxn)$}
In this section, we discuss the simplicity of $\ker(\Fluxn)$ of a connected, compact, non-orientable surface $N$ with non-empty boundary. 
The key idea of the proof is the \emph{Cell Division Trick},  which is a generic phenomenon on non-orientable surfaces.

\begin{lem}[Cell Division Trick]\label{Lem:cellDivision}
Let $M$ be a M\"obius band, equipped with an area density  form $\omega$. 
If an element $h$ in $\ker(\Fluxn)$ is compactly supported in an open disk, then there are open disks $U,V$ and $u,v\in \Gn(M)$ such that $u$ and $v$ are compactly supported in $U$ and $V$, respectively, $\Cal_{U}(u)=\Cal_{V}(v)=0$, and $h=u v$. 
\end{lem}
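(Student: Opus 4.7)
The plan is to exploit the non-orientability of $M$ by choosing two open disks $U, V \subset M$ (obtained by cutting $M$ along two disjoint essential arcs) whose orientation bundles $L|_U$ and $L|_V$ admit constant sections that disagree in sign on part of $U \cap V$. Two small helper diffeomorphisms with carefully chosen local Calabi invariants will then cancel both $\Cal_U(u)$ and $\Cal_V(v)$ simultaneously while arranging $uv = h$.

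First I would set up coordinates. Using Moser's theorem (\refthm{Moser}) I identify $M$, area-preservingly, with the standard M\"obius band $[0,1]\times[-1,1]/(0,y)\sim(1,-y)$, chosen so that $\supp(h) \subset (0,\tfrac{1}{2})\times[-1,1]$. Take the two disjoint essential arcs $\alpha := \{\tfrac{1}{2}\}\times[-1,1]$ and $\alpha' := \{0\}\times[-1,1]$; each cuts $M$ open into a disk, so $U := M \setminus \alpha$ and $V := M \setminus \alpha'$ are contractible (hence open disks in the sense used throughout the paper). Their intersection has exactly two components
\[
    C_1 := (0,\tfrac{1}{2})\times[-1,1], \qquad C_2 := (\tfrac{1}{2},1)\times[-1,1],
\]
each an open disk, with $\supp(h) \subset C_1$.

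The key step is the sign analysis. Parametrize $V \cong (0,1)\times[-1,1]$ by inclusion and $U \cong (\tfrac{1}{2},\tfrac{3}{2})\times[-1,1]$ by gluing $[0,\tfrac{1}{2})\times[-1,1]$ onto $(1,\tfrac{3}{2})\times[-1,1]$ via $(x,y) \mapsto (x+1,-y)$. Let $e_U$ and $e_V$ be the constant sections of $L|_U$ and $L|_V$ equal to $+1$ in these respective trivializations. On $C_2$ the two parametrizations agree, so $e_U|_{C_2} = e_V|_{C_2}$; but on $C_1$ the gluing map has Jacobian determinant $-1$, so $e_U|_{C_1} = -e_V|_{C_1}$. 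Consequently, for any $\phi \in \Gn(M)$ compactly supported in $C_1$ one has $\Cal_U(\phi) = -\Cal_V(\phi)$, while for any $\phi$ compactly supported in $C_2$ one has $\Cal_U(\phi) = \Cal_V(\phi)$.

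With this in hand the construction is immediate. Set $c := \Cal_V(h)$, so $\Cal_U(h) = -c$. By surjectivity of the local Calabi invariant on each open disk of positive area, pick $k_1 \in \Gn(M)$ compactly supported in $C_1$ with $\Cal_V(k_1) = -c/2$ and $k_2 \in \Gn(M)$ compactly supported in $C_2$ with $\Cal_V(k_2) = c/2$. Define
\[
    u := h\, k_1\, k_2, \qquad v := k_2^{-1}\, k_1^{-1}.
\]
Then $uv = h$, and $\supp(u), \supp(v) \subset C_1 \cup C_2$, so $u$ is compactly supported in $U$ and $v$ in $V$. By additivity of $\Cal_U$ and $\Cal_V$ together with the sign analysis,
\[
    \Cal_U(u) = -c + \tfrac{c}{2} + \tfrac{c}{2} = 0, \qquad \Cal_V(v) = -\tfrac{c}{2} + \tfrac{c}{2} = 0.
\]

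The main obstacle is the sign analysis: producing two cuts of $M$ whose constant orientation-bundle sections exhibit the needed sign flip on exactly one component of their intersection. This is precisely where the non-orientability of $M$ is essentially used, and it is the heart of the cell division trick.
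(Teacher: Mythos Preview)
Your proof is correct and follows essentially the same approach as the paper: choose two contractible open sets $U,V$ overlapping in two disk components, arrange constant sections $e_U,e_V$ of the orientation bundle to agree on one component and disagree on the other, then use two helper diffeomorphisms supported in the overlap to kill both local Calabi invariants. The only cosmetic difference is that the paper places $\supp(h)$ in $U\setminus\overline{V}$ (outside the overlap) whereas you place it inside one overlap component $C_1$; both choices make the bookkeeping work.
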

\begin{proof}
Since $h$ is compactly supported in an open disk, we can take two open  subsets $U,V$ satisfying the followings:
\begin{itemize}
    \item $U$ and $V$ are open disks bounded by Jordan curves;
    \item $U\cap V$ is the disjoint union of open disks, $A$ and $B$;
    \item $U\cup V$ is homeomorphic to the open M\"obius band;
    \item $\supp(h)\subset U \setminus \closure{V}$.
\end{itemize}
Then, we fix standard locally constant sections $e_U$ and $e_V$ over $U$ and $V$, respectively, such that $e_U=e_V$ in $A$ and $e_U=-e_V$ in $B$.

If the local Calabi invariant of $h$ on $U$ with respect to $e_U$ vanishes, namely, $\Cal_U(h)=0$, then we are done by putting $u=h$ and $v=id$.

Otherwise, $\Cal_U(h)\neq 0$.
Write $c=\Cal_U(h)/2$ and set locally constant sections $e_A$ and $e_B$ as the restriction of $e_U$ on $A$ and $B$, respectively.
By the surjectivity of Calabi invariant, there are $g_A$ and $g_B$ in $\Gn(A)$ and $\Gn(B)$, repsectively, such that $\Cal_A(g_A)=\Cal_B(g_B)=-c$ with respect to $e_A$ and $e_B$, respectively.
Since $e_A$ and $e_B$ are the restriction of $e_U$, we have
\[
\Cal_U(g_A)=\Cal_A(g_A)=-c \text{ and }\Cal_U(g_B)=\Cal_B(g_B)=-c
\]
with respect to $e_U$,
and 
since $e_U=e_V$ on $A$ and  $e_U=-e_V$ on $B$,
\[
\Cal_V(g_A)=\Cal_A(g_A)=-c \text{ and }\Cal_V(g_B)=-\Cal_B(g_B)=c
\]
with respect to $e_V$.
This implies that $\Cal_U(hg_Ag_B)=0$ and $\Cal_V(g_B^{-1}g_A^{-1})=0$.
Thus, $u=hg_Ag_B$ and $v=g_B^{-1}g_A^{-1}$ are the desired elements.   
\end{proof}
\begin{rmk}
    The name \emph{Cell division trick} comes from the technique of using two small elements $g_A$ and $g_B$ to split the twisting effect of $h$.
\end{rmk}

Now, we can promote \reflem{frag} in the following form:
\begin{lem}[Strong fragmentation lemma]\label{Lem:strongFrag}
Let $N$ be a compact, non-orientable surface with non-empty boundary, equipped with an area density  form $\omega$. Then,
for any $h\in \ker(\Fluxn)$, there are $h_1,\cdots, h_N$ in $\ker(\Fluxn)$ such that $h=h_1h_2\cdots h_N$, $h_i$ are supported in open disks $B_i$ and $\Cal_{B_i}(h_i)=0$.
\end{lem}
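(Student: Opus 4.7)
The plan is to combine the ordinary fragmentation lemma \reflem{frag} with the Cell Division Trick \reflem{cellDivision}, using non-orientability of $N$ to arrange a Möbius band around each small piece. First, I would apply \reflem{frag} to $h \in \ker(\Fluxn)$ to obtain a factorization $h = g_1 g_2 \cdots g_N$ where each $g_i \in \Gn(N)$ is compactly supported in an open disk $D_i \subset N$. By \reflem{zeroFluxn}, each such $g_i$ already lies in $\ker(\Fluxn)$, so it only remains to further split each $g_i$ into pieces with vanishing local Calabi invariants.

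The main point is that the Cell Division Trick is stated in a Möbius band, so I need to embed each $D_i$ inside an open Möbius band $M_i \subset N$. Since $N$ is connected and non-orientable, it contains a closed Möbius band $M_0$; after choosing an orientation-reversing embedded loop disjoint from the compact set $\overline{D_i}$, I can thicken a connecting arc together with $\overline{D_i}$ and a tubular neighborhood of the loop to obtain an open surface $M_i \supset \overline{D_i}$ which is connected, non-orientable, has a single boundary circle, and has Euler characteristic $0$; by the classification of surfaces such an $M_i$ is homeomorphic to an open Möbius band. Restricting $g_i$ to $M_i$ gives an element of $\Gn(M_i)$ (since $\supp(g_i) \subset D_i \subset M_i$), and by \reflem{zeroFluxn} applied to $M_i$ it again sits in $\ker(\Fluxn^{M_i})$.

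Now I apply \reflem{cellDivision} to $g_i$ inside $M_i$: it yields $u_i, v_i \in \Gn(M_i)$ compactly supported in open disks $U_i, V_i \subset M_i$ with $\Cal_{U_i}(u_i) = \Cal_{V_i}(v_i) = 0$ and $g_i = u_i v_i$. Extending $u_i$ and $v_i$ by the identity to all of $N$ keeps them in $\Gn(N)$; their supports remain in the disks $U_i, V_i \subset N$, so by \reflem{zeroFluxn} they lie in $\ker(\Fluxn)$, and the local Calabi invariants, being local integrals with respect to a constant section of $L$ over a contractible set, are unchanged by the extension. Concatenating gives
\[
    h = u_1 v_1 u_2 v_2 \cdots u_N v_N,
\]
which is the required decomposition. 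The main obstacle I anticipate is the topological step of producing the Möbius band $M_i$ containing $\overline{D_i}$; everything else is formal once this is in hand, but this step genuinely uses non-orientability of $N$ and has to be justified with a bit of care about the classification of the thickened neighborhood.
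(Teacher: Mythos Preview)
Your proposal is correct and follows essentially the same route as the paper: apply \reflem{frag}, observe via \reflem{zeroFluxn} that each fragment already has zero flux, engulf each fragment's support in an embedded M\"obius band, and then invoke \reflem{cellDivision}. The only difference is cosmetic: the paper simply asserts the existence of the M\"obius band $M_i$ by pointing to \reffig{surface}, whereas you sketch an explicit construction (orientation-reversing loop disjoint from $\overline{D_i}$, connecting strip, classification), which is a reasonable way to fill in that step.
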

\begin{proof}
    By \reflem{frag}, there exist finitely many elements $g_1,g_2,\cdots, g_N$ in $\Gn(F)$ such that $h=g_1g_2\cdots g_N$ and each $g_i$ is compactly supported in an open disk $U_i$. 
    Moreover, by \reflem{zeroFluxn}, $\Fluxn(g_i)=0$.
    Since $N$ is non-orientable, for each $i$, we can take an embedded closed M\"obius band $M_i$, that  contains $U_i$ (see \reffig{surface}).
    Thus, the desired result follows from \reflem{cellDivision} and \reflem{zeroFluxn}. 
\end{proof}

Under \reflem{cellDivision} and \reflem{strongFrag}, the simplicity of $\ker(\Fluxn)$ for non-orientable surfaces follows from Thurston's trick, explained in \cite[Section 2.1]{Banyaga97}:
\begin{restate}{Theorem}{Thm:simplicity}[Simplicity of $\ker(\Fluxn)$]
Let $N$ be a compact, non-orientable surface $N$ with non-empty boundary, equipped with an area density  form $\omega$. Then, $\ker(\Fluxn)$ is simple.
\end{restate}
\begin{proof}
    Write $G=\ker(\Fluxn)$. 
    It is enough to show that for any $\varphi \in G \setminus \{id\}$, the normalizer $N_G(\varphi)$ is $G$.
    Fix $\varphi\in G\setminus \{id\}$ and take a small open disk $B$ in $N$ and $g\in G$ satisfying the followings:
    \begin{itemize}
        \item $B\cap \varphi(B)=\varnothing$;
        \item $\varphi(B)\cap g(\varphi(B))=\varnothing$;
        \item $g$ is the identity on $B$.
    \end{itemize} 
    Note that $B\subset \closure{B}\subset \supp(\varphi)\subset \interior(N)$.
    \begin{claim}\label{Clm:commutator}
        For any $u,v\in \ker(\Cal_B)$, $[u,v]\in N_G(\varphi)$.
    \end{claim}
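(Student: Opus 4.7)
The plan is to apply the classical Thurston displacement trick, exploiting the hypothesis $B\cap \varphi(B)=\varnothing$. Since $u,v\in \ker(\Cal_B)\subset \Gn(N)_B$, both $u$ and $v$ are compactly supported in $B$. The strategy is to introduce $\alpha=[u,\varphi]=u\varphi u^{-1}\varphi^{-1}$, prove the identity $[\alpha,v]=[u,v]$ via a support calculation, and then observe that $\alpha$ already lies in the normal closure $N_G(\varphi)$.

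The main technical point is the support description of $\alpha$. Because $B\cap \varphi(B)=\varnothing$ (and hence also $B\cap \varphi^{-1}(B)=\varnothing$), a direct point-by-point computation shows that $\alpha$ is supported in $B\cup \varphi(B)$, restricts to $u$ on $B$, and restricts to $\varphi u^{-1}\varphi^{-1}$ on $\varphi(B)$. This is the only step where the displacement hypothesis is essential.

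Next, since $v$ is supported in $B$ and $\alpha|_B=u|_B$, the support of $\alpha v\alpha^{-1}$ is contained in $\alpha(B)=u(B)=B$, and on $B$ the conjugation reduces to $uvu^{-1}$. Hence $\alpha v\alpha^{-1}=uvu^{-1}$ as elements of $\Gn(N)$, which yields
\[
    [\alpha,v]=\alpha v\alpha^{-1}v^{-1}=uvu^{-1}v^{-1}=[u,v].
\]
Finally, I would write $\alpha=(u\varphi u^{-1})\cdot \varphi^{-1}$ as a product of a conjugate of $\varphi$ and of $\varphi^{-1}$, each of which lies in $N_G(\varphi)$; hence $\alpha\in N_G(\varphi)$. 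Since $N_G(\varphi)$ is normal in $G$ and $v\in G$, also $v\alpha^{-1}v^{-1}\in N_G(\varphi)$, so $[\alpha,v]=\alpha\cdot(v\alpha^{-1}v^{-1})\in N_G(\varphi)$. Combined with the preceding identity this gives $[u,v]\in N_G(\varphi)$.

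I do not expect any serious obstacle. Notably, the argument uses neither the vanishing of the local Calabi invariants nor the auxiliary element $g$ from the preamble; both should instead enter the subsequent step promoting Claim~\ref{Clm:commutator} to the equality $N_G(\varphi)=G$, where one plans to combine \reflem{strongFrag}, the classical simplicity of $\ker(\Cal)$ on a disk (so that each fragment is a product of commutators of elements of some $\ker(\Cal_{B_i})$), and a conjugation by $g$ to transport the disks $B_i$ to positions displaced by $\varphi$. The only delicate verification inside Claim~\ref{Clm:commutator} itself is the support computation for $\alpha$.
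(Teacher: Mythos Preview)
Your argument is correct, and it is in fact simpler than the paper's. Both proofs start from the same observation that $\alpha=[u,\varphi]$ restricts to $u$ on $B$ and to $\varphi u^{-1}\varphi^{-1}$ on $\varphi(B)$, but the paper then does the analogous decomposition for $\beta=[v,g\varphi g^{-1}]$ (which restricts to $v$ on $B$ and to $(g\varphi)v^{-1}(g\varphi)^{-1}$ on $g\varphi(B)$) and concludes via the double-commutator identity
\[
    [u,v]=[[u,\varphi],[v,g\varphi g^{-1}]],
\]
which requires the three disks $B$, $\varphi(B)$, $g\varphi(B)$ to be pairwise disjoint. Your single-commutator identity $[u,v]=[[u,\varphi],v]$ uses only $B\cap\varphi(B)=\varnothing$ and dispenses with the second displacement entirely. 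In particular, contrary to your closing remark, the auxiliary element $g$ is not used anywhere else in the paper's proof of \refthm{simplicity}; it appears solely in the paper's proof of this claim, so your route actually renders the choice of $g$ (and the hypotheses $\varphi(B)\cap g(\varphi(B))=\varnothing$, $g|_B=\id$) superfluous. You are right that the hypothesis $u,v\in\ker(\Cal_B)$ is not used in the proof of the claim itself (neither in yours nor in the paper's); it is only relevant later, when one invokes the perfectness of $\ker(\Cal)$ on a disk to write fragments as products of commutators to which the claim applies.
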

    \begin{proof}[Proof of claim]
        First, note that  \[[u,\varphi]=u\varphi u^{-1}\cdot \varphi^{-1}\in N_G(\varphi).\] 
        Observe that $[u,\varphi]$ is decomposed into $u$ and $\varphi u^{-1} \varphi^{-1}$, which are supported on $B$ and $\varphi(B)$, respectively,
        
        On the other hand,
        \[
        [v,g\varphi g^{-1}]=(vg)\varphi (vg)^{-1}\cdot g\varphi^{-1} g^{-1}\in N_G(\varphi).
        \]
        Moreover, $v=g^{-1}vg$ since $v$ is supported on $B$ and $g$ is the identity on $B$.
        Therefore, 
        \[ 
        v^{-1}[v, g\varphi g^{-1}]=g\varphi g^{-1} \cdot v^{-1} \cdot g\varphi^{-1} g^{-1}=
         (g\varphi) \cdot v^{-1} \cdot (g\varphi)^{-1}
        \]
         and $v^{-1}[v, g\varphi g^{-1}]$ is supported on $g\varphi(B)$.
         Therefore, 
         $[v,g\varphi g^{-1}]$ is decomposed into $v$ and $v^{-1}[v,g\varphi g^{-1}]$ which are supported on $B$ and $g\varphi(B)$. 

         Since $B$, $\varphi(B)$ and $g\varphi(B)$ are pairwise disjoint,
         it follows from the above decompositions that 
         \[
         [u,v]=[[u,\varphi],[v,g\varphi g^{-1}]]\in N_G(\varphi).
         \]
    \end{proof}

Now, we end up the proof, showing that for any $h\in G$, $h\in N_G(\varphi)$.
Fix a non-trivial element $h\in G$.
By \reflem{strongFrag}, there are $h_1,\dots, h_N$ in $G$ such that $h=h_1h_2\cdots h_N$, $h_i$ is supported on an open disk $V_i$ and $\Cal_{V_i}(h_i)=0$.
Once we show $h_i\in N_G(\varphi)$ for all $i\in \{1,2,\cdots, \fN\}$, it follows that $h=h_1h_2\cdots h_\fN\in N_G(\varphi)$.
Hence, it is enough to show that if an element $h$ in $G$ is supported on an open disk $D$ and $\Cal_D(h)=0$, then $h\in N_G(\varphi)$.
By shrinking $D$ if necessary, we can assume that $D$ is bounded by a Jordan curve, contained in $\interior(N)$.

\begin{claim}[Transitivity]\label{Clm:transitivity}
    For any closed subset $A$ of $\interior(N)$, there is an open disk cover $\cU=\{U_i\}_{i\in I}$ of $A$ and associated subset $\{\alpha_i\}_{i\in I}$ of $G$ such that $\closure{U_i}\subset \interior(N)$ and  $\alpha_i(U_i)\subset B$.
\end{claim}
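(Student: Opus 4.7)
The plan is to establish the claim pointwise: for each $x\in A$ I will produce an open disk $U_x\ni x$ with $\closure{U_x}\subset\interior(N)$ and an element $\alpha_x\in G:=\ker(\Fluxn)$ such that $\alpha_x(U_x)\subset B$. The collection $\cU=\{U_x\}_{x\in A}$ together with the associated family $\{\alpha_x\}_{x\in A}$ then provides the desired cover.

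First I fix $x\in A$ and, using connectedness of $\interior(N)$, choose a smoothly embedded arc $\sigma\colon [0,1]\to \interior(N)$ from $x$ to a point of $B$. Thickening $\sigma$ to a narrow tubular strip and applying \refthm{Moser} to transport a sufficiently small open disk $U_x\ni x$ along this strip, I obtain an element $\alpha_1\in\Gn(N)$ with $\alpha_1(\closure{U_x})\subset B$. A priori $\alpha_1$ is only area-preserving, and its flux $\Fluxn(\alpha_1)\in H^1(N,\partial N;L)$ may be nonzero, so $\alpha_1$ need not lie in $G$.

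Next I correct the flux of $\alpha_1$ without disturbing the image of $U_x$. The proof that $\Fluxn$ is surjective constructs explicit generators via one-parameter families of $\omega$-preserving diffeomorphisms supported in small annular or M\"obius neighborhoods $N_i$ of curves $\beta_i$ transverse to a cut system $\{(\gamma_i,\gamma_i^\flat,j_i)\}_{i=1}^{n}$ (see \refprop{cutSystem}). Since $\closure{B}$ is a single closed disk in $\interior(N)$, I can isotope this entire configuration off of $\closure{B}$ by an ambient isotopy supported near $\closure{B}$ without altering the Poincar\'e duals $\lambda_i$ up to exact forms; the resulting $N_i$ then lie in $N\setminus\closure{B}$. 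This yields $\beta\in\Gn(N)$ with $\supp(\beta)\subset\bigcup_i N_i$ and $\Fluxn(\beta)=-\Fluxn(\alpha_1)$. Setting $\alpha_x:=\beta\circ\alpha_1$, I obtain $\Fluxn(\alpha_x)=0$, so $\alpha_x\in G$, and $\alpha_x(U_x)=\beta(\alpha_1(U_x))=\alpha_1(U_x)\subset B$ because $\beta$ is the identity on $B$.

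The hardest part will be the geometric tidiness in the flux-correction step, namely ensuring that the cut arcs, their transverse curves, and the neighborhoods $N_i$ can all be pushed off the closed disk $\closure{B}\subset\interior(N)$ while still representing a basis of $H^1(N,\partial N;L)$ via their duals. This reduces to a standard transversality argument in the $2$-dimensional surface $\interior(N)$: the arcs and curves are $1$-dimensional, so a generic ambient isotopy supported in a neighborhood of $\closure{B}$ pushes them off $\closure{B}$ without changing their homotopy classes, and hence without changing their Poincar\'e-dual cohomology classes. Once this tidiness is in place, the pointwise construction above establishes the claim.
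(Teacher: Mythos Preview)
Your argument is correct and follows the same reduction as the paper: both show the claim pointwise by exhibiting, for each $x\in\interior(N)$, an element of $G$ carrying a small disk around $x$ into $B$. The paper's proof is a single sentence asserting this transitivity of $G$ on $\interior(N)$ as a known fact, whereas you supply an explicit construction.

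One simplification: your flux-correction step is unnecessary. Your element $\alpha_1$ is supported in a tubular strip around the embedded arc $\sigma\subset\interior(N)$; this strip is an open disk, so \reflem{zeroFluxn} gives $\Fluxn(\alpha_1)=0$ directly. Thus $\alpha_1$ already lies in $G$, and you may take $\alpha_x=\alpha_1$ without invoking the surjectivity of $\Fluxn$ or isotoping a cut system off $\closure{B}$.
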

\begin{proof}[Proof of claim]
    It follows from the fact that for any $x\in \interior(N)$, there is a $\alpha \in G$ such that $\alpha(x)\in B$.
\end{proof}
Choose such an open disk cover $\cU=\{U_i\}_{i\in I}$ of $\closure{D}$ and associated subset $\{\alpha_i\}_{i\in I}$ of $G$ for $\supp(h)$.
By applying the symplectic fragmentation lemma (\reflem{sympFrag}) for $D$, there are elements $f_1,f_2,\dots, f_\fM$ in $G$ and  $V_{1},V_{2},\dots, V_{\fM}$ in $\cU$ such that
$h=f_{1}f_{2}\cdots f_{\fM}$, $f_i$ are supported on $V_i$ and $\Cal_{D}(f_i)=\Cal_{V_{i}}(f_{i})=0$.
After relabeling if necessary, we can assume that $V_i=U_i$.
Since $\ker(\Cal_{U_i})$ is perfect, there are $u_{i,j}, v_{i,j}$ in $\ker(\Cal_{U_i})$ such that 
\[
f_i=[u_{i,1}, v_{i,1}][u_{i,2}, v_{i,2}]\cdots [u_{i,\fM_i}, v_{i,\fM_i}].
\]
Since $\alpha_iu_{i,j}\alpha_i^{-1}$ and $\alpha_iv_{i,j}\alpha_i^{-1}$ are in $\ker(\Cal_B)$, 
by \refclm{commutator},
\[\alpha_i[u_{i,j},v_{i,j}]\alpha_i^{-1}=[\alpha_iu_{i,j}\alpha_i^{-1},\alpha_iv_{i,j}\alpha_i^{-1}]\in N_G(\varphi)\]
and so
$[u_{i,j},v_{i,j}]\in N_{G}(\varphi)$.
Thus, $f_i\in N_G(\varphi)$ and 
\[
h=f_1f_2\cdots f_\fM\in N_G(\varphi).
\]
\end{proof}

\appendix
\section{Differential topology with twisted forms}\label{Appendix}

\subsection{Moser's theorem for volume density  forms}\label{Sec:MoserThm}
In \cite{Moser65}, Moser  proved that if $\tau_t$ is a 1-parameter family of volume forms on a connected and compact manifold $\cN$ without boundary, then the condition $\int_\cN \tau_t=\int_\cN \tau_0$, for all $t$, implies the existence of an isotopy $\Phi_t$ of $\cN$ such that $\Phi^*_t\tau_t=\tau_0$.
In fact, since he proved the theorem in terms of odd differential forms, his theorem includes the case of non-orientable manifolds without boundary. After that, Banyaga \cite{Banyaga74} proved the following version of Moser's theorm, which is for an orientable manifold with non-empty boundary.

\begin{thm}
Let $\cN$ be a compact, connected, orientiable, $n$-dimansional manifold with boundary $\partial \cN$ and $\tau_t$ a $1$-parameter family of volume forms. The following conditions are equivalent:
\begin{itemize}
    \item[(i)] $\int_\cN \tau_t=\int_\cN \tau_0$, for all $t$;
    \item[(ii)] There exists a $1$-parameter family $\alpha_t$  of $(n-1)$-forms such that $\partial \tau_t /\partial t=d \alpha_t$ and $\alpha_t(x)=0$ for all $x\in \partial \cN$;
    \item[(iii)] There exists an isotopy $\Phi_t$ on $\cN$ such that 
    $$\Phi^*_t\tau_t=\tau_0, \Phi_0=id \text{ and }\Phi_t|\partial \cN=id.$$
\end{itemize}
\end{thm}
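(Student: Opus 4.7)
The plan is to prove the three implications $(\mathrm{iii})\Rightarrow(\mathrm{i})\Rightarrow(\mathrm{ii})\Rightarrow(\mathrm{iii})$. The first implication is the easy one: if $\Phi_t^*\tau_t=\tau_0$ and $\Phi_t$ is a diffeomorphism of $\cN$ (with $\Phi_t|_{\partial\cN}=\id$, in particular $\Phi_t$ is orientation preserving), then by change of variables $\int_\cN\tau_t=\int_\cN\Phi_t^*\tau_t=\int_\cN\tau_0$.

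For $(\mathrm{ii})\Rightarrow(\mathrm{iii})$ I would run the standard Moser trick. Given $\alpha_t$ with $\partial\tau_t/\partial t=d\alpha_t$ and $\alpha_t|_{\partial\cN}=0$, I use the fact that $\tau_t$ is a nowhere-vanishing top form to define a time-dependent vector field $X_t$ uniquely by $i_{X_t}\tau_t=-\alpha_t$. The boundary condition on $\alpha_t$ forces $X_t$ to vanish on $\partial\cN$, so the flow $\Phi_t$ is defined for all $t\in[0,1]$, stays inside $\cN$, and fixes $\partial\cN$ pointwise. Differentiating $\Phi_t^*\tau_t$ in $t$, using $d\tau_t=0$ (top degree) and Cartan's formula, one gets
\[
\frac{d}{dt}\Phi_t^*\tau_t=\Phi_t^*\left(\cL_{X_t}\tau_t+\partial\tau_t/\partial t\right)=\Phi_t^*\left(d(i_{X_t}\tau_t)+d\alpha_t\right)=0,
\]
so $\Phi_t^*\tau_t\equiv\tau_0$.

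The heart of the argument is $(\mathrm{i})\Rightarrow(\mathrm{ii})$, which is a relative de Rham computation. Set $\sigma_t=\partial\tau_t/\partial t\in\Omega^n(\cN)$; condition (i) says $\int_\cN\sigma_t=0$ for every $t$. For a compact, connected, orientable $n$-manifold with boundary, Poincar\'e--Lefschetz duality gives $H^n(\cN,\partial\cN;\RR)\cong H_0(\cN;\RR)\cong\RR$, and the isomorphism is induced by $[\beta]\mapsto\int_\cN\beta$. Hence $[\sigma_t]=0$ in $H^n(\cN,\partial\cN;\RR)$, so each $\sigma_t$ admits some primitive $\alpha_t\in\Omega^{n-1}(\cN)$ with $d\alpha_t=\sigma_t$ and $\alpha_t|_{\partial\cN}=0$. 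To produce $\alpha_t$ depending smoothly on $t$, I would fix once and for all a continuous linear right inverse $T\colon\ker\bigl(\int_\cN\colon\Omega^n(\cN)\to\RR\bigr)\to\Omega^{n-1}(\cN,\partial\cN)$ to $d$, for instance the Green operator associated with the Hodge Laplacian under relative boundary conditions (see e.g.\ the Hodge--Morrey decomposition), and set $\alpha_t=T(\sigma_t)$. Smoothness in $t$ then follows from continuity of $T$ and smoothness of $t\mapsto\sigma_t$.

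The main obstacle is the smooth parametric dependence in $(\mathrm{i})\Rightarrow(\mathrm{ii})$: pointwise existence of $\alpha_t$ is a purely topological statement, but the Moser argument needs $t\mapsto\alpha_t$ to be at least $C^1$. Choosing the right linear solver (Hodge/Green operator with the correct boundary conditions, or a hand-built chain homotopy via a tubular neighborhood of $\partial\cN$ and integration over fibers) is the technical price to pay, but once that solver is fixed, the rest of the argument is routine and the three implications close up.
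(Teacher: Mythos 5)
The paper does not prove this theorem: it is stated in the appendix as a quotation of Banyaga's 1974 result and is used as a black box, so there is no in-paper argument to compare yours against. Your implications $(\mathrm{iii})\Rightarrow(\mathrm{i})$ and $(\mathrm{ii})\Rightarrow(\mathrm{iii})$ are fine, and your observation that the real technical content in $(\mathrm{i})\Rightarrow(\mathrm{ii})$ is smooth dependence on $t$ (fixed by a continuous linear solver such as a relative Green operator) is a sound one.

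There is, however, a genuine gap in $(\mathrm{i})\Rightarrow(\mathrm{ii})$. The Poincar\'e--Lefschetz / relative de Rham argument produces a primitive $\alpha_t \in \Omega^{n-1}(\cN,\partial\cN)$, i.e.\ a form whose \emph{pullback} to $\partial\cN$ vanishes ($i^*\alpha_t = 0$). The theorem demands the strictly stronger condition $\alpha_t(x) = 0$ for every $x \in \partial\cN$ (vanishing of $\alpha_t$ as an element of $\Lambda^{n-1}T^*_x\cN$, which at a boundary point is $n$ scalar conditions, not $1$). The distinction is not cosmetic: if you only know $i^*\alpha_t = 0$, then the Moser vector field $X_t$ defined by $i_{X_t}\tau_t = -\alpha_t$ is merely \emph{tangent} to $\partial\cN$, not zero there, so the resulting isotopy $\Phi_t$ preserves the boundary but need not fix it pointwise --- exactly the conclusion you claim in $(\mathrm{ii})\Rightarrow(\mathrm{iii})$ and that the theorem asserts. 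The standard repair is a collar correction: in a collar $\partial\cN \times [0,\varepsilon)$ write $\alpha_t = a_t(r) + dr \wedge b_t(r)$ with $a_t, b_t$ forms on $\partial\cN$; then $i^*\alpha_t = 0$ gives $a_t(0) = 0$, and replacing $\alpha_t$ by $\alpha_t - d\bigl(\chi(r)\,r\,b_t(0)\bigr)$ (with $\chi$ a cutoff equal to $1$ near $r=0$) kills the normal part $b_t(0)$ as well, while leaving $d\alpha_t$ unchanged. Since this correction is linear in $\alpha_t$, it composes with your chosen solver $T$ and preserves smooth dependence on $t$. With that extra step, your argument closes.
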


By replacing the ordinary forms with twisted differential forms, every argument in \cite{Banyaga74} can be applicable to non-orientable manifolds.   
Therefore, we have the following version of Moser's theorem.
\begin{thm}\label{Thm:Moser}
Let $\cN$ be a compact, connected $n$-dimansional manifold with boundary $\partial \cN$ and $\omega_t$ a $1$-parameter family of volume density  forms. 
The following conditions are equivalent:
\begin{itemize}
    \item[(i)] $\int_\cN \omega_t=\int_\cN \omega_0$, for all $t$;
    \item[(ii)] There exists a $1$-parameter family $\alpha_t$  of $(n-1)$-forms of odd kind such that $\partial \omega_t /\partial t=d \alpha_t$ and $\alpha_t(x)=0$ for all $x\in \partial \cN$;
    \item[(iii)] There exists an isotopy $\Phi_t$ on $\cN$ such that 
    $$\Phi^*_t\omega_t=\omega_0, \Phi_0=id \text{ and }\Phi_t|\partial \cN=id.$$
\end{itemize}
\end{thm}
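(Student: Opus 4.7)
The plan is to reproduce Banyaga's argument \cite{Banyaga74} from the orientable case, replacing every occurrence of an ordinary differential form by its twisted counterpart. All the relevant operations — the exterior derivative $d$, the interior product $\iota_X$ with a vector field, the Lie derivative $\cL_X$, pullback by a diffeomorphism, Stokes' theorem, and Cartan's magic formula $\cL_X = d\iota_X + \iota_X d$ — extend verbatim to twisted forms, as recalled in \refsec{preliminaries} and Appendix \ref{Appendix}. I would proceed by the cyclic implications (iii) $\Rightarrow$ (i) $\Rightarrow$ (ii) $\Rightarrow$ (iii). The first is immediate from the change of variables formula for integrals of twisted top forms: $\int_\cN \omega_0 = \int_\cN \Phi_t^*\omega_t = \int_\cN \omega_t$, using that each $\Phi_t$ is a diffeomorphism.

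For (i) $\Rightarrow$ (ii), differentiating the equality $\int_\cN \omega_t = \int_\cN \omega_0$ in $t$ gives $\int_\cN \partial\omega_t/\partial t = 0$. The form $\partial\omega_t/\partial t$ is a closed twisted $n$-form, and twisted relative Poincar\'e--Lefschetz duality — the non-orientable analogue of \cite[Theorem~7.8]{BottTu82} combined with \refprop{relToCsupp} — identifies $H^n(\cN,\partial\cN;L)$ with $\RR$ via integration. Hence $\partial\omega_t/\partial t$ admits a twisted $(n{-}1)$-form primitive $\alpha_t$ which vanishes on $\partial\cN$. To make $\alpha_t$ depend smoothly on $t$, one uses a standard parametric homotopy operator built from a partition of unity subordinate to a finite cover of $\cN$ by Euclidean charts, together with the local Poincar\'e lemma with relative boundary condition. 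This construction of a parameter-dependent primitive is the only place where genuine care is required when transporting the orientable proof to the non-orientable setting, since one must track the orientation line bundle $L$ through every local trivialization; this is the main obstacle.

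For (ii) $\Rightarrow$ (iii), I would run Moser's trick. Since $\omega_t$ is everywhere non-vanishing, the equation $\iota_{X_t}\omega_t = -\alpha_t$ uniquely determines a smooth time-dependent vector field $X_t$ on $\cN$; because $\alpha_t$ vanishes on $\partial\cN$, so does $X_t$, and hence its flow $\Phi_t$ is defined on all of $[0,1]$, satisfies $\Phi_0 = \id$, and fixes $\partial\cN$ pointwise. Using Cartan's formula together with $d\omega_t = 0$,
\[
\frac{d}{dt}\Phi_t^*\omega_t \;=\; \Phi_t^*\!\left(\cL_{X_t}\omega_t + \frac{\partial\omega_t}{\partial t}\right) \;=\; \Phi_t^*\bigl(d\iota_{X_t}\omega_t + d\alpha_t\bigr) \;=\; 0,
\]
so $\Phi_t^*\omega_t \equiv \omega_0$, completing the cycle.
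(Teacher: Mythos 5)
Your proposal takes the same route as the paper, which simply notes that Banyaga's argument in the orientable case carries over verbatim once ordinary forms are replaced by twisted ones; you have unpacked the cyclic implications (iii) $\Rightarrow$ (i) $\Rightarrow$ (ii) $\Rightarrow$ (iii) that make up that argument, and each step is handled correctly. One detail to watch in (i) $\Rightarrow$ (ii): the relative-duality argument you invoke produces a primitive $\alpha_t$ whose \emph{pullback} $i^*\alpha_t$ to $\partial\cN$ vanishes, whereas both the statement of (ii) and your Moser-trick step require $\alpha_t$ to vanish pointwise as a linear form along $\partial\cN$ (so that the vector field $X_t$ defined by $\iota_{X_t}\omega_t = -\alpha_t$ is actually zero on the boundary, not merely tangent to it); Banyaga closes this gap with a collar modification, subtracting an exact term $d\gamma_t$ with $\gamma_t$ proportional to the collar coordinate so as to kill the normal component of $\alpha_t$ at $\partial\cN$, and this local construction likewise transfers unchanged to twisted forms.
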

Also, in \cite{Bruveris}, a version of Moser's theorem was shown for the manifolds with corners, possibly non-orientable, including the case of  \refthm{Moser}. See \cite[7~Theorem]{Bruveris}.

\subsection{Homotopy invariance of twisted de Rham cohomology}
In \cite{deRham84}, de Rham unified the concepts of singular chains and even/odd differential forms in terms of currents. 
Then, he study the homology groups of currents and showed the  the homotopy invariance for homology groups of currents.
See \cite[$\mathsection 18.$ Homology Groups]{deRham84}.
More directly, we can follows the proof of \cite[Corollary~4.1.2.]{BottTu82} with the twisted forms.
Indeed, by following the argument, we can also see that the homotopy invariance holds for relative homology groups $H^*(X,\partial X;L_X)$.

We rephrase the theorem for our purpose as follows:
\begin{prop}[Homotopy invariance of twisted de Rham cohomology]\label{Prop:homotopyInvariance}
    Let $X,Y$ be compact, connected, smooth manifolds, possibly non-orientable, and $F,G$ smooth maps from $X$ to $Y$. If there is a smooth homotopy $H:X\times [0,1]\to Y$ from $F$ to $G$ and $H$ is oriented, then 
    for all $i\geq 0$, the induced homomorphisms
    $F^*, G^*:H^i(Y;L_Y)\to H^i(X;L_X)$ coincide.
    The same statement holds for $ H^*(X,\partial X;L_X)$ and $H^*(Y,\partial Y;L_Y)$ and for $ H^*_c(\mathring{X};L_{\mathring{X}})$ and $H^*_c(\mathring{Y};L_{\mathring{Y}})$.
\end{prop}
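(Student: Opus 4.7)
The plan is to follow the classical integration-along-the-fiber proof of Bott--Tu (Corollary 4.1.2), taking care to track the orientation line bundles throughout. The strategy is to build an explicit cochain homotopy $K$ between $F^*$ and $G^*$, and then read off the equality on cohomology.

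First, I would construct $K \colon \Omega^{i}(Y; L_Y) \to \Omega^{i-1}(X; L_X)$. Since the interval $[0,1]$ is canonically oriented, the orientation bundle of the cylinder $X \times [0,1]$ is canonically isomorphic to $\pi_X^* L_X$, where $\pi_X \colon X \times [0,1] \to X$ is the projection. The given orientation $H^\flat$ of the homotopy $H$ provides a well-defined pullback
\[
H^* \colon \Omega^{i}(Y; L_Y) \to \Omega^{i}(X \times [0,1]; \pi_X^* L_X),
\]
as in \refsec{preliminaries}. Fiber integration over the canonically oriented compact fiber $[0,1]$ then yields
\[
\pi_{X*} \colon \Omega^{i}(X \times [0,1]; \pi_X^* L_X) \to \Omega^{i-1}(X; L_X),
\]
and I set $K = \pi_{X*} \circ H^*$. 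The operator makes sense in the twisted setting precisely because the coefficient bundle on the cylinder is pulled back from the base, so no ambiguity arises in integrating the $L$-valued piece along the fiber.

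Second, I would verify the homotopy identity $dK + Kd = G^* - F^*$. Locally, writing $H^* \mu = \alpha + dt \wedge \beta$ with $\alpha,\beta$ sections of $\bigwedge^\bullet T^*(X\times[0,1]) \otimes \pi_X^* L_X$ having no $dt$-component, the computation reduces to the standard one: the fiber integral of $d(H^*\mu)$ splits into a horizontal contribution canceling $K \circ d$ and two boundary contributions coming from $t=1$ and $t=0$, which by $H\circ i_1 = G$ and $H\circ i_0 = F$ equal $G^*\mu$ and $F^*\mu$ respectively. The orientation $H^\flat$ contributes the same global sign at both endpoints (since the canonical identification $L_{X\times[0,1]} \cong \pi_X^* L_X$ is $t$-independent), so the signs are exactly as in the orientable case. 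Passing to cohomology gives $F^* = G^*$.

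Third, I would extend to the relative and compactly supported variants. For $H^*(X, \partial X; L_X)$, the homotopy is assumed to be a homotopy of pairs, i.e., $H(\partial X \times [0,1]) \subset \partial Y$; then $K$ sends forms whose pullback to $\partial Y$ vanishes to forms whose pullback to $\partial X$ vanishes, because the pullback and fiber integration commute with restriction to the boundary. For $H^*_c(\mathring{X}; L_{\mathring{X}})$, one takes $F, G$ and the homotopy $H$ to be proper, so that $K$ preserves compact supports (the fiber $[0,1]$ is already compact and fiber integration does not spread supports horizontally). The same cochain homotopy identity then gives $F^* = G^*$ in each case. The main obstacle is purely bookkeeping---checking that the canonical isomorphism $L_{X\times[0,1]} \cong \pi_X^* L_X$ is compatible with $H^\flat$, fiber integration, and the exterior derivative---after which the argument is formally identical to the proof in \cite{BottTu82}.
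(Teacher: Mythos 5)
Your proposal matches the paper's approach: the paper simply defers to de Rham's treatment and to ``following the proof of \cite[Corollary~4.1.2]{BottTu82} with twisted forms,'' and your explicit construction of the chain homotopy $K=\pi_{X*}\circ H^*$ via the canonical identification $L_{X\times[0,1]}\cong\pi_X^*L_X$ is exactly the content of that citation, with the orientation bookkeeping and the relative/compactly supported variants handled as the paper intends.
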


\subsection{Relative twisted cohomologies}
We follow the formulation given in \cite{Godbillon71} to define a relative version of twisted cohomology.
First, we recall that formulation from \cite{Godbillon71}.

A \emph{closed $n$-dimensional submanifold} $N$ of a $m$-dimensional manifold $M$ with or without boundary is a closed subset of $M$ such that for any $x\in N$, there is an open neighborhood $U$ of $x$ in $M$ and a diffeomorphism $\varphi$ from $U$ to $\RR^m$ or to the upper-half space $\HH^m$ satisfying the followings:
\begin{enumerate}
    \item if $n<m-1$, then $\varphi(U)=\RR^m$ and $\varphi(U\cap N)=\RR^n$;
    \item if $n=m-1$, then $\varphi(U)$ is either $\RR^m$ or $\HH^m$, and 
    $\varphi(U\cap N)=\RR^{m-1}$.
    \item if $n=m$, then $\varphi(U)$ is  $\RR^m$ or $\HH^m$, and $\varphi(U\cap N)$ is  $\RR^m$ or $\HH^m$.
\end{enumerate}
Note that there exists a unique differentiable structure on such an $N$ such that the inclusion $i:N\hookrightarrow M$ is an embedding.

Then, we assume that each component $C$ of $N$ satisfies the following:
\begin{enumerate}
    \item if $n<m-1$, then $C$ is a closed manifold in the interior $\mathring{M}$;
    \item if $n=m-1$, then $C$ is a closed manifold in $\mathring{M}$ or in $\partial M$;
    \item if $n=m$, then $C$ is a manifold whose boundary components satisfy one of the properties listed above.
\end{enumerate}
We define $\Omega^p(M,N)$ (respectively, $\Omega_c^p(M,N)$) as the space of twisted $p$-forms $\alpha\in \Omega^p(M)$ (respectively, $\mu\in \Omega^p_c(M)$) such that $i^*\mu=0$.
Then, the cohomology $H^*(M,N)$ (respectively, $H^*_c(M,N)$) of the cochain complex $(\Omega^\bullet (M,N),d)$ (respectively, $(\Omega_c^\bullet (M,N),d)$) is well-defined. 

In the similar way, we can define $\Omega^p(M,N;L_M)$ (respectively, $\Omega_c^p(M,N;L_M)$) as the space of $p$-forms $\mu \in \Omega^p(M;L_M)$ (respectively, $\mu \in \Omega^p_c(M;L_M)$) such that $i^*\mu=0$.
Also, the cohomology $H^*(M,N;L_M)$ (respectively, $H^*_c(M,N;L_M)$) of the cochain complex $(\Omega^\bullet (M,N;L_M),d)$ (respectively, $(\Omega_c^\bullet (M,N;L_M),d)$) is well-defined.
We can deduce  the cohomology theories of $H^*(M,N;L_M)$ and  of $H^*_c(M,N;L_M)$ from  the cohomology theories of $H^*(M,N)$ and  of $H^*_c(M,N)$ by tensoring  the coefficient $L_M$.

We remark the following equality:
\begin{prop}\label{Prop:relToCsupp}
    Let $F$ be a compact, connected surface with boundary. Then, 
    \[
    H^i(F,\partial F; L_F)\cong H^i_c(\mathring{F};L_{\mathring{F}})\] 
    for all non-negative integers $i$.
\end{prop}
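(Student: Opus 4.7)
The plan is to construct an explicit cochain-level quasi-isomorphism between the two complexes via extension by zero. A compactly supported twisted form $\alpha \in \Omega^p_c(\mathring{F}; L_{\mathring{F}})$ has support in a compact subset of the open manifold $\mathring{F}$, and therefore extends smoothly by zero to a twisted form $e(\alpha) \in \Omega^p(F; L_F)$ which vanishes on an open neighborhood of $\partial F$. In particular, $i^* e(\alpha) = 0$ for the boundary inclusion $i \colon \partial F \hookrightarrow F$, so $e$ is a well-defined cochain map
\[
e \colon \Omega^\bullet_c(\mathring{F}; L_{\mathring{F}}) \to \Omega^\bullet(F, \partial F; L_F),
\]
and it suffices to show $e$ induces an isomorphism on cohomology.

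The key technical tool is a collar $U \cong \partial F \times [0, 1)$ of $\partial F$ in $F$, with coordinate $t$. Since $U$ deformation retracts to the (orientable) manifold $\partial F$, the bundle $L_F|_U$ is trivial, so after fixing a local section the twisted-form calculation on $U$ reduces to one for ordinary forms. For surjectivity on cohomology, let $\omega \in \Omega^p(F, \partial F; L_F)$ be closed. On $U$ write $\omega = \omega_1 + dt \wedge \omega_2$, where neither $\omega_i$ contains $dt$. The conditions $i^* \omega = 0$ and $d\omega = 0$ give $\omega_1|_{t=0} = 0$ and $\partial_t \omega_1 = d_x \omega_2$, hence
\[
\omega_1(x, t) = d_x \int_0^t \omega_2(x, s)\, ds.
\]
Choose a bump $\chi \colon [0,1) \to [0,1]$ with $\chi \equiv 1$ on $[0, 1/3]$ and $\chi \equiv 0$ on $[2/3, 1)$, and set $\eta = \chi(t) \int_0^t \omega_2(x, s)\, ds$, extended by zero outside $U$. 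Then $\eta \in \Omega^{p-1}(F, \partial F; L_F)$ because $\eta|_{t=0} = 0$, and a direct computation shows $\omega - d\eta \equiv 0$ on $\partial F \times [0, 1/3)$; hence $[\omega]$ lies in the image of $e_*$.

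For injectivity, suppose $\alpha \in \Omega^p_c(\mathring{F}; L_{\mathring{F}})$ and $e(\alpha) = d\eta$ for some $\eta \in \Omega^{p-1}(F, \partial F; L_F)$. Since $e(\alpha)$ vanishes on a neighborhood of $\partial F$, the form $\eta$ is closed on a (possibly smaller) collar and still satisfies $i^*\eta = 0$. Applying the same collar construction to $\eta$ yields $\zeta \in \Omega^{p-2}(F, \partial F; L_F)$ such that $\eta - d\zeta$ vanishes on a neighborhood of $\partial F$; thus $\eta - d\zeta \in \Omega^{p-1}_c(\mathring{F}; L_{\mathring{F}})$ and $d(\eta - d\zeta) = e(\alpha)$, so $\alpha$ is exact in the compactly supported complex.

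The main obstacle is the bookkeeping for the cutoff in both directions: the bump $\chi$ introduces ``junk'' terms involving $\chi'$ in $d\eta$ that must be shown to cancel against the part of $\omega$ they are meant to cut off, using the explicit primitive expression for $\omega_1$. This cancellation has to be verified component-by-component on the collar, and one must check that the resulting modification respects the vanishing condition at $t=0$ (which follows from the $t=0$ values of the primitives being zero). The twisted-form setting adds no essential difficulty once a local constant section of $L_F|_U$ is chosen, since the collar is orientable; the computation is then formally identical to the ordinary de Rham version.
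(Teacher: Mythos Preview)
Your argument is correct, but it takes a different route from the paper's. The paper proceeds categorically: it writes $H^i_c(\mathring{F};L_{\mathring{F}})$ as the direct limit $\varinjlim H^i(F,C_t;L_F)$ over shrinking collar neighborhoods $C_t=\partial F\times[0,1-t)$, observes that all terms of the directed system are isomorphic, and then uses the long exact sequence of the triple $(F,C_t,\partial F)$ together with the vanishing of $H^*(C_t,\partial F;L_{C_t})$ to identify $H^i(F,C_t;L_F)\cong H^i(F,\partial F;L_F)$. Your approach instead builds an explicit cochain map (extension by zero) and verifies it is a quasi-isomorphism by a direct collar computation, which is more elementary and avoids setting up the long exact sequence of a triple in the twisted setting. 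One small clarification: your ``junk terms'' discussion is slightly misleading---you do not need any cancellation involving $\chi'$, since on the region $\{t<1/3\}$ where $\chi\equiv 1$ you have $d\eta=\omega$ exactly, and that is all that is required for $\omega-d\eta$ to have compact support in $\mathring{F}$. Also note that the injectivity step as written needs $p\ge 2$ to produce $\zeta\in\Omega^{p-2}$; for $p\le 1$ the argument is immediate (a twisted $0$-form vanishing on $\partial F$ and closed near it already vanishes near $\partial F$), so this is only a minor omission.
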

\begin{proof} 
We have the parameterization of the collar neighborhood of $\partial F$, $i:\partial F \to [0,1)$ such that $i(\partial F\times \{0\})=\partial F$. 
Set $C_t= i(\partial F\times [0,1-t))$ for each $t\in [0,1]$.
Following the computation of \cite[Example~3.34]{Hatcher02}, 
we can see that 
\[
H^i_c(\mathring{F};L_{\mathring{F}})=\varinjlim H^i(F,C_t;L_F)
\]
since any compact subset of $\mathring{F}$ is contained in $F\setminus C_t$ for some $t\in [0,1]$.
Note that $H^i(F,C_s;L_F)\cong H^i(F,C_t;L_F)$ for any $s\neq t \in [0,1]$ since $C_s$ and $C_t$ are  collar neighborhoods of $\partial F$.
Hence, it suffices to show that  $H^i(F,C_t;L_F)\cong H^i(F,\partial F;L_F)$. 
This follows from the long exact sequence for the triple $(F,C_t, \partial F)$ (modifying \cite[5.1~Théorème, XII]{Godbillon71}):
\begin{align*}
\cdots \to H^{i-1}(C_t, \partial F;L_{C_t})\xrightarrow{\delta} H^i(F, C_t;L_{F})\xrightarrow[]{j^*}  H^i(F, \partial F;L_F)\xrightarrow[]{\iota^*} H^i(C_t, \partial F;L_{C_t})\to \cdots 
\end{align*}
since $H^{i}(C_t, \partial F;L_{C_t})=H^{i}(C_t, \partial F)=0$ for all non-negative integers $i$. 
Here, \[j:(F,\partial F)\to (F,C_t) \text{ and } \iota:(C_t, \partial F)\to (F,\partial F)\] are the inclusions, and $\delta$ is a connecting homomorphism such that  for $[\mu]\in H^i(C_t,\partial F;L_{C_t})$ with $d\mu=\nu\in \Omega^{i+1}(F,C_t;L_{F})$, $\delta([\mu])=[\iota^*\nu]$.
\end{proof}

\section*{Acknowledgements}
We would like to thank Takashi Tsuboi, Sangjin Lee, Hongtaek Jung, Mitsuaki Kimura and Erika Kuno for helpful conversations and comments.
In particular, we would like to thank Takashi Tsuboi for sharing the motivation behind his work (\cite{Tsuboi00}) in the conference  \href{https://conferences.cirm-math.fr/3082.html}{``Foliations and Diffeomorphism Groups"} (CIRM, 2024).
The first author was supported by the National Research Foundation of Korea Grant funded by the Korean Government (RS-2022-NR072395). 
The second author is partially supported by JSPS KAKENHI Grant Number JP23KJ1938 and JP23K12971.

\bibliographystyle{alpha}
\bibliography{biblio}

\begin{thebibliography}{BMPR18}

\bibitem[Ban74]{Banyaga74}
Augustin Banyaga.
\newblock Formes-volume sur les vari\'et\'es \`a{} bord.
\newblock {\em Enseign. Math. (2)}, 20:127--131, 1974.

\bibitem[Ban78]{Banyaga78}
Augustin Banyaga.
\newblock Sur la structure du groupe des diff\'{e}omorphismes qui pr\'{e}servent une forme symplectique.
\newblock {\em Comment. Math. Helv.}, 53(2):174--227, 1978.

\bibitem[Ban97]{Banyaga97}
Augustin Banyaga.
\newblock {\em The structure of classical diffeomorphism groups}, volume 400 of {\em Mathematics and its Applications}.
\newblock Kluwer Academic Publishers Group, Dordrecht, 1997.

\bibitem[BMPR18]{Bruveris}
Martins Bruveris, Peter~W. Michor, Adam Parusi\'nski, and Armin Rainer.
\newblock Moser's theorem on manifolds with corners.
\newblock {\em Proc. Amer. Math. Soc.}, 146(11):4889--4897, 2018.

\bibitem[Bow11]{bowden11}
Jonathan Bowden.
\newblock Flat structures on surface bundles.
\newblock {\em Algebr. Geom. Topol.}, 11(4):2207--2235, 2011.

\bibitem[BT82]{BottTu82}
Raoul Bott and Loring~W. Tu.
\newblock {\em Differential forms in algebraic topology}, volume~82 of {\em Graduate Texts in Mathematics}.
\newblock Springer-Verlag, New York-Berlin, 1982.

\bibitem[CM23]{ChenMann23}
Lei Chen and Kathryn Mann.
\newblock Structure theorems for actions of homeomorphism groups.
\newblock {\em Duke Math. J.}, 172(5):915--962, 2023.

\bibitem[dR84]{deRham84}
Georges de~Rham.
\newblock {\em Differentiable manifolds}, volume 266 of {\em Grundlehren der mathematischen Wissenschaften [Fundamental Principles of Mathematical Sciences]}.
\newblock Springer-Verlag, Berlin, 1984.
\newblock Forms, currents, harmonic forms, Translated from the French by F. R. Smith, With an introduction by S. S. Chern.

\bibitem[EPP12]{EntovPolterovichPy}
Michael Entov, Leonid Polterovich, and Pierre Py.
\newblock On continuity of quasimorphisms for symplectic maps.
\newblock In {\em Perspectives in analysis, geometry, and topology}, volume 296 of {\em Progr. Math.}, pages 169--197. Birkh\"auser/Springer, New York, 2012.
\newblock With an appendix by Michael Khanevsky.

\bibitem[Ghy91]{Ghys91}
\'{E}tienne Ghys.
\newblock Prolongements des diff\'{e}omorphismes de la sph\`ere.
\newblock {\em Enseign. Math. (2)}, 37(1-2):45--59, 1991.

\bibitem[God71]{Godbillon71}
Claude Godbillon.
\newblock {\em \'El\'ements de topologie alg\'ebrique}.
\newblock Hermann, Paris, 1971.

\bibitem[Hat02]{Hatcher02}
Allen Hatcher.
\newblock {\em Algebraic topology}.
\newblock Cambridge University Press, Cambridge, 2002.

\bibitem[KM25]{KimMaruyama25}
KyeongRo Kim and Shuhei Maruyama.
\newblock Quasimorphisms on the group of density preserving diffeomorphisms of the m\"{o}bius band, 2025.

\bibitem[Mat86]{MR848896}
Shigenori Matsumoto.
\newblock Numerical invariants for semiconjugacy of homeomorphisms of the circle.
\newblock {\em Proc. Amer. Math. Soc.}, 98(1):163--168, 1986.

\bibitem[MN20]{MannNariman20}
Kathryn Mann and Sam Nariman.
\newblock Dynamical and cohomological obstructions to extending group actions.
\newblock {\em Math. Ann.}, 377(3-4):1313--1338, 2020.

\bibitem[Mos65]{Moser65}
J\"urgen Moser.
\newblock On the volume elements on a manifold.
\newblock {\em Trans. Amer. Math. Soc.}, 120:286--294, 1965.

\bibitem[NSW08]{MR2392026}
J\"{u}rgen Neukirch, Alexander Schmidt, and Kay Wingberg.
\newblock {\em Cohomology of number fields}, volume 323 of {\em Grundlehren der mathematischen Wissenschaften [Fundamental Principles of Mathematical Sciences]}.
\newblock Springer-Verlag, Berlin, second edition, 2008.

\bibitem[Ser24]{Serraille}
Baptiste Serraille.
\newblock The sharp $c^0$-fragmentation property for hamiltonian diffeomorphisms and homeomorphisms on surfaces, 2024.

\bibitem[Tsu00]{Tsuboi00}
Takashi Tsuboi.
\newblock The {C}alabi invariant and the {E}uler class.
\newblock {\em Trans. Amer. Math. Soc.}, 352(2):515--524, 2000.

\end{thebibliography}

\end{document}